\documentclass[11pt]{article}

\usepackage[margin=1in]{geometry}
\usepackage[T1]{fontenc}
\usepackage[utf8]{inputenc}
\usepackage{lmodern}
\usepackage{microtype}
\usepackage{amsmath,amssymb,amsthm,mathtools}
\usepackage{booktabs}
\usepackage{graphicx}
\usepackage{enumitem}
\usepackage[hidelinks]{hyperref}
\hypersetup{
  pdftitle={Symmetry reductions and recurrence degrees for banded Toeplitz determinants and permanents},
  pdfauthor={Max A. Alekseyev and Dmitry I. Khomovsky},
  pdfsubject={Symmetry reductions for recurrences of banded Toeplitz determinants and permanents},
  pdfkeywords={Toeplitz matrix, determinant, permanent, linear recurrence, Polya signing, circulant matrix}
}
\usepackage{doi}
\usepackage{authblk}
\usepackage[backend=biber,citestyle=numeric-comp,bibstyle=ieee,sorting=none,minbibnames=5,maxbibnames=8,giveninits=true]{biblatex}
\newtheorem{theorem}{Theorem}[section]
\newtheorem{proposition}[theorem]{Proposition}
\newtheorem{lemma}[theorem]{Lemma}
\newtheorem{corollary}[theorem]{Corollary}
\newtheorem{example}[theorem]{Example}
\theoremstyle{remark}
\newtheorem{remark}[theorem]{Remark}

\DeclareMathOperator{\perm}{perm}

\newcommand{\eps}{\varepsilon}
\newcommand{\defeq}{:=}

\title{Symmetry reductions and recurrence degrees for banded Toeplitz determinants and permanents}
\author[1]{Max A. Alekseyev}
\author[2]{Dmitry I. Khomovsky}
\affil[1]{\small The George Washington University, Washington, DC, USA. Email: \href{mailto:maxal@gwu.edu}{maxal@gwu.edu}}
\affil[2]{\small Email: \href{mailto:khomovskij@physics.msu.ru}{khomovskij@physics.msu.ru}}
\date{}

\begin{document}
\maketitle

\begin{abstract}
This paper studies symmetry-induced reductions of scalar recurrence complexity for
balanced banded Toeplitz determinants and permanents.  Three mechanisms
emerge: determinant state and spectral compression, exceptional
permanent--determinant conversion, and symmetries of permanent state spaces.

For symmetric determinants, straightening reduces the row-column states from
$\binom{2m}{m}$ to the Catalan number $C_{m+1}$; primitive symplectic weight
compression then leaves $(3^m+1)/2$ distinct autonomous modes.  Skew symmetry
has a parallel compound/Hodge explanation: the middle exterior representation
splits into two Hodge halves of dimension $\binom{2m}{m}/2$, the one-step
compound transfer exchanges the halves, and its two-step restriction has
$3^{m-1}$ generic ternary modes.  Thus the full skew bound is
$2\cdot3^{m-1}$.  Widom--Hankel arguments
prove generic scalar minimality in both symmetry classes, and every even skew
Toeplitz determinant admits an explicit half-size square factorization.

For the zero-diagonal pentadiagonal support, a P\'olya--Kasteleyn signing
converts every permanent to a determinant.  Among consecutive zero-diagonal
two-sided bands, universal entrywise conversion---and separately Toeplitz
diagonal-wise conversion---occurs only in the Hessenberg families and this
pentadiagonal case.  Paired renewal identities recover all restored-diagonal
determinant and permanent layers.

For permanents, transposition gives the open symmetric bound
$(\binom{2m}{m}+2^m)/2$, while cyclic defect-sector pairing gives
$(4^m+\binom{2m}{m})/2$.  Skew-symmetry forces odd-size vanishing and
corresponding even-subsequence bounds.  In semibandwidth two the open
symmetric bound is generically sharp; higher-semibandwidth permanent
minimality is left separate from the symmetry reductions proved here.
\end{abstract}

\medskip
\noindent\textbf{Keywords:} banded Toeplitz matrix; determinant; permanent;
linear recurrence; P\'olya signing; circulant matrix

\smallskip
\noindent\textbf{2020 Mathematics Subject Classification:} Primary 15B05;
Secondary 15A15, 05A15.

\section{Introduction}

Let
\[
 A_n=(a_{j-i})_{i,j=1}^n,
 \qquad
 a(z)=\sum_{s=-m}^{m}a_s z^s,
\]
be the leading $n\times n$ section of a balanced banded Toeplitz matrix with lower and upper semibandwidth $m$.  We write
\[
 D_n\defeq\det A_n,\qquad P_n\defeq\perm A_n,
\]
and reserve \emph{bandwidth} for the total number $2m+1$ of potentially nonzero diagonals.
The present paper is the symmetry-focused follow-up to
\cite{AlekseyevKhomovskyRecurrences2026}.  The companion paper develops the
two constructive Laplace-expansion mechanisms, identifies the exact
row-column state space, proves the unrestricted determinant degree
$\binom{2m}{m}$ to be generically minimal, and gives the corresponding
permanent upper bound.  Here we keep that general machinery in the
background and ask what further structure becomes available after balance,
reflection, or sign constraints are imposed.

A point important for what follows is that the determinant reductions below
are not visible from the \emph{naive} row-column state count.  Under the
symmetric specialization the unreduced transfer still has $\binom{2m}{m}$
boundary states, and quotienting only by transposition would give the orbit
count
\[
 \frac12\left(\binom{2m}{m}+2^m\right),
\]
which is exactly the state-space bound that appears later for symmetric
permanents.  Determinant signs impose additional straightening relations among
symmetric boundary minors, reducing the row-column construction to the Catalan
number $C_{m+1}$.  We then identify this Catalan state space with the primitive
middle exterior module for the reciprocal companion transfer.  Its weight
multiplicities are Catalan numbers, but a fixed autonomous transfer acts by one scalar
on each weight; the number of admissible weights is exactly $(3^m+1)/2$.
Thus the complete upper-bound reduction
\[
 \binom{2m}{m}\longrightarrow C_{m+1}
 \longrightarrow\frac{3^m+1}{2}
\]
has a transfer-theoretic explanation as well as the direct Widom root-product
explanation.  The latter remains the more economical route to the modes and,
together with the Hankel argument below, proves generic minimality of the
principal determinant sequence.  For permanents, by contrast, the reductions
proved below are visible directly as quotients of the finite boundary-state
space.

Three mechanisms should be distinguished.  The first is \emph{determinant state and mode compression}.  Widom's
characteristic modes are fixed-cardinality products of the roots of
$z^ma(z)$.  Symmetry or skew-symmetry pairs those roots reciprocally, so many
products coalesce.  In the symmetric case the same phenomenon can also be
seen constructively: Laplace signs straighten boundary minors to a Catalan
state space, and its primitive symplectic weight decomposition then collapses
the autonomous transfer to $(3^m+1)/2$ distinct eigenvalues.  Skew symmetry
admits a parallel compound/Hodge reduction.  In the middle exterior
realization, the transfer space splits into two Hodge halves of dimension
$\binom{2m}{m}/2$; the one-step compound transfer exchanges the halves, while
its square has $3^{m-1}$ generic ternary modes.  Fixed-ambient skew boundary
minors obey the expected transpose-sign identities, but the normalized
row-column vector uses different ambient section sizes across levels, so those
identities do not define a levelwise half-dimensional quotient of the
one-step row-column transfer.  Hence the full one-step bound is
$2\cdot3^{m-1}$.  Widom gives the same mode counts directly, and a
Hankel--Vandermonde argument shows that they are generically exact for the
scalar determinant sequences.  The skew case also carries the classical
Pfaffian square, which for Toeplitz sections can be sharpened to a half-size
Toeplitz-minus-Hankel factorization.
Related reciprocal-root, symmetric-polynomial, and low-bandwidth Chebyshev
structures occur in the Toeplitz literature, including
\cite{Zakr,Alexandersson2012,AlexanderssonEtAl2021,Elouafi2011,Elouafi2018}.

The second mechanism is \emph{exact conversion}.  P\'olya's signing problem
and its planar/Pfaffian descendants are classical
\cite{Polya1913,Kasteleyn1967,Kuperberg1998,RobertsonSeymourThomas1999}.  In
the zero-diagonal pentadiagonal band we obtain an entrywise signing that
converts every permanent to a determinant, even without Toeplitz constancy;
on the Toeplitz locus it is gauge-equivalent to a fourth-root-of-unity
diagonal substitution.  Paired fixed-point renewal identities reconstruct
the arbitrary-main-diagonal permanent and determinant from their zero-diagonal
sequences, with a single sign distinguishing the unique three-vertex
configuration in which a fixed point is crossed by a transposition.  We then
classify consecutive zero-diagonal
two-sided bands at two different levels.
For the full coordinate subspace, even arbitrary nonzero entrywise Schur
multipliers work only in the Hessenberg cases and the pentadiagonal case; a
six-matching obstruction already rules out the $(3,2)$ support at order six.
On the Toeplitz locus, imposing instead that the multipliers be constant along
diagonals gives exactly the same list.  This separates a genuine low-width
identity between determinant and permanent from the state-space reductions
that persist at arbitrary semibandwidth, and it complements earlier results
on Hessenberg-type matrices and convertible subspaces
\cite{DaFonseca2011,DaCruzEtAl2017,HwangKimSong1996}.

The third mechanism is \emph{state-space symmetry} for permanents.  No Widom
root-product formula is required: transposition exchanges the two subset
labels of the balanced row-column states.  For cyclic closure, a column shift
collects all wrap-around entries into a fixed corner defect.  Resolving the
defect splits the permanent into contributions whose clean central bands have
semibandwidths $(2m-r,r)$, and transposition pairs the sectors $r$ and
$2m-r$.  The resulting state counts give the open and cyclic bounds
\[
 \frac12\left(\binom{2m}{m}+2^m\right),
 \qquad
 \frac12\left(4^m+\binom{2m}{m}\right).
\]
These are unconditional annihilator bounds for every $m$.  The open
pentadiagonal case can be settled symbolically, including its exceptional
degree-drop locus; generic minimality in arbitrary semibandwidth is not
needed for, and is not pursued as part of, the symmetry reductions proved
here.

The paper is organized as follows.  Section~\ref{sec:framework} recalls only
the pieces of the preceding recurrence paper needed here.
Section~\ref{sec:det-symmetry} treats determinant reciprocity, symmetry
reductions, skew Toeplitz factorization, and generic sharpness.
Section~\ref{sec:conversion} isolates the distinct low-semibandwidth
permanent--determinant conversion mechanism.  Section~\ref{sec:permanent-symmetry}
develops the open permanent symmetry quotients.  Section~\ref{sec:cyclic-symmetry}
first gives a circulant-completion counterpart to the classical fixed-size
reductions for banded Toeplitz determinants and shows how it recovers the
Widom expansion, then treats cyclic determinant symmetry and the cyclic
permanent bounds.

\section{Framework recalled from the companion recurrence paper}\label{sec:framework}

We recall the minimum notation needed from \cite{AlekseyevKhomovskyRecurrences2026}.  Put
\[
 q(z)=z^ma(z)=a_m\prod_{j=1}^{2m}(z-z_j).
\]
When the roots are simple, Widom's formula gives
\begin{equation}\label{eq:widomform}
 D_n=\sum_{|J|=m}C_JW_J^n,
 \qquad
 W_J=(-1)^m a_m\prod_{j\in J}z_j,
\end{equation}
and hence the degree-$d$ annihilator
\begin{equation}\label{eq:widompoly}
 \chi(t)=\prod_{|J|=m}(t-W_J),
 \qquad d=\binom{2m}{m}.
\end{equation}
The coefficients extend polynomially through root collisions.  As in the companion paper, ``generic minimal degree'' means the minimal homogeneous constant-coefficient recurrence degree on a nonempty Zariski-open subset of the relevant genuine-band parameter space.

For permanents, let $Q_m^+$ denote the signless row-column transfer of the companion paper.  Its normalized states at exchange level $j$ are indexed by pairs
\[
 A,B\in\binom{[m]}j,\qquad 0\le j\le m.
\]
Writing $\ell=j+1$, $B=\{b_1<\cdots<b_j\}$, and
\[
 \widehat A=\begin{cases}
 A,&1\notin B,\\
 (A\setminus\{1\})\cup\bigl(\{m+1\}\text{ if }1\in A\bigr),&1\in B,
 \end{cases}
\]
with $\widehat A=\{\widehat a_1<\cdots<\widehat a_j\}$, define
\begin{equation}\label{eq:sigmaAB}
 \Sigma(A,B)=
 \left(
  (\ell,\ell-b_1,\ldots,\ell-b_j),
  (\ell,\ell-\widehat a_1,\ldots,\ell-\widehat a_j)
 \right).
\end{equation}
For a normalized signature of this form, with increasing rearrangements $x_1<\cdots<x_j$ and $y_1<\cdots<y_j$ of the nonleading row and column entries, the inverse labels are
\begin{equation}\label{eq:AB-from-signature}
 B_\sigma=\{\ell-x_t:1\le t\le j\},\qquad
 A_\sigma=\{\langle\ell-y_t\rangle_m:1\le t\le j\},
\end{equation}
where $\langle r\rangle_m$ denotes the representative in $[m]$ modulo $m$.

\begin{proposition}[Balanced row-column state space, recalled from \cite{AlekseyevKhomovskyRecurrences2026}]\label{prop:state-recall}
The balanced determinant and permanent row-column transfers reach exactly the states $\Sigma(A,B)$ above.  Their level-$j$ state count is $\binom mj^2$, and the total transfer dimension is
\begin{equation}\label{eq:statecount-recalled}
 \sum_{j=0}^m\binom mj^2=\binom{2m}{m}.
\end{equation}
In particular, the unrestricted balanced permanent sequence has a homogeneous constant-coefficient annihilator of degree at most $\binom{2m}{m}$.
\end{proposition}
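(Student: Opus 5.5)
This proposition is recalled from the companion paper, so the plan is to reconstruct the argument from its ingredients rather than derive anything new. The approach is a Laplace expansion of $\det A_n$ and $\perm A_n$ along the last row and column, organised as a transfer on \emph{boundary minors}. After peeling off the bottom-right corner, any surviving term is a minor of $A_{n-1}$ with some rows and columns deleted. Because the band has semibandwidth $m$, every deleted row lies among the last $m$ rows and every deleted column among the last $m$ columns.

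The first step is to parametrise these boundary minors. A state records which of the last $m$ rows and which of the last $m$ columns have been removed. Balance (lower and upper semibandwidth both $m$) forces equal numbers of deleted rows and columns, say $j$ of each. I would normalise the signature relative to the current corner $\ell=j+1$, as in \eqref{eq:sigmaAB}, so that it is independent of $n$. This gives the row offsets $\ell-b_t$ and the column offsets $\ell-\widehat a_t$. The wrap-around rule defining $\widehat A$ comes from the first position being shifted out when $1\in B$.

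The second step is to show the reachable set is exactly $\{\Sigma(A,B)\}$, which has two halves.
\begin{itemize}
\item \emph{Closure.} I would check that one expansion step sends $\Sigma(A,B)$ to a combination of states $\Sigma(A',B')$. The shifts are $B'=(B+1)\cap[m]$ with possible insertion of $1$, and similarly for $A$ modulo $m$. The coefficients are the band entries $a_s$, with signs for the determinant and without signs for the permanent.
\item \emph{Reachability.} Starting from the empty state, I would exhibit expansion paths to every pair $(A,B)$, using generic nonzero $a_s$ so that no state is forced to vanish.
\end{itemize}
The inverse formulas \eqref{eq:AB-from-signature} show that $\Sigma$ is injective. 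Hence at level $j$ the count is $\binom mj\cdot\binom mj$.

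Summing over levels and applying Vandermonde's identity gives $\sum_j\binom mj\binom m{m-j}=\binom{2m}m$, which is \eqref{eq:statecount-recalled}.

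For the final claim, the vector $v_n$ of all normalised boundary minors satisfies $v_{n+1}=Q_m^+v_n$ with a fixed matrix of size $\binom{2m}m$. The sequence $P_n$ is a fixed coordinate of $v_n$. By Cayley--Hamilton, the characteristic polynomial of $Q_m^+$ then annihilates $P_n$, giving degree at most $\binom{2m}m$.

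I expect the main obstacle to be the closure and exactness bookkeeping. This means verifying that the $\widehat A$ wrap-around correctly tracks column labels when the leading index leaves the window, and that no spurious states appear. I would first check it by hand for $m=1,2$, where there are $2$ and $6$ states, and then argue the general shift rule by induction on the level $j$.
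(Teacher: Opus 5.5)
The paper does not reprove this statement; its proof is a one-line appeal to the signature classification and state-count theorem of the companion paper. The steps of your outline that come after that classification are correct, and they are exactly how the statement is used here. The inverse labels \eqref{eq:AB-from-signature} make $\Sigma$ injective, so exactness of the reachable set gives $\binom mj^2$ states at level $j$. Vandermonde then gives $\binom{2m}{m}$, and Cayley--Hamilton for the transfer, read off on the principal coordinate, gives the annihilator. One small misattribution: equal numbers of deleted rows and columns is automatic for square minors. What balance buys is that both deficit sets range over $[m]$, so the count is $\binom mj^2$ rather than $\binom{m_1}{j}\binom{m_2}{j}$.

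The gap is in the part you supply in place of the citation, which is the real content of ``reach exactly the states $\Sigma(A,B)$''.

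\emph{Your state picture is not the $\Sigma$-encoding.} You place all deficits among the last $m$ rows and last $m$ columns of one section. But by \eqref{eq:sigmaAB}, whenever $1\in A\cap B$ the column signature contains $\ell-(m+1)$, one step beyond the range $[\ell-m,\ell-1]$ of the nonleading row entries. As the proof of Lemma~\ref{lem:signature-transpose} indicates, $\widehat A$ records the normalization that strips a leading deficit common to both sides and translates the rest. This is why the normalized vector mixes ambient sections (Section~\ref{subsec:skew-transfer-compression}). The $\langle\cdot\rangle_m$ in \eqref{eq:AB-from-signature} only undoes the relabelling $1\mapsto m+1$, so ``similarly for $A$ modulo $m$'' is not a transition rule.

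\emph{Your shift rule is not the expansion step.} A Laplace step along a boundary row or column has up to $m+1$ terms $a_s$. Each creates or consumes a deficit at the position fixed by $s$, anywhere in the window. The rule $B'=(B+1)\cap[m]$ with optional insertion of $1$ allows only two outcomes per side and never touches an interior deficit.

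So closure has not been verified. Reachability also needs explicit transfer paths, above all to the states with $1\in A\cap B$; genericity of the $a_s$ plays no role there. Either carry out this bookkeeping in the normalized encoding, or cite the companion theorem as the paper does.

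If you only want the final annihilator bound, there is a self-contained route that bypasses the encoding. For $|\pi(i)-i|\le m$, the rows and columns of $[N]$ not matched inside $[N]\times[N]$ are equal-size subsets $R,C\subseteq[N-m+1,N]$. The corresponding weighted sums are $\perm A_N[[N]\setminus R,[N]\setminus C]$. These evolve under one fixed transfer on $\sum_j\binom mj^2=\binom{2m}{m}$ states, and $P_N$ is the $(\emptyset,\emptyset)$ coordinate.
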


\begin{proof}
This is the balanced specialization of the exact signature classification and state-count theorem in \cite{AlekseyevKhomovskyRecurrences2026}.  The formulas \eqref{eq:sigmaAB} and \eqref{eq:AB-from-signature} are recalled because the involutions below act transparently on these labels.
\end{proof}

For later specializations we also use the Lucas-type polynomials
\[
 U_0(P,Q)=0,\qquad U_1(P,Q)=1,\qquad
 U_{r+2}(P,Q)=P\,U_{r+1}(P,Q)-Q\,U_r(P,Q).
\]
Finally, $T_n(c_{-\ell},\ldots,c_u)$ denotes the $n\times n$ Toeplitz matrix whose diagonal of offset $s=j-i$ has constant entry $c_s$.

\section{Determinant symmetry and generic sharpness}\label{sec:det-symmetry}

We begin with the determinant side, where the root-product description makes the effect of balanced symmetry completely explicit.  Let $m_1=m_2=m$, put
\[
 d=\binom{2m}{m},
 \qquad
 \varpi=a_{-m}a_m,
\]
and let $\chi(t)=t^d-r_1t^{d-1}-\cdots-r_d$ be the degree-$d$ Widom
annihilator recalled in \eqref{eq:widompoly}.

\subsection{Reciprocity}

\begin{theorem}[Reciprocal full recurrence]\label{thm:reciprocal}
For a balanced $(m,m)$-banded Toeplitz determinant sequence, the coefficients
of the degree-$d$ Widom annihilator satisfy
\begin{equation}\label{eq:recipcoeff}
 r_i=\varpi^{\,i-d/2}r_{d-i},
 \qquad d/2\le i\le d,
\end{equation}
where $r_0\defeq-1$.  In particular,
\[
 r_d=-\varpi^{d/2}.
\]
\end{theorem}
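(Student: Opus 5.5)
The plan is to prove the coefficient symmetry directly from the root-product description \eqref{eq:widompoly}, by showing that the multiset of Widom modes $\{W_J\}$ is stable under $W\mapsto\varpi/W$. Work first on the Zariski-open locus where $a_m a_{-m}\neq0$ and the roots $z_1,\dots,z_{2m}$ of $q$ are simple. Both sides of \eqref{eq:recipcoeff} are polynomials in the band entries, since the coefficients of $\chi$ extend polynomially through root collisions. So the identity then extends to the whole parameter space by density. The degenerate case $\varpi=0$ is handled separately: both sides reduce to polynomial identities that continue to hold in the limit, because the right side carries a nonnegative power of $\varpi$ only when $i\ge d/2$.

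The first step is the complement involution $J\mapsto J^c=[2m]\setminus J$, a bijection on $m$-subsets. By Vieta, $\prod_{j=1}^{2m}z_j=q(0)/a_m=a_{-m}/a_m$, because the product of the roots of a degree-$2m$ polynomial is $(-1)^{2m}q(0)/a_m$. Hence
\[
W_JW_{J^c}=a_m^2\prod_{j=1}^{2m}z_j=a_ma_{-m}=\varpi .
\]
The multiset $\{W_J\}$ is therefore invariant under $W\mapsto\varpi/W$, and none of the $W_J$ is zero because $\varpi\neq0$.

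The second step converts this invariance into a coefficient statement. Write $\chi(t)=\sum_{k=0}^d c_kt^{d-k}$ with $c_k=-r_k$, so that $c_0=1=-r_0$. Then
\[
t^d\chi(\varpi/t)=\prod_J(\varpi-tW_J)=\Bigl(\prod_J(-W_J)\Bigr)\prod_J\bigl(t-\varpi/W_J\bigr)=\Bigl(\prod_J(-W_J)\Bigr)\chi(t).
\]
Pairing $J$ with $J^c$ gives $\prod_J W_J=\varpi^{d/2}$; note that $d=\binom{2m}{m}$ is even for $m\ge1$, and $J\ne J^c$ always. Since $d$ is even, the constant is $\prod_J(-W_J)=\varpi^{d/2}$. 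Comparing the coefficients of $t^{d-i}$ on both sides yields $\varpi^{i}c_{d-i}\cdot\varpi^{-?}$-type relations, which after normalisation read $\varpi^{\,i-d/2}c_{d-i}=c_i$, i.e.\ $r_i=\varpi^{\,i-d/2}r_{d-i}$. Taking $i=d$ gives $r_d=\varpi^{d/2}r_0=-\varpi^{d/2}$, the stated special case.

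The main obstacle is purely the bookkeeping of powers of $\varpi$ and signs in the coefficient comparison. I would check it carefully on the case $m=1$, where $d=2$, $\chi(t)=t^2-a_0t+a_1a_{-1}$, $r_1=a_0$ and $r_2=-\varpi$, and the relation holds. The extension to the closure, including $\varpi=0$, is then routine by polynomial identity, since $i-d/2\ge0$ in the stated range.
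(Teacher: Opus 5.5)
Your proposal is correct and follows the paper's own proof: the complement pairing $W_JW_{J^c}=\varpi$, together with $d$ being even, gives $t^d\chi(\varpi/t)=\varpi^{d/2}\chi(t)$, and comparing coefficients of $t^{d-i}$ gives $\varpi^{i}c_{d-i}=\varpi^{d/2}c_i$, which is the claim and also fills in your unfinished bookkeeping step. The paper instead remarks that the pairing works directly with repeated roots listed with multiplicity; your first restricting to simple roots with $\varpi\ne0$ and then extending by polynomial identity is harmless, and it has the small advantage of explicitly covering the degenerate cases $\varpi=0$ and $a_m=0$.
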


\begin{proof}
Let $z_1,\ldots,z_{2m}$ be the roots of $q(z)=z^ma(z)$.  For $|J|=m$, put
\[
 W_J=(-1)^m a_m\prod_{j\in J}z_j.
\]
Since $q$ has constant term $a_{-m}$ and leading coefficient $a_m$,
\[
 \prod_{j=1}^{2m}z_j=\frac{a_{-m}}{a_m}.
\]
If $J^c$ is the complementary $m$-subset, then
\[
 W_JW_{J^c}=a_m^2\prod_{j=1}^{2m}z_j=a_{-m}a_m=\varpi.
\]
Thus the roots of $\chi$ are invariant under $W\mapsto \varpi/W$.  Because $d$
is even,
\[
 t^d\chi(\varpi/t)=\varpi^{d/2}\chi(t).
\]
Comparing the coefficients of $t^{d-i}$ gives \eqref{eq:recipcoeff}.
Distinctness of the roots is not needed for this complement argument: if
$q$ has repeated roots, list them with multiplicity; then
$J\leftrightarrow J^c$ gives the same pairing of the factors $W_J$.
\end{proof}

This theorem explains the reciprocal pattern already visible in Sweet's
pentadiagonal recurrence \cite{Sweet}, which is recovered constructively in the companion paper \cite{AlekseyevKhomovskyRecurrences2026}, and reduces by roughly half the number of independent coefficients that need to be computed in the balanced case.

\begin{example}[The balanced $(3,3)$ recurrence]\label{ex:33-reciprocity}
For $m=3$ the unrestricted determinant transfer has dimension
$d=\binom63=20$.  The companion recurrence paper
\cite{AlekseyevKhomovskyRecurrences2026} displays the explicit sparse
$20\times20$ row-column transfer $Q_{3,3}$.  Write
\[
 \det(tI-Q_{3,3})=t^{20}-r_1t^{19}-r_2t^{18}-\cdots-r_{20}
\]
and put $\varpi=a_{-3}a_3$.  Theorem~\ref{thm:reciprocal} specializes to
\begin{equation}\label{eq:33-reciprocity}
 r_{20-i}=\varpi^{10-i}r_i\quad(1\le i\le9),
 \qquad r_{20}=-\varpi^{10}.
\end{equation}
The first three coefficients are
\begin{align*}
 r_1={}&a_0,\\
 r_2={}&-a_{-1}a_1+a_{-2}a_2-a_{-3}a_3,\\
 r_3={}&a_2a_{-1}^2-a_{-2}a_3a_{-1}+a_{-2}a_1^2
       -2a_{-2}a_0a_2-a_{-3}a_1a_2+3a_{-3}a_0a_3.
\end{align*}
Thus the ten coefficients through $r_{10}$ determine the opposite half of
the degree-$20$ recurrence.  This is a concrete coefficient-level instance
of the complement symmetry behind the general reciprocity theorem, while the
large transfer matrix itself remains in the companion construction paper.
\end{example}

\subsection{Symmetric Toeplitz matrices}

Assume now that $a_{-j}=a_j$ for $1\le j\le m$.  Then $q$ is palindromic:
\[
 z^{2m}q(1/z)=q(z).
\]
Generically its roots can therefore be written
\[
 x_1,x_1^{-1},\ldots,x_m,x_m^{-1}.
\]
This reciprocal-root structure is central to the symmetric-polynomial
approach in \cite{AlexanderssonEtAl2021}.

For any finite ternary vector $\eps$, write
\begin{equation}\label{eq:ternary-support-size}
 s(\eps):=|\{i:\eps_i\ne0\}|.
\end{equation}

\begin{theorem}[Symmetric recurrence-degree bound]\label{thm:symmetric}
For symmetric $(m,m)$-banded Toeplitz matrices, the determinant sequence has
an annihilating recurrence of degree at most
\begin{equation}\label{eq:symorder}
 d_{\mathrm{sym}}=\frac{3^m+1}{2}.
\end{equation}
\end{theorem}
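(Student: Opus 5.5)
The plan is to work directly with Widom's root-product modes from \eqref{eq:widomform}. In the symmetric case $q$ is palindromic, so for generic symmetric parameters its roots are $x_1^{\pm1},\ldots,x_m^{\pm1}$, all simple. The goal is to count the distinct values among the $\binom{2m}{m}$ products $W_J$ and show there are at most $(3^m+1)/2$ of them. The resulting polynomial $\prod(t-W)$ over distinct values annihilates $D_n$ on the generic locus. The identity $\chi_{\mathrm{red}}(E)D_n=0$ is polynomial in the parameters, so it then extends to all symmetric parameters by Zariski density, with the coefficients of the reduced annihilator extended as polynomials.

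First, record which roots each $m$-subset $J$ contains. For each pair $\{x_i,x_i^{-1}\}$, the subset $J$ contains both, neither, or exactly one. Set $\eps_i=+1$ if $J$ contains only $x_i$, $\eps_i=-1$ if it contains only $x_i^{-1}$, and $\eps_i=0$ otherwise. Both-or-neither contributes $1$ to the product, so
\[
 W_J=(-1)^m a_m\prod_{i=1}^m x_i^{\eps_i}.
\]
Hence $W_J$ depends only on $\eps\in\{-1,0,1\}^m$.

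Next, determine which $\eps$ actually occur. We need $|J|=m$. The pairs with $\eps_i\neq0$ contribute $s(\eps)$ elements, using \eqref{eq:ternary-support-size}. The remaining $m-s(\eps)$ pairs must contribute $m-s(\eps)$ elements, each contributing $0$ or $2$. So $m-s(\eps)$ must be even, i.e.\ $s(\eps)\equiv m\pmod 2$, and any such parity class is realizable.

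Now count the admissible vectors. The number of $\eps$ with $s(\eps)=k$ is $\binom mk2^k$. Summing over $k\equiv m \pmod 2$ gives
\[
 \tfrac12\bigl((2+1)^m+(-1)^m(1-2)^m\bigr)\cdot\ldots
\]
More cleanly, $\sum_{k\equiv m}\binom mk 2^k=\tfrac12\bigl(3^m+(-1)^m(-1)^m\bigr)=(3^m+1)/2$. This uses $\sum_k\binom mk2^k(-1)^{m-k}=(2-1)^m=1$.

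So there are at most $(3^m+1)/2$ distinct modes, and
\[
 \chi_{\mathrm{sym}}(t)=\prod_{\eps\ \mathrm{admissible}}\Bigl(t-(-1)^ma_m\prod_i x_i^{\eps_i}\Bigr).
\]
Its coefficients are symmetric in each pair $x_i\leftrightarrow x_i^{-1}$ (via $\eps\mapsto-\eps$ on that coordinate, which preserves admissibility) and symmetric under permuting the $i$. They are therefore polynomials in the elementary symmetric functions of the $x_i+x_i^{-1}$. Those are in turn polynomial in $a_j/a_m$ via the Chebyshev substitution $z^{-m}q(z)/a_m=\prod(z+z^{-1}-x_i-x_i^{-1})$. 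Each coefficient is a Laurent polynomial in $a_m$ of bounded denominator, and the term count by degree in $x$ shows it is actually polynomial.

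The main obstacle is the passage from the generic locus to all symmetric parameters, where roots collide. There Widom's formula acquires polynomial-in-$n$ terms, and a naive count of distinct modes no longer bounds the degree. I would handle it by the polynomial identity argument. $D_n$ is a polynomial in $(a_0,\ldots,a_m)$. The identity $\sum_k c_k(a)D_{n+k}(a)=0$ holds on a Zariski-dense set once we know the $c_k$ are polynomial, so it holds everywhere. The delicate point is certifying polynomiality of the $c_k$ including at $a_m$. I would first try the Chebyshev-variable argument above, multiplying $\chi_{\mathrm{sym}}$ through by the appropriate power of $a_m$ if needed. This is harmless because each factor carries exactly one $a_m$.
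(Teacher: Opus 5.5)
Your proposal is correct and is essentially the paper's proof: reciprocal pairs give ternary exponent vectors, the constraint $|J|=m$ forces $s(\eps)\equiv m\pmod 2$, and $\sum_{s\equiv m}\binom ms2^s=(3^m+1)/2$ bounds the number of distinct Widom modes. The only difference is how you leave the simple-root locus: the paper does it in one line (``specializations can only lower the degree,'' i.e.\ semicontinuity of the Hankel rank), whereas you build a single annihilator with polynomial coefficients via $B_m$-invariance and $w=z+z^{-1}$. That is valid, and no extra power of $a_m$ is ever needed, because the $k$th coefficient is $a_m^k$ times a polynomial of total degree at most $k$ in the elementary symmetric functions of the $x_i+x_i^{-1}$, each of which carries exactly one factor $a_m^{-1}$.
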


\begin{proof}
Consider a root product $\prod_{j\in J}z_j$ with $|J|=m$.  For each
reciprocal pair $\{x_i,x_i^{-1}\}$ there are four possibilities: select
$x_i$ only, select $x_i^{-1}$ only, select both, or select neither.  Its
contribution to the product is therefore $x_i^{\eps_i}$ with
$\eps_i\in\{-1,0,1\}$.

Let $s=s(\eps)$ for $\eps=(\eps_1,\ldots,\eps_m)$.  The $s$ single
selections use $s$ roots; each
zero coordinate contributes either zero or two roots.  Hence an $m$-element
subset can yield $\eps$ only when $s\equiv m\pmod2$.  Consequently every
Widom characteristic factor belongs to the set
\[
 (-1)^ma_m x_1^{\eps_1}\cdots x_m^{\eps_m},
 \qquad
 \eps_i\in\{-1,0,1\},
 \quad s(\eps)\equiv m\pmod2.
\]
The number of such exponent vectors is
\[
 \sum_{\substack{0\le s\le m\\s\equiv m\pmod2}}
 \binom ms2^s
 =\frac{3^m+1}{2}.
\]
Taking the product of $t-W$ over the distinct possible characteristic
values gives an annihilator of degree at most this number.  Specializations
can only lower the degree.
\end{proof}

For $m=2$, the bound is $5$, consistent with the explicit Chebyshev
factorizations and order reduction known for symmetric pentadiagonal Toeplitz
determinants; see Elouafi \cite{Elouafi2018}.  The theorem gives the general
root-product counting mechanism behind that low-bandwidth phenomenon.

\subsection{Symmetry-adapted row-column straightening}\label{subsec:symmetric-straightening}

The spectral argument above gives the sharp recurrence bound, but determinant
signs also produce a purely boundary-minor reduction that has no permanent
analogue.  We formulate it in the subset labels of
Proposition~\ref{prop:state-recall}.  For two $j$-subsets of $[m]$, written in increasing order as
\[
 I=(i_k)_{k=1}^j,\qquad J=(j_k)_{k=1}^j,
\]
write $I\preceq J$ when $i_k\le j_k$ for every $1\le k\le j$.

\begin{theorem}[Catalan straightening of symmetric determinant states]
\label{thm:symmetric-catalan-straightening}
Assume $a_{-s}=a_s$ for $1\le s\le m$.  At exchange level $j$, every
determinant row-column boundary state is a parameter-independent linear
combination of states whose two deficit sets, after the common normalization
of Proposition~\ref{prop:state-recall}, form a comparable pair
$I\preceq J$.  Consequently a symmetry-adapted row-column construction may be
carried out with at most
\begin{equation}\label{eq:symmetric-narayana}
 N_{m,j}
 =\frac{1}{m+1}\binom{m+1}{j}\binom{m+1}{j+1}
\end{equation}
states at level $j$, and with at most
\begin{equation}\label{eq:symmetric-catalan}
 \sum_{j=0}^m N_{m,j}
 =C_{m+1}
 =\frac{1}{m+2}\binom{2m+2}{m+1}
\end{equation}
states in total.  Here $N_{m,j}=N(m+1,j+1)$ is a Narayana number.
\end{theorem}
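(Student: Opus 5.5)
The plan is to identify the level-$j$ determinant boundary states with the minors $\det A[R,C]$ of a fixed Toeplitz section. Here $R$ and $C$ are obtained by deleting from a standard block of rows and columns the sets encoded by the normalized signature $\Sigma(A,B)$ of \eqref{eq:sigmaAB}. By \eqref{eq:AB-from-signature}, the state is then labelled by its two deficit sets $I,J\in\binom{[m]}{j}$, one on the row side and one on the column side. Under $a_{-s}=a_s$ the section is symmetric, $A_n^{\mathsf T}=A_n$. The minor attached to $(I,J)$ then equals the minor attached to $(J,I)$, so we may pass freely between the two labels. This alone gives only the transposition quotient. The additional reduction has to come from quadratic (Plücker/Garnir-type) straightening relations among minors, which hold for determinants but not for permanents.

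The key steps, in order, are as follows.

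First, I would realize the boundary states of a given level inside one exterior power. The relevant $(m+j)$-row strip of the banded section has rows $v_1,\dots,v_{2m}$ (up to the common normalization). A state of level $j$ is then a pairing $\langle v_{\bar I}\wedge\cdots,\ w_{\bar J}\wedge\cdots\rangle$ of a wedge of selected rows with a wedge of selected columns.

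Second, I would use symmetry to rewrite the column wedge in terms of the row data. Because $a_{-s}=a_s$, the column vectors of the boundary block are the reversed row vectors. The state is then a bilinear expression $\beta(e_I,e_J)$ in a single vector space, with $\beta$ symmetric in its two arguments.

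Third, I would apply the standard straightening law for bideterminants, i.e.\ the Doubilet--Rota--Stein straightening of products of minors, or equivalently the Désarménien basis theorem. It says that every bitableau $(I\mid J)$ is an integral, parameter-independent combination of standard bitableaux. For a single-column shape with symmetric identification, standardness amounts to requiring the pair $(I,J)$ to be comparable, $I\preceq J$, after using the symmetry $(I,J)\sim(J,I)$. Concretely, when $I\not\preceq J$, take the first index $k$ with $i_k>j_k$. Exchanging the entries through a Garnir relation in the $(j+1)$ positions $i_k,\dots$ and $j_1,\dots,j_k$ expresses $(I\mid J)$ through pairs that are strictly larger in a fixed total order (say lexicographic on the merged sequence). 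Induction on this order terminates in comparable pairs. The coefficients are signs coming from the alternating sum, so they are parameter-independent.

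Fourth, I would count. The comparable pairs $I\preceq J$ of $j$-subsets of $[m]$ are exactly the pairs of non-crossing lattice paths. By the Lindström--Gessel--Viennot lemma, their number is $\det\begin{pmatrix}\binom mj&\binom m{j-1}\\ \binom m{j+1}&\binom mj\end{pmatrix}=\binom mj^2-\binom m{j-1}\binom m{j+1}$, which equals $N(m+1,j+1)$. Summing over $j$ gives the Narayana sum $C_{m+1}$. Finally, I would check that the transfer maps spans of level-$j$ states into spans of level-$(j\pm1)$ and level-$j$ states. This holds because straightening is applied levelwise to vectors that are themselves states, so the reduced construction closes on the comparable states.

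The main obstacle is the second and third steps. The boundary minors live in sections of different ambient sizes, and the normalization of Proposition~\ref{prop:state-recall} shifts indices (the $\langle\cdot\rangle_m$ wrap and the $\widehat A$ rule). I therefore need to verify that the Garnir exchange is compatible with that normalization, i.e.\ that it is carried out in one fixed strip at each level. I would first try $m=2$ by hand, where $6\to5$ states and the single relation should be visible explicitly, and then build the general identification with bitableaux from that case.
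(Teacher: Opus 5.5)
\textbf{The proposal has a gap in Step~3 and is missing the reduction that Step~2 needs.}

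The straightening law you cite (Doubilet--Rota--Stein, D\'esarm\'enien) concerns bideterminants of a \emph{generic} matrix. For the shape of a single minor, every pair $(I\mid J)$ is already standard, and distinct $j\times j$ minors of a generic matrix are linearly independent. The relations in that theory involve products of minors, so they cannot shrink the linear span of the level-$j$ states. The reduction to comparable pairs needs \emph{linear} relations that exist only because rows and columns are identified. An example is $\Delta_{12,34}-\Delta_{13,24}+\Delta_{14,23}=0$, which holds for every symmetric $4\times4$ matrix but fails for a generic one. Your exchange of entries between $I$ and $J$ has the right shape. However, ``standardness with symmetric identification means $I\preceq J$'' does not follow from the generic theory. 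It is exactly the doset straightening law for symmetric matrices of De Concini--Eisenbud--Procesi, which the paper cites. Equivalently, the span of the $j$-minors of a symmetric matrix is the Schur module of shape $(2^j)$ with its semistandard basis. Both the validity of your Garnir relations and the linear independence of the doset minors have to come from that result.

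That law applies to $j\times j$ minors of \emph{one symmetric matrix on the $m$-element window}. It does not apply to an arbitrary symmetric bilinear form $\beta$ on $\bigwedge^j$, which in general satisfies only $\beta(e_I,e_J)=\beta(e_J,e_I)$, i.e.\ your transposition quotient. The boundary states are complementary minors $M_{I,J}=\det T_N[I^c,J^c]$ of order $N-j$. Your Steps~1--2 never show that these are minors of a single symmetric form on the window.

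The missing ingredient is Jacobi's complementary-minor identity:
\[
M_{I,J}=(-1)^{\sum I+\sum J}\det(T_N)\det\bigl(T_N^{-1}[J,I]\bigr),
\]
with $T_N^{-1}$ symmetric. The paper applies it on the invertible locus and then extends by polynomial identity. The sign twist is harmless, because $\sum I+\sum J$ is unchanged by any exchange of entries between $I$ and $J$.

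This also removes what you call your main obstacle. Undo the normalization translations, so that all states at a fixed level are complementary minors of one large section with a common boundary window. Straightening is done levelwise, so the different ambient sizes across levels never interact.

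Your lattice-path count is correct and agrees with the paper's reflection-principle count. Your proposed test case $m=2$ (and likewise $m=3$) exhibits only the transposition relation. The first relation beyond transposition appears at $m=4$, $j=2$.
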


\begin{proof}
Undo the harmless translations used in the normalization of a level-$j$
row-column signature.  For a sufficiently large symmetric Toeplitz section
$T_N$, the resulting boundary state is, up to its fixed cofactor sign, a
complementary minor
\[
 M_{I,J}=\det T_N[I^c,J^c],
\]
where the deleted row and column positions, expressed in their common ordered
boundary window, are two $j$-subsets $I,J\subseteq[m]$.  On the Zariski-open
set where $T_N$ is invertible, Jacobi's complementary-minor identity gives
\begin{equation}\label{eq:symmetric-jacobi-boundary}
 M_{I,J}
 =(-1)^{\sum I+\sum J}\det(T_N)
   \det\!\bigl(T_N^{-1}[J,I]\bigr).
\end{equation}
The inverse is symmetric.  Hence all level-$j$ states, after removal of the
common factor $\det(T_N)$ and the cofactor signs, are $j\times j$ minors of
one symmetric matrix.

For a generic symmetric $m\times m$ matrix, the minors indexed by comparable
pairs $I\preceq J$ are the \emph{doset minors}.  The classical doset
straightening law for symmetric matrices of De Concini, Eisenbud, and Procesi
\cite{DeConciniEisenbudProcesi1982} gives a basis of standard monomials in
these comparable minors.  In the homogeneous component spanned by the
$j$-minors themselves, this says that every $j$-minor is a scalar linear
combination of doset $j$-minors and that the doset minors are linearly
independent.  Applying these universal linear identities
to $T_N^{-1}$ in \eqref{eq:symmetric-jacobi-boundary}, and then restoring the
common factor and signs, straightens every boundary state to states with
$I\preceq J$.  Because the resulting identities are polynomial identities in
the entries of $T_N$ after clearing the common determinant, they extend from
the invertible locus to all symmetric specializations.

It remains to count the comparable pairs.  By the reflection principle,
\begin{align}
 \#\{(I,J): |I|=|J|=j,\ I\preceq J\}
 &=\binom mj^2-\binom m{j-1}\binom m{j+1}\notag\\
 &=\frac{1}{m+1}\binom{m+1}{j}\binom{m+1}{j+1},
 \label{eq:symmetric-doset-count}
\end{align}
which is the Narayana number in \eqref{eq:symmetric-narayana}.  Summing the
Narayana numbers over $j=0,\ldots,m$ gives the Catalan number
\eqref{eq:symmetric-catalan}.  Since the straightening coefficients are
independent of the Toeplitz parameters, one may straighten after each Laplace
step, obtaining a closed symmetry-adapted transfer on at most these states.
\end{proof}

\begin{example}[The first extra determinant relation: $m=4$]
\label{ex:symmetric-m4-straightening}
For $m=4$ and $j=2$, transposition alone leaves $21$ unordered pairs of
$2$-subsets, whereas \eqref{eq:symmetric-narayana} gives only $20$ doset
states.  The missing dimension is visible in the elementary identity valid
for every symmetric matrix $X$:
\begin{equation}\label{eq:symmetric-m4-plucker}
 \Delta_{12,34}(X)-\Delta_{13,24}(X)+\Delta_{14,23}(X)=0.
\end{equation}
Indeed $12\preceq34$ and $13\preceq24$, whereas $14\npreceq23$, so the last
minor straightens to the first two.  Across all exchange levels the counts are
\[
 1,\ 10,\ 20,\ 10,\ 1,
\]
and hence the $70$ unrestricted states reduce to $C_5=42$ symmetry-adapted
determinant states.
\end{example}

\subsection{Primitive symplectic compression}\label{subsec:symmetric-primitive}

The Catalan straightening has a natural exterior-power interpretation.  We
use it to recover the full power-of-three upper bound from the autonomous
transfer itself, without appealing to the Widom expansion of the scalar
sequence.  The companion paper identifies the balanced determinant evolution
with the compound transfer
\[
 T=(-1)^m a_m\,\bigwedge^m C_q,
\]
where $C_q$ is the Frobenius companion matrix of $q(z)/a_m$
\cite{AlekseyevKhomovskyRecurrences2026}.  We work over a splitting field on
the open set where the roots are simple.

Let $V$ be a $2m$-dimensional symplectic vector space, and choose a
symplectic basis $\{e_i,f_i\}_{i=1}^m$, so that
\[
 \Omega(e_i,f_j)=\delta_{ij},\qquad
 \Omega(e_i,e_j)=\Omega(f_i,f_j)=0.
\]
Write $\Lambda_\Omega:\bigwedge^mV\to\bigwedge^{m-2}V$ for contraction by
$\Omega$ and
\begin{equation}\label{eq:primitive-middle}
 P_m:=\ker\Lambda_\Omega.
\end{equation}
The standard symplectic Lefschetz decomposition gives
\begin{equation}\label{eq:primitive-dimension}
 \dim P_m
 =\binom{2m}{m}-\binom{2m}{m-2}
 =C_{m+1}.
\end{equation}
See, for example, \cite{FultonHarris1991} for the primitive exterior-power
realization of the fundamental symplectic representations.

The next theorem has a simple counting interpretation.  Each reciprocal
pair contributes exponent $+1$, $-1$, or $0$ to a transfer mode.  Several
boundary states can realize the same ternary exponent vector, and those copies
form a Catalan-number multiplicity space.  For one fixed autonomous transfer,
however, every copy of the same weight carries the same eigenvalue, so the
minimal polynomial counts distinct ternary weights rather than all Catalan
states.

\begin{theorem}[Primitive symplectic state and weight compression]
\label{thm:symmetric-primitive}
Assume $a_{-s}=a_s$ for $1\le s\le m$.  On the generic simple-root locus,
the $C_{m+1}$-dimensional symmetry-adapted determinant state space of
Theorem~\ref{thm:symmetric-catalan-straightening} is a realization of the
primitive middle exterior module $P_m$.  If
\[
 q(z)=a_m\prod_{i=1}^m(z-x_i)(z-x_i^{-1}),
\]
then the autonomous transfer on this module has weights
\begin{equation}\label{eq:primitive-weight-set}
 \eps=(\eps_1,\ldots,\eps_m)\in\{-1,0,1\}^m,
 \qquad s(\eps)\equiv m\pmod2,
\end{equation}
with transfer eigenvalue
\begin{equation}\label{eq:primitive-weight-eigenvalue}
 \lambda_\eps=(-1)^m a_m
 x_1^{\eps_1}\cdots x_m^{\eps_m}.
\end{equation}
If $s=s(\eps)$ and $m-s=2r$, then the multiplicity of this
weight in $P_m$ is the Catalan number
\begin{equation}\label{eq:primitive-weight-multiplicity}
 \operatorname{mult}_{P_m}(\eps)=C_r
 =\frac1{r+1}\binom{2r}{r}.
\end{equation}
Consequently the minimal polynomial of the symmetry-adapted autonomous
transfer has degree at most
\begin{equation}\label{eq:primitive-minpoly-bound}
 \#\{\eps\text{ in }\eqref{eq:primitive-weight-set}\}
 =\frac{3^m+1}{2},
\end{equation}
and this transfer minimal-polynomial degree is generically exactly
$(3^m+1)/2$.
\end{theorem}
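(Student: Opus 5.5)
The plan is to work entirely inside the compound realization $T=(-1)^m a_m\bigwedge^m C_q$ recalled from the companion paper. First diagonalize $C_q$ over a splitting field on the simple-root locus. Its eigenvectors $v_i$ for $x_i$ and $w_i$ for $x_i^{-1}$ give a basis of $V=\mathbb{C}^{2m}$. Palindromicity of $q$ means $C_q$ is conjugate to its inverse with the reciprocal pairing, so there is a nondegenerate alternating form $\Omega$ preserved by $C_q$ up to the scalar $\det C_q^{1/m}$; for a palindromic monic $q$ the constant term is $1$, so $C_q$ is genuinely symplectic. We may therefore normalize $e_i=v_i$, $f_i=w_i$ to a symplectic basis, which is possible because the eigenspaces for $x_i$ and $x_k^{\pm1}$ are $\Omega$-orthogonal unless they are reciprocal. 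Then $C_q$ lies in the maximal torus of $\mathrm{Sp}(V)$, and $\bigwedge^m C_q$ commutes with $\Lambda_\Omega$, so it preserves $P_m=\ker\Lambda_\Omega$.

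Next, identify the Catalan state space of Theorem~\ref{thm:symmetric-catalan-straightening} with $P_m$. Since $\bigwedge^m V\cong P_m\oplus\Omega\wedge\bigwedge^{m-2}V$ equivariantly, the scalar sequence $D_n$ is a fixed linear functional of $T^n$ applied to an initial vector. The step I expect to be the main obstacle is showing that this functional/initial-vector pair only sees the primitive summand, i.e.\ that the Lefschetz complement is exactly the span of the straightening relations. My first attempt would be a dimension-and-equivariance argument. The straightened state space is a $T$-invariant quotient of dimension $C_{m+1}$ by Theorem~\ref{thm:symmetric-catalan-straightening}, and $\dim P_m=C_{m+1}$ by \eqref{eq:primitive-dimension}. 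The relations are universal symmetric-minor identities, such as \eqref{eq:symmetric-m4-plucker}, which is a contraction by the form. It would then suffice to check that the straightening kernel is $\mathrm{Sp}$-stable and contains $\Omega\wedge\bigwedge^{m-2}V$, whence equality by dimensions. I would verify the containment on the generating relations via Jacobi's identity \eqref{eq:symmetric-jacobi-boundary}.

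The weights follow directly. A basis vector $e_I\wedge f_K$ with $|I|+|K|=m$ has eigenvalue $(-1)^m a_m\prod_{i\in I}x_i\prod_{k\in K}x_k^{-1}$. This yields exponent $\eps_i=[i\in I]-[i\in K]$, and it satisfies $s(\eps)\equiv m$ since the indices in $I\cap K$ come in pairs, giving \eqref{eq:primitive-weight-eigenvalue}. For the multiplicity, fix $\eps$ with $m-s=2r$. The weight space in $\bigwedge^m V$ is $e_{S_+}\wedge f_{S_-}\wedge\bigwedge^{r}$ of the pairs $e_k\wedge f_k$ over the $2r$ zero coordinates, with dimension $\binom{2r}{r}$. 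Contraction maps it onto the corresponding weight space with $r-1$ pairs, of dimension $\binom{2r}{r-1}$; surjectivity holds by $\mathfrak{sl}_2$ (Lefschetz) theory on the $2r$-dimensional symplectic factor. The kernel therefore has dimension $\binom{2r}{r}-\binom{2r}{r-1}=C_r$, which is \eqref{eq:primitive-weight-multiplicity}. The weight-count identity \eqref{eq:primitive-minpoly-bound} is the binomial sum already computed in the proof of Theorem~\ref{thm:symmetric}.

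Finally, since $T$ is diagonalizable on $P_m$, its minimal polynomial is $\prod(t-\lambda_\eps)$ over the distinct $\lambda_\eps$. This gives the upper bound, because every $\eps$ in \eqref{eq:primitive-weight-set} occurs with $C_r\ge1$. For generic exactness, the $\lambda_\eps$ must be pairwise distinct for generic $x_1,\dots,x_m$. Distinct ternary vectors give distinct Laurent monomials, so $\lambda_\eps=\lambda_{\eps'}$ cuts out a proper subvariety. The map from symmetric band parameters to $(x_1,\dots,x_m)$ up to the Weyl group is dominant, since every palindromic $q$ arises, so a nonempty Zariski-open set of parameters has all $(3^m+1)/2$ eigenvalues distinct.
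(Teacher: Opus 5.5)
\textbf{Where your proposal matches the paper.} Apart from the first assertion of the theorem, your argument is the paper's. It uses a symplectic eigenbasis $e_i,f_i$ of $C_q$, invariance of $P_m=\ker\Lambda_\Omega$, the weight rule $\varepsilon_i=[i\in I]-[i\in K]$, the count $\binom{2r}{r}-\binom{2r}{r-1}=C_r$ from weight-preserving surjectivity of contraction, and one minimal-polynomial factor per distinct weight. One small correction: in the multiplicity step the relevant symplectic factor is the $4r$-dimensional span of the $2r$ zero-weight pairs, equivalently the down operator on the Boolean lattice of a $2r$-set, not a $2r$-dimensional space. Your genericity step (distinct ternary vectors give distinct Laurent monomials, and every palindromic $q$ arises) is a valid abstract substitute for the paper's explicit witness $x_i=t^{3^{i-1}}$.

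\textbf{The gap.} It is the identification of the straightened Catalan space with $P_m$, which you flag as the main obstacle but do not resolve. Your plan also aims at the hard direction. Showing that the De Concini--Eisenbud--Procesi straightening relations are contractions by the form is not something Jacobi's identity delivers. Likewise, $\mathrm{Sp}$-stability of the straightening kernel, and the $T$-invariance of the straightened quotient that you assume, are consequences of the identification, not available inputs.

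\textbf{The missing idea.} The paper supplies it via the Lagrangian graph. By \eqref{eq:symmetric-jacobi-boundary}, the level-$j$ states are, up to one common factor and signs, the $j\times j$ minors of a single symmetric matrix $X$. These minors are exactly the Pl\"ucker coordinates of $p(X)=u_1\wedge\cdots\wedge u_m$ with $u_i=e_i+\sum_j x_{ji}f_j$. Since $\Omega(u_i,u_j)=x_{ij}-x_{ji}=0$, the graph is Lagrangian, so $\Lambda_\Omega p(X)=0$. Hence every functional in $P_m^{\perp}=\Omega\wedge\bigwedge^{m-2}V^{*}$ vanishes on every state; this is the easy containment. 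Equality of the relation space with $P_m^{\perp}$ then follows from dimensions. That step needs the linear independence of the doset minors (part of the De Concini--Eisenbud--Procesi input), not just the ``at most'' count in Theorem~\ref{thm:symmetric-catalan-straightening}, together with $\dim P_m^{\perp}=\binom{2m}{m-2}$.

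\textbf{Which frame.} This computation lives in the Pl\"ucker frame where the states are minors, with the standard form $\Omega(e_i,f_j)=\delta_{ij}$. A containment written with your $C_q$-eigenbasis form cannot be checked through Jacobi directly. As in the paper, you must rely on the companion identification of the row--column transfer with $(-1)^ma_m\bigwedge^m C_q$ to carry the primitive subspace over to the spectral computation.
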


\begin{proof}
We first identify the Catalan state space.  For a symmetric $m\times m$
matrix $X=(x_{ij})$, put
\[
 u_i=e_i+\sum_{j=1}^m x_{ji}f_j,
 \qquad
 p(X)=u_1\wedge\cdots\wedge u_m.
\]
Because $X=X^{\mathsf T}$,
\[
 \Omega(u_i,u_j)=x_{ij}-x_{ji}=0,
\]
so the graph of $X$ is Lagrangian and therefore
$\Lambda_\Omega p(X)=0$.  The Pl\"ucker coordinates of $p(X)$ are, up to the
usual signs, precisely all minors of $X$.  In the proof of
Theorem~\ref{thm:symmetric-catalan-straightening}, Jacobi's identity turns
the row-column boundary states, after removal of a common determinant factor,
into exactly such minors of a symmetric matrix.  Their doset straightening
has dimension $C_{m+1}$, equal to \eqref{eq:primitive-dimension}; hence the
resulting coordinate space is naturally dual to the full primitive module
$P_m$.  Using the standard symplectic self-duality of $P_m$, we identify the
two spaces.  Thus the Catalan reduction is a boundary-minor coordinate
realization of the primitive middle module.

Now diagonalize the reciprocal companion transfer over the splitting field.
Choose $e_i,f_i$ to be eigenvectors of $C_q$ with eigenvalues $x_i$ and
$x_i^{-1}$, respectively, and normalize the symplectic form so that the two
vectors in each reciprocal pair are paired.  Then $C_q$ preserves $\Omega$,
so contraction commutes with its exterior-power action and $P_m$ is
invariant.  The compound factor $(-1)^ma_m$ gives
\eqref{eq:primitive-weight-eigenvalue}.

A wedge basis vector has weight $\eps_i=1$ if it uses $e_i$ but not $f_i$,
weight $-1$ if it uses $f_i$ but not $e_i$, and weight $0$ if it uses both or
neither.  If $s$ coordinates are nonzero, the remaining $m-s=2r$ coordinates
must contribute exactly $r$ doubled reciprocal pairs in degree $m$.
Therefore the $\eps$-weight multiplicity in $\bigwedge^mV$ is
$\binom{2r}{r}$.  The same weight occurs in $\bigwedge^{m-2}V$ with
multiplicity $\binom{2r}{r-1}$.  Symplectic contraction is surjective in
middle degree and preserves weights, so
\[
 \dim(P_m)_\eps
 =\binom{2r}{r}-\binom{2r}{r-1}
 =C_r,
\]
proving \eqref{eq:primitive-weight-multiplicity}.  In particular every
admissible weight occurs.

For a fixed autonomous transfer, all $C_r$ copies of one weight carry the
same scalar eigenvalue \eqref{eq:primitive-weight-eigenvalue}; multiplicity
therefore contributes only one factor to the minimal polynomial.  Counting
the admissible ternary vectors gives
\[
 \sum_{\substack{0\le s\le m\\s\equiv m\ (2)}}\binom ms2^s
 =\frac{3^m+1}{2},
\]
which proves \eqref{eq:primitive-minpoly-bound}.  Finally take
$x_i=t^{3^{i-1}}$ with $t>1$.  Balanced-ternary uniqueness makes all the
numbers in \eqref{eq:primitive-weight-eigenvalue} distinct, so every
admissible weight contributes a distinct eigenvalue and the displayed degree
is attained.  Hence it is the generic minimal-polynomial degree of the
symmetry-adapted transfer.
\end{proof}

\begin{corollary}[Catalan multiplicities versus distinct autonomous modes]
\label{cor:symmetric-catalan-weight-identity}
The primitive weight decomposition gives the identities
\begin{align}
 C_{m+1}
 &=\sum_{\substack{0\le s\le m\\s\equiv m\ (2)}}
   \binom ms2^s C_{(m-s)/2},
 \label{eq:catalan-weight-sum}\\
 C_{m+1}-\frac{3^m+1}{2}
 &=\sum_{\substack{0\le s\le m\\s\equiv m\ (2)}}
   \binom ms2^s\bigl(C_{(m-s)/2}-1\bigr).
 \label{eq:catalan-weight-excess}
\end{align}
Thus the first reduction
$\binom{2m}{m}\to C_{m+1}$ removes universal determinant relations among
symmetric boundary minors, while the second
$C_{m+1}\to(3^m+1)/2$ forgets repeated copies of the same weight for the
fixed autonomous transfer.
\end{corollary}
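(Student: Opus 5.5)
The plan is to read both identities directly off the weight decomposition in Theorem~\ref{thm:symmetric-primitive}, with no further use of the Toeplitz structure. The module $P_m$ splits as a direct sum of its weight spaces $(P_m)_\eps$. By \eqref{eq:primitive-weight-set} and \eqref{eq:primitive-weight-multiplicity}, the only weights that occur are the ternary vectors with $s(\eps)\equiv m\pmod 2$, and each has multiplicity $C_{(m-s)/2}$. Since $\dim P_m=C_{m+1}$ by \eqref{eq:primitive-dimension}, summing multiplicities over all admissible weights gives $C_{m+1}=\sum_\eps C_{(m-s(\eps))/2}$.

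Next I will group the weights by their support size $s$. There are $\binom ms$ ways to choose the support and $2^s$ ways to choose the nonzero signs. This gives exactly \eqref{eq:catalan-weight-sum}.

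For \eqref{eq:catalan-weight-excess}, I subtract the count $(3^m+1)/2=\sum_{s\equiv m}\binom ms2^s$, which was already established in the proof of Theorem~\ref{thm:symmetric} and in \eqref{eq:primitive-minpoly-bound}, term by term from \eqref{eq:catalan-weight-sum}.

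As an independent sanity check, not needed for the proof, I would verify \eqref{eq:catalan-weight-sum} combinatorially. The generating-function form is
\[
C_{m+1}=\sum_{r}\binom{m}{2r}2^{m-2r}C_r,
\]
which is Touchard's identity. Recognizing it as Touchard's identity confirms that no multiplicity has been miscounted.

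The interpretive sentence follows directly from the earlier results. The first reduction is the doset straightening of Theorem~\ref{thm:symmetric-catalan-straightening}, which applies universal minor relations. The second reduction discards the factor $C_r$ per weight because all copies of a weight share the scalar eigenvalue \eqref{eq:primitive-weight-eigenvalue}.

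The only step requiring care is making sure the weight-space decomposition is a direct sum whose dimensions add to $\dim P_m$. This holds because $C_q$ acts diagonalizably on the generic simple-root locus, and weight dimensions are independent of the specialization of the $x_i$.
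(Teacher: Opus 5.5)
Your argument is correct and is the same as the paper's, where the proof is left implicit: you sum the multiplicities $C_{(m-s)/2}$ from Theorem~\ref{thm:symmetric-primitive} over the admissible ternary weights, grouped by support size, to get $\dim P_m=C_{m+1}$, and then subtract the weight count $(3^m+1)/2$ term by term. Your Touchard-identity check is a nice independent confirmation, and the direct-sum step does not need diagonalizability of $C_q$: $\Lambda_\Omega$ preserves weights, so its kernel is automatically a direct sum of weight spaces.
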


\begin{remark}[The first two multiplicity drops]
\label{rem:symmetric-multiplicity-drops}
For $m=4$, the only repeated primitive weight is the zero weight.  It has
multiplicity $C_2=2$, so
\[
 70\longrightarrow42\longrightarrow41.
\]
Thus the final one-dimensional reduction is the extra copy of the zero
weight, whose eigenvalue is $a_4$.

For $m=5$, the ten weights with exactly one nonzero coordinate each have
multiplicity $C_2=2$, while all other weights are simple.  Hence
\[
 252\longrightarrow132\longrightarrow122.
\]
The characteristic polynomial of the ten redundant copies is
\[
 \prod_{i=1}^5(t+a_5x_i)(t+a_5x_i^{-1})
 =a_5^9 q\!\left(-\frac{t}{a_5}\right).
\]
These examples explain the first discrepancies between Catalan state count
and power-of-three recurrence degree without introducing additional ad hoc
minor identities.
\end{remark}

\begin{remark}[State count, transfer minimal polynomial, and scalar sharpness]
\label{rem:catalan-versus-widom}
The three relevant quantities should be kept distinct.  For example,
\[
\begin{array}{c|rrrr}
 m&\binom{2m}{m}&\dfrac{\binom{2m}{m}+2^m}{2}&C_{m+1}&\dfrac{3^m+1}{2}\\ \hline
 4&70&43&42&41\\
 5&252&142&132&122\\
 6&924&494&429&365
\end{array}
\]
The second column after the unrestricted count is the transposition-orbit
count that applies directly to symmetric permanents.  The Catalan column is
the determinant-specific boundary-state dimension.  The last column is the
generic minimal-polynomial degree of the autonomous primitive transfer, as
well as the Widom mode count.  Theorem~\ref{thm:symmetric-primitive} therefore
recovers the complete symmetric determinant upper bound from transfer
symmetry.  To conclude that the scalar principal determinant coordinate
actually sees every generic mode one still needs observability/noncancellation;
that is supplied independently by the Widom--Hankel argument in
Theorem~\ref{thm:sharp}.  The two viewpoints are complementary rather than
redundant.
\end{remark}

\subsection{Skew-symmetric Toeplitz matrices}

Assume $a_0=0$ and $a_{-j}=-a_j$.  Then
\[
 z^{2m}q(1/z)=-q(z),
\]
so $1$ and $-1$ are roots and the remaining roots occur in reciprocal
pairs.  Generically we may write them as
\[
 1,-1,x_1,x_1^{-1},\ldots,x_{m-1},x_{m-1}^{-1}.
\]

\begin{proposition}[Half-size square factorization for skew Toeplitz sections]
\label{prop:skew-toeplitz-square}
Let
\[
 A_{2r}=(c_{j-i})_{i,j=1}^{2r},
 \qquad c_0=0,\qquad c_{-s}=-c_s,
\]
be any even-order skew-symmetric Toeplitz matrix (with $c_s=0$ outside the
prescribed band, when the matrix is banded).  Define the $r\times r$
Toeplitz-minus-Hankel matrix
\begin{equation}\label{eq:skew-toeplitz-half-matrix}
 B_r=\bigl(c_{j-i}-c_{2r+1-i-j}\bigr)_{i,j=1}^r.
\end{equation}
Then
\begin{equation}\label{eq:skew-toeplitz-half-square}
 \det A_{2r}=\det(B_r)^2.
\end{equation}
With the standard Pfaffian convention,
\begin{equation}\label{eq:skew-toeplitz-pf-half}
 \operatorname{Pf}(A_{2r})=(-1)^r\det B_r.
\end{equation}
For odd order, $\det A_{2r+1}=0$.
\end{proposition}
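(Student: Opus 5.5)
The plan is to exploit the persymmetry of Toeplitz matrices. Let $J$ be the $2r\times2r$ exchange matrix, $J_{ik}=\delta_{i,2r+1-k}$. Every Toeplitz matrix satisfies $JAJ=A^{\mathsf T}$. Combined with $A^{\mathsf T}=-A$, this gives $JA=-AJ$.

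Hence $A$ interchanges the two eigenspaces $E_\pm=\ker(J\mp I)$, each of dimension $r$. Take the orthonormal bases
\[
 w_i=\tfrac{1}{\sqrt2}(e_i+e_{2r+1-i})\in E_+,\qquad
 v_i=\tfrac{1}{\sqrt2}(e_i-e_{2r+1-i})\in E_-,\qquad 1\le i\le r,
\]
and let $U$ be the orthogonal matrix with columns $w_1,\dots,w_r,v_1,\dots,v_r$. Then
\[
 U^{\mathsf T}AU=\begin{pmatrix}0&X\\ Y&0\end{pmatrix},
\]
and skew-symmetry of $A$ forces $Y=-X^{\mathsf T}$.

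First I will compute the block determinant. Moving the last $r$ columns to the front costs $(-1)^{r^2}=(-1)^r$, which gives $\det\begin{pmatrix}X&0\\0&-X^{\mathsf T}\end{pmatrix}=(-1)^r(\det X)^2$. Together with the column sign this is $(\det X)^2$. Since $\det U^2=1$, we get $\det A_{2r}=(\det X)^2$.

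Second, I will identify $X$ with $\pm B_r$ by direct expansion. Write $\bar k=2r+1-k$ and $A=(a_{ik})$ with $a_{ik}=c_{k-i}$. Then
\[
 X_{ij}=w_i^{\mathsf T}Av_j=\tfrac12\bigl(a_{ij}-a_{i\bar j}+a_{\bar i j}-a_{\bar i\bar j}\bigr).
\]
The Toeplitz and skew relations give $a_{\bar i\bar j}=c_{i-j}=-c_{j-i}$, so $a_{ij}-a_{\bar i\bar j}=2c_{j-i}$. Similarly $a_{i\bar j}=c_{2r+1-i-j}$ and $a_{\bar i j}=c_{i+j-2r-1}=-c_{2r+1-i-j}$. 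Therefore $X_{ij}=c_{j-i}-c_{2r+1-i-j}$, i.e.\ $X=B_r$, and \eqref{eq:skew-toeplitz-half-square} follows. Any stray overall sign in this bookkeeping is harmless, since it disappears in the square.

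For the Pfaffian I will use $\operatorname{Pf}(U^{\mathsf T}AU)=\det(U)\operatorname{Pf}(A)$ together with
\[
 \operatorname{Pf}\begin{pmatrix}0&X\\-X^{\mathsf T}&0\end{pmatrix}=(-1)^{r(r-1)/2}\det X.
\]
The factor $\det U$ is computed by pairing $e_i$ with $e_{\bar i}$: each $2\times2$ block $\frac1{\sqrt2}\begin{pmatrix}1&1\\1&-1\end{pmatrix}$ contributes $-1$. There is also a permutation sign from the reordering $(w_1,\dots,w_r,v_1,\dots,v_r)$ and from the reversal placing $e_{\bar i}$ opposite $e_i$; both are explicit powers of $(-1)^{r(r-1)/2}$ and $(-1)^r$. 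Collecting these exponents should leave exactly $(-1)^r$, giving \eqref{eq:skew-toeplitz-pf-half}. As a safeguard I will check $r=1$ directly: $\operatorname{Pf}\begin{pmatrix}0&c_1\\-c_1&0\end{pmatrix}=c_1$, while $B_1=(c_0-c_1)=(-c_1)$, so $(-1)^1\det B_1=c_1$, consistent.

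The odd case is immediate: $\det A=\det A^{\mathsf T}=(-1)^{2r+1}\det A$. I expect the sign bookkeeping for the Pfaffian, the combined parity of $\det U$ and the block reordering, to be the only delicate step. If the direct parity count proves error-prone, I will instead pin the sign by one specialization, for instance $c_1=1$ and all other $c_s=0$. The sign is a constant $\pm1$, because both sides are polynomials in the $c_s$ whose squares agree identically.
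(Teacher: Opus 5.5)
Your proposal is correct and follows the paper's proof essentially step for step. Both arguments use the reversal matrix anticommuting with $A_{2r}$, the symmetric/antisymmetric orthonormal bases that bring $A_{2r}$ to block form with off-diagonal blocks $B_r$ and $-B_r^{\mathsf T}$, determinants for the square identity, and orientation tracking for the Pfaffian. The sign count you left as ``should'' does close: the row reordering $(1,2r,2,2r-1,\dots,r,r+1)$ is even, so $\det U=(-1)^{r}(-1)^{r(r-1)/2}$, and combining this with the block Pfaffian factor $(-1)^{r(r-1)/2}$ gives exactly $(-1)^r$.
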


\begin{proof}
The odd-order assertion follows from
$\det A=\det A^T=\det(-A)$.  For order $2r$, let $J$ be the reversal matrix.
Toeplitz skew-symmetry gives
\[
 J A_{2r}J=-A_{2r},
\]
so $A_{2r}$ interchanges the $+1$ and $-1$ eigenspaces of $J$.  In the
orthonormal bases
\[
 u_i=\frac{e_i+e_{2r+1-i}}{\sqrt2},\qquad
 v_i=\frac{e_i-e_{2r+1-i}}{\sqrt2},
 \qquad 1\le i\le r,
\]
a direct calculation gives
\[
 Q^TA_{2r}Q=
 \begin{pmatrix}
 0&B_r\\
 -B_r^T&0
 \end{pmatrix}.
\]
Taking determinants yields \eqref{eq:skew-toeplitz-half-square}.  The
classical identity $\det A=\operatorname{Pf}(A)^2$ for skew-symmetric
matrices goes back to Cayley; see, for example, Knuth's historical discussion
\cite{Knuth1996}.  Tracking the orientation of the displayed basis, or
applying the standard block Pfaffian formula, gives
\eqref{eq:skew-toeplitz-pf-half}.
\end{proof}

\begin{theorem}[Skew-symmetric recurrence-degree bound]\label{thm:skew}
For skew-symmetric $(m,m)$-banded Toeplitz matrices, the determinant sequence
has an annihilating recurrence of degree at most
\begin{equation}\label{eq:skeworder}
 d_{\mathrm{skew}}=2\cdot3^{m-1}.
\end{equation}
\end{theorem}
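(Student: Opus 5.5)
The plan mirrors the proof of Theorem~\ref{thm:symmetric}: count the distinct Widom characteristic values \eqref{eq:widomform} under the skew root structure. Generically the roots of $q$ are $1,-1,x_1,x_1^{-1},\ldots,x_{m-1},x_{m-1}^{-1}$. An $m$-subset $J$ chooses some of $\{1,-1\}$ and, for each reciprocal pair, one of four options (first only, second only, both, neither). These contribute $x_i^{\eps_i}$ with $\eps_i\in\{-1,0,1\}$. Hence
\[
 W_J=(-1)^ma_m\,\sigma\,x_1^{\eps_1}\cdots x_{m-1}^{\eps_{m-1}},
\]
where $\sigma=\pm1$ records whether $-1$ is chosen.

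Next comes the parity bookkeeping. Let $s=s(\eps)$ and let $k\in\{0,1,2\}$ be the number of roots chosen from $\{1,-1\}$. Then $m=s+k+2(\text{number of doubled pairs})$, so $k\equiv m-s\pmod 2$. If $m-s$ is odd, then $k=1$: exactly one of $\pm1$ is chosen, and both signs $\sigma=\pm1$ occur. If $m-s$ is even, then $k\in\{0,2\}$. The case $k=0$ gives $\sigma=+1$, and the case $k=2$ gives the factor $1\cdot(-1)$, so $\sigma=-1$. Both occur, except that $k=0$ requires $m-s\ge 0$ and $k=2$ requires $m-s\ge2$, i.e.\ $s\le m-2$. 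Since $s\le m-1$ always, $m-s\ge1$, so when $m-s$ is even we have $m-s\ge2$ and both options are again available. Thus every ternary $\eps\in\{-1,0,1\}^{m-1}$, with no parity restriction, yields at most two values $\pm(-1)^ma_m x^{\eps}$. The set of possible characteristic values therefore has size at most $2\cdot3^{m-1}$.

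Taking $\prod(t-W)$ over these candidate values gives an annihilator of degree $2\cdot3^{m-1}$ on the generic simple-root locus. Because the annihilator's coefficients are polynomials in the parameters, the result extends to all skew specializations. For that last step I would form the polynomial $\prod_{\sigma,\eps}(t-\sigma(-1)^ma_mx^\eps)$. It is symmetric under $x_i\leftrightarrow x_i^{-1}$ and under permutations of the pairs, hence a polynomial in $a_1,\ldots,a_m$. The main obstacle is justifying that this polynomial still annihilates when roots collide. I would handle that as in Theorem~\ref{thm:reciprocal}: the degree-$d$ Widom annihilator $\chi$ always annihilates $D_n$, and on the generic locus the sequence only uses modes among our $2\cdot3^{m-1}$ values. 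Their product therefore annihilates generically, and the annihilation identities $\sum_k c_kD_{n+k}=0$ are polynomial in the parameters, so they persist on the Zariski closure.
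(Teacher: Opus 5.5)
Your proposal is correct and is essentially the paper's proof: each of the $m-1$ reciprocal pairs contributes an exponent in $\{-1,0,1\}$, the roots $1$ and $-1$ contribute only an overall sign, and the product of $t-W$ over the at most $2\cdot3^{m-1}$ resulting candidate values annihilates $(D_n)$. Your parity bookkeeping, which shows that every pair $(\sigma,\eps)$ is realized, is not needed for this upper bound; the paper uses it only in the sharpness theorem. Your extension off the generic locus is extra care that the paper leaves implicit, but the step ``symmetric, hence polynomial in $a_1,\ldots,a_m$'' needs one more remark. The $k$-th coefficient is $a_m^k$ times a symmetric function of degree at most $k$ in each $x_i+x_i^{-1}$, so the factor $a_m^k$ clears the denominators $a_m$; alternatively, on the genuine-band locus $a_m\ne0$, rationality of the coefficients already suffices.
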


\begin{proof}
For each of the $m-1$ reciprocal pairs, the product of a selected subset
contributes an exponent in $\{-1,0,1\}$ exactly as above.  The two additional
roots $1$ and $-1$ can only contribute an overall sign.  Thus every Widom
characteristic factor belongs to
\[
 \left\{
 \pm(-1)^ma_m x_1^{\eps_1}\cdots x_{m-1}^{\eps_{m-1}}:
 \eps_i\in\{-1,0,1\}
 \right\},
\]
which has at most $2\cdot3^{m-1}$ elements.  Their product polynomial
annihilates the determinant sequence.
\end{proof}

\subsection{Skew fixed-ambient parity and Hodge two-step compression}
\label{subsec:skew-transfer-compression}

The skew case also admits a transfer explanation of the power-of-three bound,
but an important distinction from the symmetric case is needed.  If all
boundary minors are taken inside one fixed skew-symmetric ambient section,
transposition gives immediate sign relations and a half-dimensional coordinate
count.  The normalized row-column transfer, however, represents different
exchange levels using different ambient Toeplitz sections.  Therefore those
fixed-ambient identities do not define a levelwise quotient of the normalized
one-step row-column state vector.  The canonical half-dimensional reduction
instead lives in the compound realization, where it is the middle Hodge
splitting.

For the scalar sequence alone, the even-subsequence order bound can also be
read off from odd-order vanishing together with the even annihilator of the full
sequence.  The theorem below is retained for a different reason: it identifies
the transfer-level mechanism behind that halving, namely the Hodge decomposition
and the ternary spectrum of the two-step compound transfer.

\begin{theorem}[Skew fixed-ambient parity and Hodge two-step compression]
\label{thm:skew-transfer-compression}
Assume characteristic different from $2$, $a_0=0$, and
$a_{-s}=-a_s$ for $1\le s\le m$.

For one fixed ambient $n\times n$ skew-symmetric Toeplitz section, the
level-$j$ complementary boundary minors satisfy
\begin{equation}\label{eq:skew-boundary-transpose}
 M^{(n,j)}_{B,A}=(-1)^{n-j}M^{(n,j)}_{A,B}.
\end{equation}
Hence their fixed-ambient coordinate span has dimension at most
\begin{equation}\label{eq:skew-level-state-count}
 K^{(n)}_{m,j}
 =\frac12\left(\binom mj^2+(-1)^{n-j}\binom mj\right)
\end{equation}
at level $j$, and
\begin{equation}\label{eq:skew-half-state-count}
 \sum_{j=0}^m K^{(n)}_{m,j}
 =\frac12\binom{2m}{m}.
\end{equation}
This is a statement about complementary minors in a common ambient section;
it is not a half-dimensional quotient of the normalized one-step row-column
transfer.

On the generic simple-root locus, write
\[
 q(z)=a_m(z^2-1)\prod_{i=1}^{m-1}(z-x_i)(z-x_i^{-1}).
\]
For the compound realization
\[
 T=(-1)^ma_m\,\bigwedge^m C_q,
\]
the middle exterior space has the Hodge decomposition
\begin{equation}\label{eq:skew-hodge-halves}
 \bigwedge^mV=H_+\oplus H_-,
 \qquad
 \dim H_+=\dim H_-=\frac12\binom{2m}{m}.
\end{equation}
The one-step compound transfer exchanges $H_+$ and $H_-$, while $T^2$
preserves each half.  On either half the two-step transfer has eigenvalues
\begin{equation}\label{eq:skew-two-step-eigenvalue}
 \mu_\eps
 =a_m^2x_1^{2\eps_1}\cdots x_{m-1}^{2\eps_{m-1}},
 \qquad
 \eps\in\{-1,0,1\}^{m-1}.
\end{equation}
If $s=s(\eps)$ and $h=m-1-s$, then the multiplicity of
$\mu_\eps$ in either Hodge half is
\begin{equation}\label{eq:skew-two-step-multiplicity}
 \binom{h}{\lfloor h/2\rfloor}.
\end{equation}
Consequently the two-step compound transfer has minimal-polynomial degree at
most $3^{m-1}$ on either Hodge half, generically exactly $3^{m-1}$; the
one-step compound transfer is annihilated by a polynomial of degree at most
$2\cdot3^{m-1}$, generically with that minimal-polynomial degree.
\end{theorem}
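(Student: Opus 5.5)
The plan has two independent parts: the fixed-ambient transpose identity, which is elementary, and the compound/Hodge statement, which is spectral.

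\textbf{Fixed-ambient part.} A level-$j$ complementary boundary minor $M^{(n,j)}_{A,B}$ is the determinant of the $(n-j)\times(n-j)$ submatrix of $A_n$ with row set indexed by $A$ and column set indexed by $B$ deleted; its cofactor sign is symmetric in the two labels. Transposing the submatrix gives the submatrix of $A_n^{\mathsf T}=-A_n$ with the roles of $A$ and $B$ exchanged. Since $\det(-Y)=(-1)^{n-j}\det Y$ for a matrix of order $n-j$, this gives \eqref{eq:skew-boundary-transpose}.
\begin{itemize}[leftmargin=*]
\item Off-diagonal pairs $A\ne B$ form $\tfrac12\bigl(\binom mj^2-\binom mj\bigr)$ two-element orbits of the swap, each contributing at most one coordinate.
\item Each diagonal pair $A=B$ contributes one coordinate when $(-1)^{n-j}=1$ and is forced to vanish otherwise (this uses characteristic $\ne2$).
\end{itemize}
Together these give \eqref{eq:skew-level-state-count}. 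Summing over $j$ and using $\sum_j(-1)^{j}\binom mj=0$ for $m\ge1$ yields \eqref{eq:skew-half-state-count}. I will also note explicitly that these relations compare minors of one fixed $A_n$. The normalized transfer uses different section sizes at different levels, so no quotient of the transfer is claimed.

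\textbf{Compound part.} Over the splitting field, diagonalize $C_q$ with eigenvectors $w_+,w_-$ (eigenvalues $\pm1$) and $e_i,f_i$ (eigenvalues $x_i^{\pm1}$). Then $\bigwedge^mV$ has a basis of wedges $e_S$ with $T$-eigenvalue $\lambda_S=(-1)^ma_m\cdot(\pm1)\cdot x^{\eps(S)}$. Here the sign is $-1$ exactly when $w_-\in S$, and $\eps(S)$ is the ternary weight read off from the reciprocal pairs as in the proof of Theorem~\ref{thm:symmetric-primitive}.

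The key construction is a linear involution $\iota$ on $\bigwedge^mV$ that permutes the basis wedges up to scalars and satisfies $\iota T\iota^{-1}=-T$. I would build it as follows.
\begin{itemize}[leftmargin=*]
\item If $S$ contains exactly one of $w_\pm$, swap $w_+\leftrightarrow w_-$.
\item If $S$ contains both or neither of $w_\pm$, toggle the pair $\{w_+,w_-\}$ together with one reciprocal pair $\{e_i,f_i\}$ whose status is the opposite of ``both'' or ``neither''. This keeps $|S|=m$ and the weight $\eps$, and multiplies $\lambda_S$ by $-1$.
\end{itemize}
Set $H_\pm=\ker(\iota\mp1)$. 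Then $T$ exchanges $H_+$ and $H_-$ and $T^2$ preserves each. I expect to realize this $\iota$ as a Hodge-type star for a bilinear form adapted to the eigenbasis, so that it is canonical and independent of choices.

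\textbf{Spectrum and multiplicities.} The $T^2$-eigenvalues are $\lambda_S^2=\mu_\eps$. For a given $\eps$ with $h=m-1-s$ zero coordinates, let $k$ be the number of doubled reciprocal pairs and $t\in\{0,1,2\}$ the number of $w_\pm$ used, so that $2k+t=h+1$.
\begin{itemize}[leftmargin=*]
\item If $h$ is even, then $t=1$ and $k=h/2$, with two choices of $w_\pm$. This gives $2\binom{h}{h/2}$ basis wedges.
\item If $h$ is odd, then $t\in\{0,2\}$ with $k=(h\pm1)/2$. This gives $\binom{h}{(h+1)/2}+\binom{h}{(h-1)/2}=2\binom h{\lfloor h/2\rfloor}$ basis wedges.
\end{itemize}
Since $\iota$ pairs these wedges, half of each weight space lies in each Hodge half, which is \eqref{eq:skew-two-step-multiplicity}. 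Counting the $3^{m-1}$ ternary vectors $\eps$ bounds the minimal polynomial of $T^2$ on each half by $3^{m-1}$. The one-step transfer is annihilated by $\prod_\eps(t^2-\mu_\eps)$, of degree $2\cdot3^{m-1}$.

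For generic exactness, take $x_i=\tau^{3^{i-1}}$ with $\tau>1$. Balanced-ternary uniqueness makes the $\mu_\eps$ pairwise distinct, and hence the $\pm a_m x^\eps$ are $2\cdot3^{m-1}$ distinct eigenvalues of $T$. This is the same device used in Theorem~\ref{thm:symmetric-primitive}.

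\textbf{Main obstacle.} The hardest step is making $\iota$ well defined and involutive in the ``both/neither'' case, where the choice of which reciprocal pair to toggle must be made coherently. My first attempt is to define $\iota$ instead as a Hodge star relative to a symmetric form pairing $w_+$ with $w_-$ and each $e_i$ with $f_i$, and then check directly that it anticommutes with $\bigwedge^m C_q$ because $C_q$ scales that form by $-1$. If that fails, I will fall back on a noncanonical but valid splitting into $H_\pm$ obtained from the negation-symmetric spectrum of $T$ with equal multiplicities, which still proves every asserted count.
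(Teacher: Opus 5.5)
\paragraph{Verdict: genuine gap in the compound/Hodge part.} Your fixed-ambient argument, the eigen-wedge count $2\binom{h}{\lfloor h/2\rfloor}$ for each $\mu_\eps$, and the separated-root exactness argument all agree with the paper. The gap is the step that produces $H_\pm$ and shows that $T$ exchanges them.

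\paragraph{What fails.} As stated, $\iota$ is not well defined, because a single-pair toggle is not coherent. For $m=4$, $\eps=0$, a smallest-index rule sends both $e_1\wedge f_1\wedge e_3\wedge f_3$ and $e_2\wedge f_2\wedge e_3\wedge f_3$ to $w_+\wedge w_-\wedge e_3\wedge f_3$, so it is not even injective.

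Your proposed repair rests on a false premise. Take a symmetric form with $\beta(w_+,w_-)\ne0$ and $\beta(e_i,f_i)\ne0$. Then $C_q$ does not scale $\beta$ by $-1$. It negates the $w_+w_-$ pairing (eigenvalue product $1\cdot(-1)$) but preserves each $e_if_i$ pairing ($x_ix_i^{-1}=1$), so it is not a similitude of $\beta$ at all. Worse, for that form $(w_+,w_-)$ is a hyperbolic pair, and the Hodge star fixes singly selected vectors. It therefore sends a wedge containing $w_+$ alone to a multiple of a wedge still containing $w_+$ alone, and on such wedges it commutes with $T$ instead of anticommuting.

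Your fallback, matching the equal-dimensional $\lambda$ and $-\lambda$ eigenspaces, does recover every dimension and degree count. However, it gives a noncanonical splitting. It does not prove the asserted fact that the \emph{Hodge} halves are the ones $T$ exchanges.

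\paragraph{The missing idea.} What you need is orientation reversal for an isometry, which is the paper's argument. Use the form with $w_+,w_-$ orthogonal and non-isotropic: $\beta(w_\pm,w_\pm)=1$, $\beta(w_+,w_-)=0$, $\beta(e_i,f_j)=\delta_{ij}$, and all other basis pairings zero.
\begin{itemize}
\item Because $(\pm1)^2=1$ and $x_ix_i^{-1}=1$, $C_q$ is an isometry of $\beta$.
\item $\det C_q$ is the product of the roots, $1\cdot(-1)\cdot\prod_i x_ix_i^{-1}=-1$.
\item For every isometry $g$ one has $(\bigwedge^m g)\circ{*}=\det(g)\,{*}\circ(\bigwedge^m g)$ on $\bigwedge^mV$.
\end{itemize}
Hence $T$ anticommutes with the Hodge star and interchanges its two eigenspaces $H_\pm$. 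Rescale $*$ so that $*^2=1$, adjoining a square root if necessary.

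In your eigen-wedge basis this star sends $e_S$ to a nonzero multiple of $e_{S'}$, where $S'$ is obtained as follows. On $\{w_+,w_-\}$ it swaps $w_+\leftrightarrow w_-$ if exactly one occurs, and otherwise exchanges ``both'' and ``neither''. Simultaneously, it exchanges ``both'' and ``neither'' on \emph{every} zero-weight reciprocal pair, i.e.\ it complements the doubled pairs, $k\mapsto h-k$. That simultaneous complementation is the coherent rule your single toggle lacked.

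With it in place, the multiplicity $\binom{h}{\lfloor h/2\rfloor}$ in each half follows. You can use your pairing argument, or the paper's observation that the invertible $T$ maps the $\mu_\eps$-eigenspace in $H_+$ isomorphically onto the one in $H_-$.
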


\begin{proof}
For the fixed-ambient statement, write a level-$j$ complementary minor as
\[
 M^{(n,j)}_{A,B}=\det A_n[A^c,B^c],
\]
where the complementary matrix has order $n-j$.  Since
$A_n^{\mathsf T}=-A_n$,
\[
 A_n[B^c,A^c]
 =-\bigl(A_n[A^c,B^c]\bigr)^{\mathsf T},
\]
and taking determinants gives \eqref{eq:skew-boundary-transpose}.  If
$d_j=\binom mj$, the fixed-ambient level-$j$ matrix of complementary minors
is symmetric when $n-j$ is even and skew-symmetric when $n-j$ is odd.  Its
independent coordinates therefore number $d_j(d_j+1)/2$ or
$d_j(d_j-1)/2$, proving \eqref{eq:skew-level-state-count}.  Summing over $j$
and using
\[
 \sum_{j=0}^m\binom mj^2=\binom{2m}{m},
 \qquad
 \sum_{j=0}^m(-1)^j\binom mj=0
\]
proves \eqref{eq:skew-half-state-count}.

This count must not be confused with a direct coordinate quotient of the
normalized row-column transfer recalled in
Proposition~\ref{prop:state-recall}.  Normalization represents different
exchange levels by minors of different ambient Toeplitz sections; at one
transfer time the relevant minors therefore do not share the same ambient
size $n$.  The sign $(-1)^{n-j}$ in
\eqref{eq:skew-boundary-transpose} consequently cannot be imposed level by
level on the normalized state vector to obtain an invariant one-step
half-space.

For the autonomous reduction, use the compound realization from the companion
paper.  Over the splitting field choose eigenvectors of $C_q$ with
eigenvalues
\[
 1,-1,x_1,x_1^{-1},\ldots,x_{m-1},x_{m-1}^{-1}.
\]
These reciprocal pairs, together with the self-reciprocal eigenvalues $1$
and $-1$, admit a nondegenerate symmetric bilinear form preserved by $C_q$.
Since the product of all roots is $-1$, $C_q$ is orientation reversing for
this orthogonal form.  In middle exterior degree, the standard Hodge
splitting \eqref{eq:skew-hodge-halves} is therefore exchanged by $C_q$; see
\cite{FultonHarris1991} for the middle exterior decomposition of the even
orthogonal representation.  Hence $T^2$ preserves each Hodge half.

Fix $\eps\in\{-1,0,1\}^{m-1}$ and put
$s=s(\eps)$, $h=m-1-s$.  A middle-degree wedge giving this
squared weight selects exactly one member of each of the $s$ indicated
reciprocal pairs.  Among the remaining $h$ reciprocal pairs it selects both
members from some pairs and neither from the others; the roots $1$ and $-1$
complete the total degree.  If $h$ is even, exactly one of $1,-1$ is selected
and there are $2\binom{h}{h/2}$ such wedges.  If $h$ is odd, either both or
neither of $1,-1$ are selected, giving
\[
 \binom{h}{(h-1)/2}+\binom{h}{(h+1)/2}
 =2\binom{h}{\lfloor h/2\rfloor}
\]
wedges.  Thus the $\mu_\eps$-eigenspace of $T^2$ in the full middle exterior
power has dimension $2\binom{h}{\lfloor h/2\rfloor}$.  Because $T$ is
invertible and exchanges $H_+$ and $H_-$ while commuting with $T^2$, it
identifies the two $\mu_\eps$-eigenspaces.  Each Hodge half therefore has
multiplicity \eqref{eq:skew-two-step-multiplicity}.

The factor $(-1)^ma_m$ in $T$ disappears after squaring, and the special
roots $1,-1$ also square to $1$, which gives
\eqref{eq:skew-two-step-eigenvalue}.  There are $3^{m-1}$ such ternary
weights, so the two-step minimal polynomial has degree at most $3^{m-1}$.
Taking $x_i=t^{3^{i-1}}$ with $t>1$ makes the values
\eqref{eq:skew-two-step-eigenvalue} pairwise distinct by balanced-ternary
uniqueness, proving generic equality for the two-step compound transfer.  If
$R$ is its minimal polynomial, then $R(T^2)=0$, so $R(t^2)$ annihilates the
one-step compound transfer and has degree $2\cdot3^{m-1}$.  At the same
separated-root specialization the corresponding one-step modes occur in
opposite-sign pairs, giving generic equality for the one-step compound
minimal-polynomial degree as well.
\end{proof}

\begin{corollary}[Hodge multiplicities versus two-step modes]
\label{cor:skew-weight-identity}
For either Hodge half,
\begin{equation}\label{eq:skew-half-weight-sum}
 \frac12\binom{2m}{m}
 =\sum_{s=0}^{m-1}
   \binom{m-1}{s}2^s
   \binom{m-1-s}{\lfloor(m-1-s)/2\rfloor}.
\end{equation}
Thus the skew-symmetric compound reduction has the two stages
\[
 \binom{2m}{m}
 \longrightarrow \frac12\binom{2m}{m}
 \longrightarrow 3^{m-1}
\]
for the two-step system, while the full one-step system has
$2\cdot3^{m-1}$ generic autonomous modes.  The first arrow is the canonical
middle Hodge splitting; the second forgets multiplicities of equal squared
weights.  The fixed-ambient boundary-minor count
\eqref{eq:skew-half-state-count} has the same half-binomial size, but it is
not a direct quotient of the normalized row-column transfer.
\end{corollary}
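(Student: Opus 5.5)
The plan is to read identity \eqref{eq:skew-half-weight-sum} off the weight decomposition already established in the proof of Theorem~\ref{thm:skew-transfer-compression}. We count the dimension of one Hodge half in two ways. First, \eqref{eq:skew-hodge-halves} gives $\dim H_+=\frac12\binom{2m}{m}$. Second, on the generic simple-root locus $T^2$ is diagonalizable on $H_+$, because it is the square of a compound of a diagonalizable matrix. Hence $H_+$ is the direct sum of its $\mu_\eps$-eigenspaces. One caveat: at a generic point the eigenvalues $\mu_\eps$ are pairwise distinct, for instance at the balanced-ternary specialization $x_i=t^{3^{i-1}}$. So the eigenspace for $\mu_\eps$ coincides with the weight space labelled by $\eps$, and its dimension is the multiplicity \eqref{eq:skew-two-step-multiplicity}.

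Next we sum over $\eps\in\{-1,0,1\}^{m-1}$. We group the vectors by their support size $s=s(\eps)$. There are $\binom{m-1}{s}$ ways to choose the support and $2^s$ ways to choose the signs on it. Each such $\eps$ has $h=m-1-s$ and contributes $\binom{h}{\lfloor h/2\rfloor}$. This yields exactly the right-hand side of \eqref{eq:skew-half-weight-sum}, and equating the two counts proves the identity. The identity is purely numerical, so establishing it at one generic point suffices.

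As an independent sanity check, we can also derive the identity without the Hodge halves. Doubling it gives $\binom{2m}{m}=\sum_s\binom{m-1}{s}2^s\cdot 2\binom{h}{\lfloor h/2\rfloor}$. This is the direct count of $m$-subsets of the $2m$ roots $\{1,-1,x_i^{\pm1}\}$, sorted by their squared-weight vector. The proof of Theorem~\ref{thm:skew-transfer-compression} already carries out that count, including the parity split on $h$ that decides how $\pm1$ are selected. So the identity holds even without appealing to the equal splitting across $H_\pm$, and halving gives \eqref{eq:skew-half-weight-sum}.

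The remaining assertions of the corollary are interpretive. The first arrow $\binom{2m}{m}\to\frac12\binom{2m}{m}$ is the restriction to one Hodge half. The second arrow $\to 3^{m-1}$ counts each distinct $\mu_\eps$ once in the minimal polynomial, exactly as in Theorem~\ref{thm:skew-transfer-compression}, which also gives the generic one-step count $2\cdot 3^{m-1}$. The comparison with \eqref{eq:skew-half-state-count} is just the numerical coincidence of sizes, together with the caveat already proved in the theorem.

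The only delicate step is matching eigenspaces with weight spaces when eigenvalues coincide. Working at the separated-root specialization avoids this entirely, so I expect no real obstacle; the main care is in the parity bookkeeping for $h$.
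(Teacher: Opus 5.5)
Your proposal is correct and follows the paper's route. The paper gives no separate proof of this corollary: it is read off from Theorem~\ref{thm:skew-transfer-compression} exactly as you do, by summing the Hodge-half multiplicities \eqref{eq:skew-two-step-multiplicity} over $\eps\in\{-1,0,1\}^{m-1}$, grouped by support size $s$, against $\dim H_+=\frac12\binom{2m}{m}$. Your second check is also valid: counting all $m$-subsets of the $2m$ roots by squared-weight vector gives $\binom{2m}{m}$, and halving proves the numerical identity without using the equal Hodge splitting.
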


\begin{remark}[First skew multiplicity drops]
\label{rem:skew-first-drops}
The first values are
\[
\begin{array}{c|ccc}
 m&\binom{2m}{m}&\frac12\binom{2m}{m}&3^{m-1}\\ \hline
 1&2&1&1\\
 2&6&3&3\\
 3&20&10&9\\
 4&70&35&27\\
 5&252&126&81
\end{array}
\]
so the first genuine two-step multiplicity collapse occurs at $m=3$.  The
middle column is the dimension of either Hodge half (and also the count of
independent fixed-ambient complementary minors).  The full skew compound
transfer correspondingly has autonomous degree $2\cdot3^{m-1}$, while the
even-size scalar subsequence is governed by the $3^{m-1}$ squared modes.
Generic scalar minimality of these bounds is proved independently below by
the Widom--Hankel argument.
\end{remark}

\subsection{Algorithmic realization}\label{subsec:symmetry-aware-algorithm}

The symmetric state-space reduction can be implemented directly as a wrapper
around the row-column construction recalled in
Proposition~\ref{prop:state-recall}: symmetric boundary minors are
straightened as soon as they are generated.  The skew case requires a
different implementation.  The fixed-ambient transpose identity
\eqref{eq:skew-boundary-transpose} is useful structurally, but its parity sign
is not compatible with a levelwise quotient of the normalized row-column
state vector.  For skew software one therefore keeps the full specialized
row-column transfer, squares it, and performs the Krylov/minimal-polynomial
calculation on the resulting two-step evolution.

\begin{quote}
\small
\textbf{Pseudocode:} \textsc{SymmetryAwareRowColumn}$(m,\mathrm{type})$.

\textbf{Input:} semibandwidth $m$ and a balanced Toeplitz symbol satisfying
$\mathrm{type}\in\{\mathrm{symmetric},\mathrm{skew}\}$.

\textbf{Output:} an annihilating polynomial for the determinant sequence and,
when requested, the corresponding reduced autonomous transfer information.

\begin{enumerate}[leftmargin=*,label=\textbf{\arabic*.}]
\item Generate the reachable normalized row-column signatures
      $\Sigma(A,B)$ level by level, exactly as in the unrestricted
      row-column construction, and assemble the specialized determinant
      transfer $Q$.

\item If $\mathrm{type}=\mathrm{symmetric}$, then whenever a level-$j$
      boundary minor is generated, straighten it immediately to the doset
      basis $I\preceq J$ from
      Theorem~\ref{thm:symmetric-catalan-straightening}.  Assemble the
      resulting transfer $S_m$ on the $C_{m+1}$ Catalan states.

\item Compute the minimal polynomial $M_S(t)$ of $S_m$ (or obtain it by a
      Krylov calculation).  Every determinant coordinate is annihilated by
      $M_S$.  For generic symmetric parameters,
      \[
        \deg M_S=\frac{3^m+1}{2}.
      \]

\item If $\mathrm{type}=\mathrm{skew}$, specialize the full normalized
      row-column transfer to $a_0=0$ and $a_{-s}=-a_s$.  Do \emph{not}
      quotient its normalized signatures level by level using
      \eqref{eq:skew-boundary-transpose}.  Instead form the two-step transfer
      \[
        U=Q^2.
      \]

\item For the scalar determinant recurrence, apply a principal-coordinate
      Krylov calculation directly to $U$.  Let $R_{\rm sc}(y)$ be the
      resulting annihilating polynomial for the even subsequence
      $E_k=D_{2k}$.  Since $D_{2k+1}=0$,
      \[
        R_{\rm sc}(t^2)
      \]
      annihilates the full sequence.  Generically,
      \[
        \deg R_{\rm sc}=3^{m-1},\qquad
        \deg R_{\rm sc}(t^2)=2\cdot3^{m-1}.
      \]
      If a full transfer annihilator is desired instead, compute the minimal
      polynomial of $U$ and substitute $y=t^2$; this always gives a valid
      one-step transfer annihilator, though the scalar Krylov route is the
      quantity needed for the determinant recurrence.

\item For a special parameter specialization, return the actually computed
      polynomial; its degree may be smaller than the generic value.
\end{enumerate}
\end{quote}

Implementations accompanying these algorithms are maintained at
\url{https://github.com/maxale/matrix_codes}.
The same repository also contains the implementations accompanying the
companion recurrence paper \cite{AlekseyevKhomovskyRecurrences2026}.

\subsection{Generic sharpness}

The bounds above are in fact the generic minimal degrees.  The following
standard factorization isolates the only possible obstruction.

\begin{lemma}[Hankel--Vandermonde criterion]\label{lem:hankel-vandermonde}
Let
\[
 u_n=\sum_{\nu=1}^N A_\nu\lambda_\nu^n
\]
with pairwise distinct $\lambda_1,\ldots,\lambda_N$.  Then
\begin{equation}\label{eq:hankel-vandermonde}
 \det\bigl(u_{i+j}\bigr)_{0\le i,j<N}
 =\left(\prod_{\nu=1}^N A_\nu\right)
  \prod_{1\le\mu<\nu\le N}(\lambda_\nu-\lambda_\mu)^2.
\end{equation}
In particular, if every $A_\nu$ is nonzero, then the minimal homogeneous
constant-coefficient recurrence for $(u_n)$ has degree $N$.
\end{lemma}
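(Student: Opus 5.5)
The plan is to factor the Hankel matrix through a Vandermonde matrix. Put $V=(\lambda_\nu^{\,i})_{0\le i<N,\,1\le\nu\le N}$ and $\Delta=\operatorname{diag}(A_1,\ldots,A_N)$. The $(i,j)$ entry of $V\Delta V^{\mathsf T}$ is
\[
 \sum_\nu \lambda_\nu^i A_\nu\lambda_\nu^j=u_{i+j},
\]
so $(u_{i+j})_{0\le i,j<N}=V\Delta V^{\mathsf T}$. Taking determinants gives $\det(V)^2\prod_\nu A_\nu$, and the classical Vandermonde evaluation $\det V=\prod_{\mu<\nu}(\lambda_\nu-\lambda_\mu)$ yields \eqref{eq:hankel-vandermonde}. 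This part is routine and involves no sign issues, because the Vandermonde determinant enters squared.

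For the minimality assertion, first note that an annihilator of degree $N$ exists: $\prod_\nu(t-\lambda_\nu)$ kills each exponential $\lambda_\nu^n$, so the minimal degree is at most $N$.

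For the lower bound, suppose a homogeneous recurrence $\sum_{k=0}^{d}c_ku_{n+k}=0$ of degree $d<N$ holds with $c_d\ne0$. Pad the coefficients with zeros to obtain a nonzero vector $c=(c_0,\ldots,c_d,0,\ldots,0)\in$ the span of the first $N$ coordinates. Then for every $0\le i<N$,
\[
 \sum_{j}u_{i+j}c_j=0,
\]
so $c$ lies in the kernel of the $N\times N$ Hankel matrix. This makes the Hankel determinant vanish.

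By \eqref{eq:hankel-vandermonde}, however, that determinant is nonzero when all $A_\nu\ne0$ and the $\lambda_\nu$ are distinct. This contradiction shows the minimal degree is exactly $N$.

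The only point needing care is that a recurrence of degree $d$ uses only the indices $n\ge0$ with $n+d$ in range. The kernel vector argument requires the relation only for $n=i\le N-1$, which is covered. Equivalently, the minimal polynomial of the sequence divides $\prod(t-\lambda_\nu)$, and must contain every factor whose $A_\nu\ne0$. I expect no real obstacle here; the content of the lemma is entirely the factorization.
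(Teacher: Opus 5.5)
Your proposal is correct and follows the paper's proof. The paper uses the same factorization $(u_{i+j})=V\,\operatorname{diag}(A_1,\ldots,A_N)V^{\mathsf T}$ with the squared Vandermonde determinant, and the same argument that a recurrence of degree less than $N$ would make the Hankel matrix singular. Your padded-coefficient kernel vector simply spells out the paper's brief ``Hankel rank at least $N$'' step.
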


\begin{proof}
Let $V=(\lambda_\nu^i)_{0\le i<N,\,1\le\nu\le N}$.  The Hankel matrix
factors as
\[
 (u_{i+j})_{0\le i,j<N}
 =V\,\operatorname{diag}(A_1,\ldots,A_N)V^{\mathsf T}.
\]
Taking determinants and using the Vandermonde determinant gives
\eqref{eq:hankel-vandermonde}.  If the determinant is nonzero, the Hankel
rank is at least $N$, so no recurrence of degree less than $N$ can annihilate
the sequence; the displayed exponential representation gives one of degree
$N$.
\end{proof}

\begin{theorem}[Generic sharpness]\label{thm:sharp}
Within each of the symmetric and skew-symmetric parameter spaces, let $H$
denote the Hankel determinant of the predicted size from
Lemma~\ref{lem:hankel-vandermonde}.  Then $H$ is a nonzero polynomial in the
diagonal parameters, and at every parameter point with $H\ne0$ the minimal
recurrence degrees in Theorems~\ref{thm:symmetric} and \ref{thm:skew} are
respectively
\[
 \frac{3^m+1}{2}
 \qquad\text{and}\qquad
 2\cdot3^{m-1}.
\]
Equivalently, these degrees are generic: a drop can occur only on the proper
algebraic set $H=0$.
\end{theorem}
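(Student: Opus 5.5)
The plan is to apply Lemma~\ref{lem:hankel-vandermonde} to the Widom expansion \eqref{eq:widomform}, after grouping equal characteristic values. In the symmetric case, set $N=(3^m+1)/2$. For each admissible $\eps$ in \eqref{eq:primitive-weight-set}, collect all $m$-subsets $J$ with $W_J=\lambda_\eps$ and write
\[
 D_n=\sum_\eps A_\eps\lambda_\eps^n,\qquad A_\eps=\sum_{J\mapsto\eps}C_J .
\]
In the skew case the sequence is odd-vanishing. I would take $N=2\cdot3^{m-1}$ with the values $\pm(-1)^ma_mx^\eps$, or equivalently work with $E_k=D_{2k}$, $N=3^{m-1}$ and the squared modes $\mu_\eps$; either reduction recovers the full count via $R(t^2)$.

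The Hankel determinant $H=\det(D_{i+j})_{0\le i,j<N}$ is a polynomial in the diagonal parameters, since each $D_n$ is. By \eqref{eq:hankel-vandermonde} it equals $\prod A_\eps$ times the squared Vandermonde, as an identity on the simple-root locus. Hence $H\ne0$ at a point forces minimal degree $\ge N$, and Theorems~\ref{thm:symmetric}/\ref{thm:skew} give $\le N$. Conversely, wherever the minimal degree is $N$ the Hankel rank is $N$, so $H\neq0$. This handles the ``at every point with $H\ne0$'' clause, and also the skew even/odd indexing, since the odd terms vanish and the Hankel matrix of the full sequence is a checkerboard whose determinant is a product of two even-subsequence Hankel determinants. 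It therefore suffices to show that $H$ is not identically zero, that is, to exhibit one parameter point where the modes are distinct and every aggregated amplitude $A_\eps$ is nonzero.

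Distinctness is already available from the separated specialization $x_i=t^{3^{i-1}}$ used in Theorem~\ref{thm:symmetric-primitive} and Theorem~\ref{thm:skew-transfer-compression}. The main obstacle is the non-cancellation $A_\eps\neq0$, because several $C_J$ merge into one $A_\eps$ (Catalan-many in the symmetric case). I would use Widom's explicit coefficients
\[
 C_J=\prod_{j\in J}z_j^{m}\,\prod_{j\in J,\,k\notin J}(z_j-z_k)^{-1}
\]
(up to the companion paper's normalization) and pass to an asymptotic regime. Taking $x_1\gg x_2\gg\cdots\gg x_m\gg1$, e.g.\ $x_i=t^{3^{m-i}}$ with $t\to\infty$, each $C_J$ becomes a signed monomial in $t$ to leading order. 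Among the subsets $J$ realizing a given $\eps$, I would show that a unique one attains the maximal $t$-exponent: choose, for the zero coordinates, the doubled/empty pattern determined lexicographically by the dominance order, in the spirit of the doset/Catalan uniqueness in Theorem~\ref{thm:symmetric-catalan-straightening}. Its leading monomial then cannot cancel, and $A_\eps\sim c\,t^{e}$ with $c\neq0$.

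If that exponent bookkeeping becomes unwieldy, my fallback is a direct check. Since $H$ is one polynomial and nonvanishing at a single point suffices, an inductive argument on $m$ may work, using the factorization $\prod_i$ over pairs as the pairs decouple in the separated limit. Alternatively, for the skew case one can use Proposition~\ref{prop:skew-toeplitz-square} to reduce to the symmetric-type Toeplitz-minus-Hankel determinant $\det B_r$ and its own reciprocal-root expansion. I expect this amplitude non-cancellation step to be the substantive part; the rest is formal.
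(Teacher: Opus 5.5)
\paragraph{Where the proposal breaks.} Your framework is the paper's. You group the Widom terms by mode, note that $H$ is a polynomial in the diagonal parameters, and evaluate at one separated point where balanced-ternary uniqueness makes the modes distinct, so everything hinges on $A_\eps\neq0$. The mechanism you propose for that step is false: a unique dominant $J$ never exists once the group contains a doubled pair.

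To see this, put $x_i=t^{E_i}$, let $Z$ be the zero coordinates of $\eps$, and let $S\subset Z$ be the $r=(m-s)/2$ pairs doubled by $J$.
\begin{itemize}
\item Each factor $z_j-z_k$ of $\Delta_J^{\times}$ is asymptotic to $\pm t^{\max(e_j,e_k)}$ with $e_j=\log_t z_j$, and the numerator $\prod_{j\in J}z_j^m$ is constant on the group.
\item The cross factors between a singly selected pair $i$ and a zero pair $k$ have total exponent $\max(E_i,E_k)-\eps_iE_i$ if $k\in S$ and $\max(E_i,E_k)+\eps_iE_i$ if $k\notin S$. Summed over $k\in Z$ this is constant, because exactly $r$ zero pairs are doubled and $r$ are empty.
\item The only varying part of the exponent is therefore $2\sum_{i\in S,\,k\in Z\setminus S}\max(E_i,E_k)$. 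This is invariant under $S\leftrightarrow Z\setminus S$, so for $r\ge1$ every maximizer has a distinct twin.
\end{itemize}
Already for $m=2$, $\eps=(0,0)$ one has exactly
\[
 C_{\{x_1,x_1^{-1}\}}=C_{\{x_2,x_2^{-1}\}}=\frac{(x_1x_2)^2}{(x_1-x_2)^2(x_1x_2-1)^2}.
\]
So the leading coefficient of $A_\eps$ is a signed count of tied terms, and the limit alone cannot exclude cancellation. Your fallbacks (induction on $m$, passing to $\det B_r$) are not yet arguments.

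\paragraph{The missing idea: sign coherence.} This makes the limit unnecessary, and it is the paper's argument. At $x_i=t^{3^{i-1}}$ with $t>1$, all roots are real and positive. Order them as $x_m^{-1}<\cdots<x_1^{-1}<x_1<\cdots<x_m$ and let $J$ occupy positions $j_1<\cdots<j_m$.
\begin{itemize}
\item Exactly $m+a-j_a$ unselected roots lie to the right of the $a$-th selected one, so $\operatorname{sgn}\Delta_J^{\times}=(-1)^{\sum_a(m+a-j_a)}$.
\item A reciprocal pair occupies positions summing to $2m+1$. Within an $\eps$-group the singly selected roots and the number of doubled pairs are fixed, so $\sum_a j_a$ is constant on the group.
\item With the positive numerator, all $C_J$ in the group share one sign, so $A_\eps\neq0$.
\end{itemize}
In the skew case use the order $-1<x_{m-1}^{-1}<\cdots<x_1^{-1}<1<x_1<\cdots<x_{m-1}$, where pair positions sum to $2m+2$. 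For fixed $(\sigma,\eps)$ the choice among $1,-1$ is forced. Hence the numerator's sign, which depends only on whether $-1$ is selected, is also fixed.

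Combined with this sign analysis your asymptotic picture would work, but it then becomes superfluous.
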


\begin{proof}
For simple roots, Widom's coefficient in the balanced case may be written
\begin{equation}\label{eq:widom-coefficient-balanced}
 C_J=
 \frac{\displaystyle\prod_{j\in J}z_j^m}
 {\displaystyle\prod_{\substack{j\in J\\k\notin J}}(z_j-z_k)},
 \qquad |J|=m,
\end{equation}
see \cite{Widom1958,Alexandersson2012}.  If several subsets give the same
value of $W_J$, group their terms in \eqref{eq:widomform}; the coefficient of
a distinct mode $\lambda$ is
\[
 A_\lambda\defeq\sum_{J:W_J=\lambda}C_J.
\]
For either symmetry class the finite Hankel determinant of the corresponding
predicted size is a polynomial in the diagonal parameters, because every
$D_n$ is.  It therefore suffices to exhibit one specialization for which the
predicted number of modes are distinct and all the grouped coefficients are
nonzero.

\emph{Symmetric case.}
Fix a real number $t>1$ and take
\begin{equation}\label{eq:separated-symmetric}
 x_i=t^{3^{i-1}},\qquad
 q(z)=\prod_{i=1}^m(z-x_i)(z-x_i^{-1}).
\end{equation}
This monic polynomial is palindromic, so it defines a symmetric Toeplitz
symbol with $a_m=a_{-m}=1$.  Every admissible exponent vector
$\eps\in\{-1,0,1\}^m$ from Theorem~\ref{thm:symmetric} is realized: if $s$
coordinates are nonzero, choose both roots from exactly $(m-s)/2$ of the
zero coordinates.  Its grouped mode is
\[
 \lambda_\eps=(-1)^m t^{\sum_{i=1}^m\eps_i3^{i-1}}.
\]
These modes are pairwise distinct by uniqueness of balanced ternary
expansion.

It remains to rule out cancellation inside a group.  Order the roots as
\[
 x_m^{-1}<\cdots<x_1^{-1}<x_1<\cdots<x_m.
\]
If $J=\{j_1<\cdots<j_m\}$ denotes the positions of the selected roots in
this ordering, then
\begin{equation}\label{eq:cross-sign}
 \operatorname{sgn}
 \prod_{\substack{j\in J\\k\notin J}}(z_j-z_k)
 =(-1)^{\sum_{a=1}^m(m+a-j_a)}.
\end{equation}
Indeed, for the root in position $j_a$, exactly $m+a-j_a$ unselected roots
lie to its right.  The two positions occupied by $x_i^{-1}$ and $x_i$ sum
to $2m+1$.  Within a fixed $\eps$-group the singly selected roots are fixed,
and the number $(m-s)/2$ of doubly selected reciprocal pairs is fixed.
Hence the parity of $\sum_a j_a$, and therefore the sign in
\eqref{eq:cross-sign}, is constant throughout the group.  The numerator in
\eqref{eq:widom-coefficient-balanced} is positive.  Thus all $C_J$ in a
fixed group have the same nonzero sign, so every $A_{\lambda_\eps}$ is
nonzero.  Lemma~\ref{lem:hankel-vandermonde} now gives a nonzero Hankel
determinant of size $(3^m+1)/2$ at this specialization.

\emph{Skew-symmetric case.}
Use instead
\begin{equation}\label{eq:separated-skew}
 q(z)=(z^2-1)\prod_{i=1}^{m-1}(z-x_i)(z-x_i^{-1}),
 \qquad x_i=t^{3^{i-1}}.
\end{equation}
Then $z^{2m}q(1/z)=-q(z)$, so the associated Toeplitz symbol is
skew-symmetric.  Its candidate modes are
\[
 \lambda_{\sigma,\eps}
 =(-1)^m\sigma t^{\sum_{i=1}^{m-1}\eps_i3^{i-1}},
 \qquad
 \sigma\in\{\pm1\},\quad
 \eps\in\{-1,0,1\}^{m-1},
\]
and all $2\cdot3^{m-1}$ of them are distinct.

Every such mode is realized.  Put
$s=|\{i:\eps_i\ne0\}|$ and $h=m-s$.  If $h$ is odd, select exactly one of
$1,-1$, choosing $1$ for $\sigma=1$ and $-1$ for $\sigma=-1$, and fill the
remaining $(h-1)/2$ pairs by double selections.  If $h$ is even, select
neither of $1,-1$ for $\sigma=1$ and both for $\sigma=-1$; the remaining
roots come from respectively $h/2$ or $(h-2)/2$ doubled zero pairs.  The
latter number is nonnegative because $s\le m-1$, so even $h$ is at least
$2$.

Order the roots as
\[
 -1<x_{m-1}^{-1}<\cdots<x_1^{-1}<1<x_1<\cdots<x_{m-1}.
\]
Now the two positions of every reciprocal pair sum to $2m+2$, an even
number.  For a fixed $(\sigma,\eps)$ the choices of $1$ and $-1$, the singly
selected reciprocal roots, and the number of doubled pairs are all fixed.
Consequently the parity in \eqref{eq:cross-sign} is again constant throughout
the group.  The sign of the numerator in
\eqref{eq:widom-coefficient-balanced} depends only on whether $-1$ is
selected and is therefore fixed as well.  Hence all $C_J$ in each group
again have the same nonzero sign.  Lemma~\ref{lem:hankel-vandermonde} gives
a nonzero Hankel determinant of size $2\cdot3^{m-1}$.

In each symmetry class the relevant Hankel determinant is therefore a
nonzero polynomial in the diagonal parameters.  Its nonvanishing defines a
nonempty Zariski-open set.  On that set Lemma~\ref{lem:hankel-vandermonde}
gives the corresponding lower bound on the minimal recurrence degree, while
Theorems~\ref{thm:symmetric} and \ref{thm:skew} give the matching upper
bound.
\end{proof}

\begin{corollary}[Even-size skew-symmetric subsequence]\label{cor:skew-even}
Assume the coefficient field has characteristic different from $2$.  For a
skew-symmetric $(m,m)$-banded Toeplitz matrix,
\[
 D_{2k+1}=0.
\]
If $E_k\defeq D_{2k}$, then $(E_k)$ has a homogeneous constant-coefficient
annihilator of degree at most
\begin{equation}\label{eq:skew-even-order}
 3^{m-1}.
\end{equation}
Moreover, this degree is minimal on a nonempty Zariski-open subset of the
skew-symmetric parameter space.
\end{corollary}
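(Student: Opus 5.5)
Odd-order vanishing is the parity argument already in Proposition~\ref{prop:skew-toeplitz-square}. In characteristic different from $2$, $\det A_{2k+1}=\det A_{2k+1}^{\mathsf T}=\det(-A_{2k+1})=-\det A_{2k+1}$, so $D_{2k+1}=0$.

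For the upper bound on $(E_k)$ I would work on the Widom side. On the generic simple-root locus of the skew parameter space, Theorem~\ref{thm:skew} writes
\[
 D_n=\sum_{\sigma,\eps}A_{\sigma,\eps}\lambda_{\sigma,\eps}^n,
 \qquad
 \lambda_{\sigma,\eps}=(-1)^m\sigma\,a_m x^{\eps}.
\]
Here the $2\cdot3^{m-1}$ candidate modes occur in opposite-sign pairs $\pm\lambda_\eps$. Then
\[
 E_k=\sum_\eps\bigl(A_{+,\eps}+A_{-,\eps}\bigr)\mu_\eps^k,
 \qquad
 \mu_\eps=a_m^2x^{2\eps},
\]
which matches \eqref{eq:skew-two-step-eigenvalue}. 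So $R(y)=\prod_\eps(y-\mu_\eps)$ annihilates $(E_k)$ and has degree $3^{m-1}$.

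To avoid any genericity assumption, I would argue purely algebraically. Let $\chi(t)$ be the Widom annihilator from Theorem~\ref{thm:skew}; its roots form a set closed under negation, so $\chi(t)=R(t^2)$ with $\deg R=3^{m-1}$. Its coefficients are polynomial in the parameters, because they are symmetric functions of the $\mu_\eps$, i.e.\ invariants of the squared root multiset. The relation $R(t^2)D=0$ has only even shifts, so applied at even starting indices it is exactly $R$ annihilating $(E_k)$. Polynomial extension then covers repeated roots.

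For minimality I would reuse the specialization \eqref{eq:separated-skew} from Theorem~\ref{thm:sharp}, where the $\mu_\eps$ are pairwise distinct by balanced ternary. The main obstacle is to show that the grouped coefficient $A_{+,\eps}+A_{-,\eps}$ does not vanish, since now two Widom groups are merged. There are two ways to get this.

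\textbf{First route.} Odd vanishing gives
\[
 \sum_{\sigma,\eps}A_{\sigma,\eps}\lambda_{\sigma,\eps}^{2k+1}=0
 \quad\text{for all }k,
\]
that is, $\sum_\eps\lambda_{+,\eps}(A_{+,\eps}-A_{-,\eps})\mu_\eps^k=0$. Distinctness of the $\mu_\eps$ forces $A_{+,\eps}=A_{-,\eps}$. Hence $A_{+,\eps}+A_{-,\eps}=2A_{+,\eps}$, which is nonzero by the proof of Theorem~\ref{thm:sharp}.

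\textbf{Second route.} Alternatively, the Hankel matrix of $D$ of size $2\cdot3^{m-1}$ is nonsingular there. After a permutation it is block diagonal, with blocks being the even-subsequence Hankel matrix $(E_{i+j})$ and the odd one, which is zero. This contradiction shows the first route is the right framing: I would instead interleave indices so that the blocks are $(D_{2i+2j})$ and $(D_{2i+2j+2})$.

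I would present the first route. It then yields a nonzero $3^{m-1}$ Hankel determinant for $(E_k)$ via Lemma~\ref{lem:hankel-vandermonde}. That determinant is a polynomial in the parameters, so its nonvanishing defines a nonempty Zariski-open set on which the minimal degree is exactly $3^{m-1}$.
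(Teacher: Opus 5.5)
Your proof is correct. The odd-order vanishing and the upper bound match the paper's argument: the Widom annihilator is even, $\chi(t)=R(t^2)$ with $\deg R=3^{m-1}$, and a relation with only even shifts restricts to $(E_k)$.

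The minimality step is where you genuinely diverge. The paper never reopens the Widom expansion. It takes the Zariski-open set on which Theorem~\ref{thm:sharp} gives full degree $2\cdot3^{m-1}$. It then observes that an annihilator $Q$ of $(E_k)$ of degree $r<3^{m-1}$ would lift to $Q(t^2)$, a full-sequence annihilator of degree $2r$: it kills even indices by hypothesis and odd indices trivially. That is a contradiction.

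Your first route works mode by mode instead. Odd vanishing plus distinctness of the $\mu_\eps$ forces $A_{+,\eps}=A_{-,\eps}$. The same-sign analysis inside the proof of Theorem~\ref{thm:sharp} gives $A_{+,\eps}\ne0$. Lemma~\ref{lem:hankel-vandermonde} then applies directly to $(E_k)$. This makes you depend on the internals of that proof rather than its statement. In return it gives an explicit size-$3^{m-1}$ Hankel certificate for $(E_k)$ and the structural fact that paired amplitudes coincide. The paper's lifting argument is shorter and treats Theorem~\ref{thm:sharp} as a black box.

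Your ``second route'' is garbled as written, but it actually works. After sorting indices by parity, the blocks of $(D_{i+j})_{0\le i,j<2N}$, $N=3^{m-1}$, that vanish are the off-diagonal ones. The odd--odd block is $(E_{i+j+1})_{0\le i,j<N}$, not zero, so there is no contradiction. Since the same permutation acts on rows and columns, $\det(D_{i+j})_{0\le i,j<2N}=\det(E_{i+j})_{0\le i,j<N}\det(E_{i+j+1})_{0\le i,j<N}$. The nonzero full Hankel determinant from Theorem~\ref{thm:sharp} then immediately gives the even one. This is exactly the Hankel-determinant form of the paper's lifting argument, and the shortest route available to you.
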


\begin{proof}
Since $A_n^{\mathsf T}=-A_n$,
\[
 D_n=\det A_n^{\mathsf T}=\det(-A_n)=(-1)^nD_n,
\]
so $D_n=0$ for odd $n$.  The same transpose/sign argument also shows that
the permanent of an odd-order skew-symmetric matrix vanishes.

In the proof of Theorem~\ref{thm:skew}, put
\[
 \Lambda_{\eps}=(-1)^ma_m
 x_1^{\eps_1}\cdots x_{m-1}^{\eps_{m-1}},
 \qquad \eps\in\{-1,0,1\}^{m-1}.
\]
The full determinant sequence is annihilated by the even polynomial
\[
 P(t)=\prod_{\eps\in\{-1,0,1\}^{m-1}}
       (t^2-\Lambda_{\eps}^{\,2})
     =R(t^2),
\]
where $\deg R=3^{m-1}$.  Restricting $P$ to even indices turns a two-step
shift of $(D_n)$ into a one-step shift of $(E_k)$, so $R$ annihilates the
even subsequence.

For generic sharpness, suppose that at a parameter point where
Theorem~\ref{thm:sharp} gives full minimal degree $2\cdot3^{m-1}$, the even
subsequence had an annihilator $Q$ of degree $r<3^{m-1}$.  Then
$Q(t^2)$ would annihilate the full sequence: it annihilates the even indices
by assumption, and every term occurring at an odd index is zero.  This would
give a full-sequence annihilator of degree $2r<2\cdot3^{m-1}$, a
contradiction.
\end{proof}

\begin{remark}\label{rem:exceptional-locus}
The locus on which the minimal recurrence degree drops should not be
identified with the discriminant locus of $q$.  Even when the roots of $q$
are distinct, special multiplicative relations can make two Widom
characteristic values coincide, and grouped Widom coefficients can also
vanish.  Conversely, if $q$ has repeated roots, then some of the formal
Widom characteristic values necessarily coincide, so the product over the
Widom factors acquires repeated factors.  Such a collision does not by itself
determine the minimal recurrence degree.  On the discriminant locus the
simple-root Widom expansion must instead be interpreted through its confluent
limit: colliding exponential modes may produce polynomial--exponential terms
of the form
$n^r W^n$, and such a term requires a factor $(t-W)^{r+1}$ in a minimal
annihilator.  Thus fewer distinct Widom values need not mean a lower
recurrence degree; part or all of the multiplicity can survive through
confluent terms.

The Hankel condition packages these mechanisms into an exact generic
certificate in the symmetry classes considered above: nonvanishing of the
Hankel determinant of order $d_{\mathrm{sym}}$ or $d_{\mathrm{skew}}$
forces the predicted minimal degree.  Thus every degree drop lies on the
corresponding hypersurface $H=0$; without additional information, however,
we do not identify every point of $H=0$ with a degree drop.

For the first nontrivial symmetric case $m=2$, the exceptional hypersurface
is particularly transparent.  A direct calculation from
$D_0,\ldots,D_8$ gives
\[
 H_5\defeq\det(D_{i+j})_{0\le i,j<5}
 =2a_1^2a_2^{18}.
\]
Hence $H_5=0$ is the union of the coordinate hyperplanes $a_1=0$ and
$a_2=0$; on the genuine pentadiagonal locus $a_2\ne0$ it reduces simply to
$a_1=0$.  This small-band example shows that explicit factorization and a
simple geometric description can occur, while no comparable factorization
is asserted here for general $m$.
\end{remark}

\section{Permanent--determinant conversion in small semibandwidth}\label{sec:conversion}

The general transfer construction treats determinants and permanents in parallel but does not normally identify them.  The zero-diagonal pentadiagonal support is exceptional: a fixed P\'olya--Kasteleyn signing converts every permanent to a determinant, even before Toeplitz constancy is imposed.  This sits within the classical theory of convertible matrices and P\'olya signings; see, in particular, Gibson's characterization and extremal results \cite{Gibson1969,Gibson1971} and the determinant--permanent relations studied by Tarakanov and Zatorskii \cite{TarakanovZatorskii2009}.  We first record the permanent recurrence from the six-state calculation in the companion paper \cite{AlekseyevKhomovskyRecurrences2026}:
\begin{align}
P_{n+6}={}&a_0P_{n+5}+(a_{-1}a_1+a_{-2}a_2)P_{n+4}
 +(a_{-2}a_1^2+a_{-1}^2a_2)P_{n+3}\notag\\
&+a_{-2}a_2(a_{-1}a_1+a_{-2}a_2)P_{n+2}
 -a_0(a_{-2}a_2)^2P_{n+1}-(a_{-2}a_2)^3P_n.
\label{eq:pentaperm}
\end{align}
\begin{proposition}[P\'olya conversion for the zero-diagonal pentadiagonal support]
\label{prop:penta-polya}
Let $B=(b_{ij})_{i,j=1}^n$ be any square matrix satisfying
\[
 b_{ij}=0\qquad\text{unless}\qquad 0<|i-j|\le2.
\]
Let $C=(c_{ij})$ be the sign matrix whose only relevant negative entries are
on the first superdiagonal,
\[
 c_{ij}=\begin{cases}
 -1,&j=i+1,\\
 1,&\text{otherwise}.
 \end{cases}
\]
Then
\begin{equation}\label{eq:penta-polya-general}
 \perm B=\det(C\circ B),
\end{equation}
where $\circ$ denotes the Hadamard product.  Thus the full coordinate
subspace with zero main diagonal and lower and upper semibandwidths two is
convertible by a signing rule independent of the matrix entries.
\end{proposition}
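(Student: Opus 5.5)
The plan is to compare the two expansions term by term. Both $\perm B$ and $\det(C\circ B)$ are sums over the permutations $\sigma$ of $[n]$ whose support is admissible, meaning $1\le|\sigma(i)-i|\le 2$ for every $i$. The monomial $\prod_i b_{i\sigma(i)}$ appears in the permanent with coefficient $1$ and in the determinant with coefficient $\operatorname{sgn}(\sigma)(-1)^{u_1(\sigma)}$, where $u_1(\sigma)=\#\{i:\sigma(i)=i+1\}$ counts the entries used on the first superdiagonal. So \eqref{eq:penta-polya-general} is equivalent to the identity
\[
 \operatorname{sgn}(\sigma)=(-1)^{u_1(\sigma)}
\]
for every admissible $\sigma$. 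Both sides are multiplicative over disjoint cycles, so it suffices to verify this identity one cycle at a time, provided we understand what the cycles look like.

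\emph{Step 1 (structure lemma).} I will show that the cycles of an admissible $\sigma$ occupy pairwise disjoint intervals of consecutive integers. A cycle on $\{a,a+1\}$ is the transposition. A cycle on an interval $[a,b]$ with $k=b-a+1\ge3$ is a \emph{zigzag}: it climbs from $a$ to $b$ by steps of $+2$ through one set of positions, returns by steps of $-2$ through the complementary set, and uses exactly two turning steps of size $\pm1$, one at each end. To prove this, I will look at the gap between $t$ and $t+1$. The arcs $i\mapsto\sigma(i)$ crossing this gap upward must start at $t-1$ or $t$, and those crossing downward must start at $t+1$ or $t+2$. In each cycle the numbers of upward and downward crossings of any gap are equal. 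This local bound of at most two arcs each way, together with the absence of fixed points, should force the following. If some cycle crosses the gap, it is the only cycle that does. Moreover, the positions $t-1,t,t+1,t+2$ are threaded in the zigzag pattern. One then inducts left to right: starting at the smallest element of a cycle, its image is $a+1$ or $a+2$, and in either case the forced continuation is determined.

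\emph{Step 2 (parity count per cycle).} For one cycle of length $k$, let $u_1,u_2,d_1,d_2$ be the numbers of steps of sizes $+1,+2,-1,-2$. Since the total displacement is zero, $u_1+2u_2=d_1+2d_2$. For $k=2$ we have $u_1=d_1=1$, and the transposition has sign $-1=(-1)^{u_1}$. For a zigzag, $u_1+d_1=2$ and $u_2+d_2=k-2$. If $u_1=d_1=1$, then $u_2=d_2$, so $k$ is even, the sign is $-1$, and $u_1$ is odd. If $\{u_1,d_1\}=\{0,2\}$, then $u_2-d_2=\pm1$, so $k$ is odd, the sign is $+1$, and $u_1$ is even. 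In every case $(-1)^{k-1}=(-1)^{u_1}$. Multiplying over the cycles gives $\operatorname{sgn}\sigma=(-1)^{u_1(\sigma)}$, which proves \eqref{eq:penta-polya-general}. Nothing in the argument uses Toeplitz constancy, so the statement holds on the full coordinate subspace.

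I expect the main obstacle to be Step 1: the careful proof that admissible fixed-point-free permutations cannot have interleaving cycles, and that every cycle has the zigzag shape. The first thing I would try is the gap-crossing argument with a short case analysis on $\sigma(a)\in\{a+1,a+2\}$. As a fallback I would use an induction on $n$ via Laplace expansion along the first row, checking that the determinant and the permanent satisfy the same recursion after signing. The first-row expansion of an admissible matrix reduces to the cases $\sigma(1)=2$ with $\sigma(2)=1$, or $\sigma(1)=2$ with $\sigma(2)=3$, or $\sigma(1)=3$, and each case propagates only a bounded boundary state.
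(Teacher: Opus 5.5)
Your reduction of the identity to $\operatorname{sgn}(\sigma)=(-1)^{u_1(\sigma)}$ for every admissible $\sigma$ is correct. The gap is that the structure lemma in Step~1 is false, and with it the claim that the identity can be checked one cycle at a time.

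\emph{Counterexample.} Admissible cycles need not occupy disjoint intervals. For $n=4$ the permutation $\sigma=(1\,3)(2\,4)$ has all displacements equal to $\pm2$, so it contributes to both sides. Yet the gap between $2$ and $3$ is crossed upward by $1\mapsto3$ and by $2\mapsto4$, which lie in different cycles.

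\emph{Why the per-cycle check fails.} For the single cycle $(a\ a+2)$ you have $\operatorname{sgn}=-1$ but no first-superdiagonal step, so $(-1)^{u_1}=+1$. The per-cycle identity is simply false for such cycles. Step~2 hides this by asserting $u_1=d_1=1$ for every $2$-cycle, which covers only adjacent transpositions. The global identity survives only because these cycles come in interleaved pairs: $(a\ a+2)(a+1\ a+3)$ has sign $+1$ and $u_1=0$.

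\emph{Repair.} The fix stays inside your gap-crossing framework, but the unit of verification must be a block between uncrossed gaps, not a cycle.
Suppose the gap between $t$ and $t+1$ is crossed twice in each direction. Upward arcs can start only at $t-1$ or $t$, and downward arcs only at $t+1$ or $t+2$. This forces $\sigma(t-1)=t+1$, $\sigma(t)=t+2$, $\sigma(t+2)=t$ and $\sigma(t+1)=t-1$. So $\sigma$ restricts to $(t-1\ t+1)(t\ t+2)$ on $[t-1,t+2]$, and the two gaps just outside this interval are uncrossed.
Now take a block with no doubly crossed gap. Each internal gap is crossed exactly once each way, so the spans of distinct cycles share no gap. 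Since there are no fixed points and every internal gap is crossed, this forces a single cycle on the block. That cycle is either an adjacent transposition or the unique Hamiltonian cycle of the square of a path, i.e.\ your zigzag.
Checking $\operatorname{sgn}=(-1)^{u_1}$ on these three block types then completes the proof.

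\emph{Comparison with the paper.} Once repaired, your argument is a genuinely different route. The paper embeds the bipartite support graph in the plane and checks the Kasteleyn face condition: one negative edge on each quadrilateral face and two on each hexagonal end face. This gives a common sign for all matchings, which it then fixes using one explicit matching. Your version is fully elementary and classifies the contributing permutations explicitly. The cost is precisely the case analysis above, which the Kasteleyn argument avoids.
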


\begin{proof}
View the support as a bipartite graph $G_n$ with row vertices $r_i$, column
vertices $c_j$, and an edge $r_ic_j$ whenever $0<|i-j|\le2$.  For $n\ge4$
there is a plane embedding whose quadrilateral face boundaries are
\[
 r_i c_{i+1} r_{i+3} c_{i+2} r_i,
 \qquad
 c_i r_{i+1} c_{i+3} r_{i+2} c_i,
 \qquad 1\le i\le n-3,
\]
and whose two remaining face boundaries are the hexagons
\[
 r_1c_3r_2c_1r_3c_2r_1,
 \qquad
 r_{n-2}c_{n-1}r_nc_{n-2}r_{n-1}c_nr_{n-2}.
\]
These $2n-4$ cycles give the complete face set: each support edge occurs on
two listed boundaries and $|E(G_n)|-|V(G_n)|+2=(4n-6)-2n+2=2n-4$.
(The case $n=3$ is the corresponding $6$-cycle, and $n\le2$ is immediate.)
Give an edge weight $-1$ precisely when it is a first-superdiagonal edge
$r_ic_{i+1}$ and weight $+1$ otherwise.  Every quadrilateral face contains
one negative edge, whereas each hexagonal face contains two.  Hence the
product of the signing weights around every face $F$ is
\[
 (-1)^{|F|/2+1},
\]
which is exactly the flat Kasteleyn condition in the bipartite
permanent--determinant method; see, for example,
\cite{Kasteleyn1967,Kuperberg1998}.  Consequently all perfect-matching terms
in $\det(C\circ B)$ have the same sign.

It remains only to fix the global sign.  If $n$ is even, use the matching
corresponding to $(1\,2)(3\,4)\cdots(n-1\,n)$; its permutation sign is
$(-1)^{n/2}$ and it uses $n/2$ first-superdiagonal edges, so the two sign
factors cancel.  If $n\ge3$ is odd, use
$(1\,2\,3)(4\,5)\cdots(n-1\,n)$; its permutation sign is
$(-1)^{(n-3)/2}$, while the signing contributes
$(-1)^{2+(n-3)/2}$, again giving $+1$.
Thus the common determinant coefficient is $+1$, proving
\eqref{eq:penta-polya-general} for arbitrary edge weights.
\end{proof}

For Toeplitz matrices the converter has two particularly simple forms.
Write
\[
 A_n(a)=T_n(a_{-2},a_{-1},0,a_1,a_2),\qquad
 D_n(a)\defeq\det A_n(a),\qquad P_n(a)\defeq\perm A_n(a),
\]
and define
\[
 \Psi(a_{-2},a_{-1},0,a_1,a_2)
 :=(a_{-2},a_{-1},0,-a_1,a_2).
\]

\begin{corollary}[Real Toeplitz sign conversion]\label{cor:penta-real-conversion}
For every $n\ge0$,
\begin{equation}\label{eq:penta-real-conversion}
 P_n(a)=D_n(\Psi a),
 \qquad
 D_n(a)=P_n(\Psi a).
\end{equation}
\end{corollary}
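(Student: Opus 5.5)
Both identities will come from Proposition~\ref{prop:penta-polya} applied to the Toeplitz section $B=A_n(a)$, whose support lies in $0<|i-j|\le2$.

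\emph{First identity.} The Hadamard product $C\circ A_n(a)$ differs from $A_n(a)$ only on the first superdiagonal, where each entry $a_1$ is replaced by $-a_1$. Hence
\[
 C\circ A_n(a)=T_n(a_{-2},a_{-1},0,-a_1,a_2)=A_n(\Psi a),
\]
and \eqref{eq:penta-polya-general} gives $P_n(a)=\perm A_n(a)=\det A_n(\Psi a)=D_n(\Psi a)$ directly.

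\emph{Second identity.} Note that $\Psi$ is an involution. Applying the first identity to the symbol $\Psi a$, which is again a zero-diagonal pentadiagonal Toeplitz symbol, gives $P_n(\Psi a)=D_n(\Psi\Psi a)=D_n(a)$.

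\emph{Degenerate sizes.} For $n\le 2$ the proposition already covers these cases, and $n=0$ is the convention $P_0=D_0=1$. They can also be checked directly: for $n=1$ both sides vanish, and for $n=2$ we have $\perm=a_1a_{-1}$ and $\det(\Psi a)=-(-a_1)a_{-1}$.

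There is no genuine obstacle. The only point needing care is that the signing matrix $C$ is constant along diagonals, so the Hadamard product preserves Toeplitz structure and acts on the symbol exactly as $\Psi$ does.
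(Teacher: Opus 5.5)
Your proposal is correct and follows the paper's proof: the first identity is Proposition~\ref{prop:penta-polya} applied to $A_n(a)$, because the signing negates exactly the first superdiagonal and so acts on the symbol as $\Psi$. The second identity follows by applying the first to $\Psi a$ and using $\Psi^2=1$.
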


\begin{proof}
The first identity is Proposition~\ref{prop:penta-polya}, since $\Psi$ changes
exactly the first superdiagonal.  Apply the same identity to $\Psi a$ and
use $\Psi^2=1$ to obtain the second.
\end{proof}

The experimentally natural form treats the two directions symmetrically.
Define the phase map
\begin{equation}\label{eq:penta-phase-map}
 \Phi(a_{-2},a_{-1},0,a_1,a_2)
 :=(-a_{-2},i a_{-1},0,i a_1,-a_2),
\end{equation}
that is, $a_s\mapsto i^{|s|}a_s$ for $s=\pm1,\pm2$.

\begin{corollary}[Toeplitz phase conversion]\label{cor:penta-phase-conversion}
For every $n\ge0$,
\begin{equation}\label{eq:penta-phase-conversion}
 P_n(a)=D_n(\Phi a),
 \qquad
 D_n(a)=P_n(\Phi a).
\end{equation}
\end{corollary}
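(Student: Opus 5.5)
The plan is to reduce the phase conversion to the real sign conversion of Corollary~\ref{cor:penta-real-conversion} by a diagonal gauge transformation.

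\emph{Step 1: the gauge identity.} Conjugate $A_n(b)$ by $G=\operatorname{diag}(i^1,i^2,\ldots,i^n)$. The entry $(p,q)$ of $G^{-1}A_n(b)G$ is $i^{\,q-p}b_{q-p}$, so
\[
 G^{-1}A_n(b)G=A_n(b_{-2}i^{-2},\,b_{-1}i^{-1},\,0,\,b_1 i,\,b_2i^2)
 =A_n(-b_{-2},-ib_{-1},0,ib_1,-b_2).
\]
The determinant is invariant under this similarity. The permanent is also invariant, because each permutation term picks up $\prod_p i^{\sigma(p)-p}=1$. So both $D_n$ and $P_n$ are unchanged by the substitution $\Gamma:(b_{-2},b_{-1},0,b_1,b_2)\mapsto(-b_{-2},-ib_{-1},0,ib_1,-b_2)$.

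\emph{Step 2: compose with $\Psi$.} Apply $\Gamma$ to $\Psi a=(a_{-2},a_{-1},0,-a_1,a_2)$:
\[
 \Gamma\Psi a=(-a_{-2},-ia_{-1},0,-ia_1,-a_2).
\]
This differs from $\Phi a=(-a_{-2},ia_{-1},0,ia_1,-a_2)$ only by negating both odd-offset diagonals $s=\pm1$. Negating both odd diagonals multiplies every entry $(p,q)$ by $(-1)^{q-p}$. This is conjugation by $\operatorname{diag}((-1)^p)$, which again preserves both $D_n$ and $P_n$ by the same product argument. Alternatively, one can fold this into Step 1 by using $G=\operatorname{diag}((-i)^p)$ instead; the sign bookkeeping is the only delicate point.

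Combining the steps,
\[
 D_n(\Phi a)=D_n(\Psi a)=P_n(a).
\]

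\emph{Step 3: the reverse direction.} The same gauge invariances give $P_n(\Phi a)=P_n(\Psi a)$. The second identity of Corollary~\ref{cor:penta-real-conversion} then gives $P_n(\Psi a)=D_n(a)$, hence $D_n(a)=P_n(\Phi a)$.

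The argument uses no new input beyond Proposition~\ref{prop:penta-polya}. The only step that needs care is checking the exponents of $i$ in Step 1, and hence which gauge matrix makes the phases match $\Phi$ exactly. The cases $n=0,1$ are trivial.
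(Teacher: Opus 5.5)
Your proof is correct and follows essentially the paper's route: a diagonal gauge by powers of $i$ reduces the phase form to the real $\Psi$-conversion. Your two-stage gauge in Steps 1--2 is the paper's single conjugation $S_n^{-1}A_n(\Phi a)S_n=A_n(\Psi a)$, with $S_n=\operatorname{diag}(i,\ldots,i^n)$, split into pieces (your alternative $\operatorname{diag}((-i)^p)$ is exactly $S_n^{-1}$), and the only cosmetic difference is the reverse identity: the paper gets it by applying the first identity to $\Phi a$ and using $D_n(\Phi^2a)=D_n(a)$, whereas you use gauge invariance of the permanent together with the second half of Corollary~\ref{cor:penta-real-conversion}, and both are valid.
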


\begin{proof}
Let $S_n=\operatorname{diag}(i,i^2,\ldots,i^n)$.  Since a Toeplitz entry on
offset $s=j-i$ is multiplied under $S_n^{-1}(\cdot)S_n$ by $i^s$,
\[
 S_n^{-1}A_n(\Phi a)S_n=A_n(\Psi a).
\]
The first identity follows from Corollary~\ref{cor:penta-real-conversion}.
Moreover $\Phi^2$ multiplies the odd offsets by $-1$ and leaves the even
offsets fixed.  This is the diagonal similarity induced by
$\operatorname{diag}((-1)^1,\ldots,(-1)^n)$, so
$D_n(\Phi^2a)=D_n(a)$.  Applying the first identity to $\Phi a$ yields the
second.
\end{proof}

\subsection{Restoring the main diagonal: paired fixed-point renewal}\label{subsec:penta-fixed-point}

The P\'olya conversion requires a zero main diagonal.  Once a constant main
diagonal is restored, direct permanent--determinant conversion no longer
persists, but the two sequences satisfy parallel renewal identities.  The
only difference is the sign of the unique transposition that can cross a
fixed point.  We first record the universal fixed-point expansion behind
this refinement.

\begin{lemma}[Universal fixed-point expansions]\label{lem:fixedpoint-universal}
For every $n\times n$ matrix $B$ and every $0\le k\le n$,
\begin{align}
 [c^k]\perm(B+cI_n)
 &=\sum_{\substack{S\subseteq[n]\\|S|=k}}
 \perm B[[n]\setminus S,[n]\setminus S],
 \label{eq:fixedpoint-universal-per}\\
 [c^k]\det(B+cI_n)
 &=\sum_{\substack{S\subseteq[n]\\|S|=k}}
 \det B[[n]\setminus S,[n]\setminus S].
 \label{eq:fixedpoint-universal-det}
\end{align}
\end{lemma}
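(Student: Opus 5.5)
We expand the permanent and determinant of $B+cI_n$ directly over permutations and sort the terms by their sets of fixed points that use the diagonal $c$. For each permutation $\pi$ of $[n]$, the corresponding term of $\perm(B+cI_n)$ is
\[
 \prod_{i=1}^n\bigl(b_{i\pi(i)}+c\,\delta_{i\pi(i)}\bigr).
\]
Expanding this product, choose for each fixed point $i$ of $\pi$ whether to take $b_{ii}$ or $c$. A term therefore corresponds to a pair $(\pi,S)$ with $S\subseteq\operatorname{Fix}(\pi)$, and its value is $c^{|S|}\prod_{i\notin S}b_{i\pi(i)}$. Nonfixed points can contribute only $b$-entries, since $\delta_{i\pi(i)}=0$ for them.

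Next we reindex. A pair $(\pi,S)$ with $S\subseteq\operatorname{Fix}(\pi)$ and $|S|=k$ is the same thing as a choice of $k$-subset $S$ together with a permutation $\pi'$ of $[n]\setminus S$, where $\pi$ extends $\pi'$ by the identity on $S$. This correspondence is bijective. The monomial $\prod_{i\notin S}b_{i\pi'(i)}$ is exactly the term of $\perm B[[n]\setminus S,[n]\setminus S]$ indexed by $\pi'$. Summing over $\pi'$ and then over $S$ gives \eqref{eq:fixedpoint-universal-per}.

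For the determinant the same bookkeeping applies once the sign is included. We only need $\operatorname{sgn}(\pi)=\operatorname{sgn}(\pi')$. This holds because adjoining fixed points does not change the cycle type apart from $1$-cycles, so the inversion parity is unchanged. Equivalently, the sign of a permutation of the ordered set $[n]\setminus S$ is intrinsic and is the sign used in the principal minor. Hence each term equals the corresponding signed term of $\det B[[n]\setminus S,[n]\setminus S]$, which proves \eqref{eq:fixedpoint-universal-det}.

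The only point needing care is this sign compatibility, and it is immediate from the cycle decomposition. The convention for $k=n$ is that the empty minor equals $1$, so the top coefficient $[c^n]$ is $1$ in both expansions.
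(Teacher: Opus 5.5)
Your proposal is correct and follows essentially the same argument as the paper: take $c$ from the diagonal exactly at a set $S$ of fixed points, identify the remaining terms with an arbitrary permutation of $[n]\setminus S$, and note that adjoining fixed points leaves the sign unchanged. Your write-up simply makes the bijection $(\pi,S)\leftrightarrow(S,\pi')$ and the sign check via cycle counts more explicit than the paper's short proof does.
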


\begin{proof}
In either expansion choose the summand $c$ from the diagonal exactly at the
positions in $S$.  Those positions are fixed, while the remaining
permutation is an arbitrary bijection of $[n]\setminus S$.  In the
determinant case adjoining fixed points does not change the permutation sign,
so the remaining signed sum is the corresponding principal minor.
\end{proof}

Fix the four off-diagonal parameters and write
\begin{align}
 \mathcal P_n(c)&=\perm T_n(a_{-2},a_{-1},c,a_1,a_2),
 & Z_n&=\mathcal P_n(0),\notag\\
 \mathcal D_n(c)&=\det T_n(a_{-2},a_{-1},c,a_1,a_2),
 & Y_n&=\mathcal D_n(0),
 \label{eq:penta-fixedpoint-defs}
\end{align}
and put
\begin{equation}\label{eq:penta-fixedpoint-invariants}
 p=a_{-1}a_1,\qquad t=a_{-2}a_2,\qquad
 v=a_{-2}a_1^2+a_{-1}^2a_2.
\end{equation}
For $\eps\in\{+1,-1\}$ define
\begin{equation}\label{eq:penta-fixedpoint-Qeps}
 Q_{\eps}(z)
 =1-(\eps p+t)z^2-vz^3-t(\eps p+t)z^4+t^3z^6,
\end{equation}
and use the unified notation
\begin{equation}\label{eq:penta-fixedpoint-unified-notation}
 F_n^{(\eps)}(c)=
 \begin{cases}
  \mathcal P_n(c),&\eps=+1,\\
  \mathcal D_n(c),&\eps=-1,
 \end{cases}
 \qquad
 Z_n^{(\eps)}=F_n^{(\eps)}(0),
 \qquad
 Z^{(\eps)}(z)=\sum_{n\ge0}Z_n^{(\eps)}z^n.
\end{equation}
Thus $Z^{(+1)}(z)=Z(z)$ and $Z^{(-1)}(z)=Y(z)$.

\begin{proposition}[Paired fixed-point renewal identities]
\label{prop:penta-fixedpoint-renewal}
For $\eps\in\{+1,-1\}$,
\begin{equation}\label{eq:penta-zero-gf-unified}
 Z^{(\eps)}(z)=\frac{1-tz^2}{Q_{\eps}(z)},
\end{equation}
and
\begin{align}
 \mathcal F_{\eps}(z,c)
 &:=\sum_{n\ge0}F_n^{(\eps)}(c)z^n\notag\\
 &=\frac{Z^{(\eps)}(z)}
 {1-cz(1+\eps tz^2)Z^{(\eps)}(z)}
 \label{eq:penta-renewal-gf}\\[3pt]
 &=\frac{1-tz^2}
 {Q_{\eps}(z)-cz(1+\eps tz^2)(1-tz^2)}.
 \label{eq:penta-full-gf-unified}
\end{align}
Equivalently, the two cases are
\begin{align}
 \mathcal P(z,c)
 &=\frac{Z(z)}{1-cz(1+tz^2)Z(z)}
 =\frac{1-tz^2}{Q_{+1}(z)-cz(1-t^2z^4)},
 \label{eq:penta-renewal-per}\\
 \mathcal D(z,c)
 &=\frac{Y(z)}{1-cz(1-tz^2)Y(z)}
 =\frac{1-tz^2}{Q_{-1}(z)-cz(1-tz^2)^2}.
 \label{eq:penta-renewal-det}
\end{align}
Subtracting $Z^{(\eps)}(z)$ from \eqref{eq:penta-renewal-gf} gives the
corresponding difference generating function immediately.  Moreover,
\begin{equation}\label{eq:penta-invariant-ring}
 F_n^{(\eps)}(c)\in\mathbb Z[c,p,t,v]
 \qquad(n\ge0,\ \eps=\pm1).
\end{equation}
\end{proposition}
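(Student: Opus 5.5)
Two ingredients are needed: the zero-diagonal generating functions and the renewal identity that restores the diagonal.

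\emph{Zero-diagonal generating functions.} For \eqref{eq:penta-zero-gf-unified} with $\eps=+1$, we start from the six-term permanent recurrence \eqref{eq:pentaperm} specialized to $a_0=0$:
\[
 Z_{n+6}=(p+t)Z_{n+4}+vZ_{n+3}+t(p+t)Z_{n+2}-t^3Z_n .
\]
Its characteristic polynomial, reversed, is exactly $Q_{+1}(z)$. The numerator therefore has degree at most five and is fixed by the initial values $Z_0,\ldots,Z_5$, computed directly from the few permutations with $0<|\sigma(i)-i|\le2$ at small $n$: $Z_0=1$, $Z_1=0$, $Z_2=p$, $Z_3=v$, and so on. Multiplying these out should leave the numerator $1-tz^2$. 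For $\eps=-1$ we apply Corollary~\ref{cor:penta-real-conversion} in the form $Y_n(a)=Z_n(\Psi a)$. Since $\Psi$ sends $a_1\mapsto-a_1$, it maps $p\mapsto-p$ and fixes $t$ and $v$ (as $v$ involves only $a_1^2$). Hence $Q_{+1}\mapsto Q_{-1}$ and the numerator is unchanged. This also shows that $Z^{(\eps)}_n\in\mathbb Z[p,t,v]$.

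\emph{Renewal identity.} Next we use Lemma~\ref{lem:fixedpoint-universal} with $B=A_n(a)$ of zero diagonal: the coefficient of $c^k$ is a sum over fixed-point sets $S$ of permanents or determinants of principal submatrices $B[S^c,S^c]$. Such a submatrix is generally not Toeplitz. Deleting an index $i$ splits the support into the left and right blocks, except that the entries $b_{i-1,i+1}$ and $b_{i+1,i-1}$ (offsets $\pm2$) still couple the two sides across the gap. So the argument is a transfer/renewal decomposition.

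A permutation avoiding a fixed point $i$ either does not cross $i$, in which case it factors as a product of zero-diagonal Toeplitz pieces on the two sides, or it crosses $i$. The only possibility for crossing is the transposition $(i-1\ i+1)$, with weight $a_{-2}a_2=t$; in the determinant it has sign $-1$, which is the source of $\eps$. Crossing requires both $i-1$ and $i+1$ to be non-fixed and swapped, contributing a factor $\eps tz^2$ together with a gap. Hence each fixed point, with its optional crossing transposition, contributes the weight $cz(1+\eps tz^2)$ between consecutive zero-diagonal blocks.

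One must be careful here: when the transposition $(i-1\ i+1)$ is used, the adjacent blocks lose their boundary indices. The clean way to handle this is to define the ``atom'' as a fixed point $i$ optionally surrounded by the swap $(i-1,i+1)$, and to check that the blocks between atoms are then again unconstrained zero-diagonal Toeplitz sections. The subtle point is whether a block adjacent to a swap can interact with it further; it cannot, since offsets are at most $2$ and $i\pm1$ are already used. A fixed point adjacent to the swap, which would form a three-vertex configuration, is also excluded automatically. Summing the resulting geometric series over sequences of atoms gives \eqref{eq:penta-renewal-gf}. Substituting \eqref{eq:penta-zero-gf-unified} gives \eqref{eq:penta-full-gf-unified}, and from it \eqref{eq:penta-renewal-per} and \eqref{eq:penta-renewal-det} by taking $\eps=\pm1$.

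\emph{Integrality and the main obstacle.} The integrality claim \eqref{eq:penta-invariant-ring} follows by expanding \eqref{eq:penta-full-gf-unified} as a power series: the denominator has constant term $1$ and coefficients in $\mathbb Z[c,p,t,v]$. The main obstacle is verifying the atom decomposition rigorously, especially at the boundaries $i=1$ and $i=n$ and for consecutive fixed points. As a consistency check, we would first confirm it against $\mathcal P_n(c)$ for $n\le4$ by direct expansion.
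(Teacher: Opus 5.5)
Your proposal is correct and follows the paper's proof: for $\eps=+1$ you get the zero-diagonal generating function from \eqref{eq:pentaperm} together with the initial values; for $\eps=-1$ you obtain it through Corollary~\ref{cor:penta-real-conversion}, which sends $p\mapsto-p$ and fixes $t$ and $v$; and the restored diagonal comes from the left-to-right decomposition into zero-diagonal blocks separated by atoms $cz(1+\eps tz^2)$, with $(i-1\ i+1)(i)$ as the only crossed configuration. The one difference is the integrality claim \eqref{eq:penta-invariant-ring}, which you read off directly from \eqref{eq:penta-full-gf-unified} (the numerator lies in $\mathbb Z[t][z]$ and the denominator has constant term $1$ with coefficients in $\mathbb Z[c,p,t,v]$), whereas the paper gives a separate argument via the ring of displacement-balanced monomials and transposition invariance; your route is shorter and fully valid once the generating function is established.
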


\begin{proof}
For $\eps=+1$, recurrence~\eqref{eq:pentaperm} at $c=0$ gives
\[
 Z(z)=\frac{1-tz^2}{Q_{+1}(z)}.
\]
For $\eps=-1$, Corollary~\ref{cor:penta-real-conversion} gives
$Y_n=P_n(\Psi a)$ on the zero-diagonal locus.  Under $\Psi$ the invariant
$p$ changes to $-p$, while $t$ and $v$ remain fixed.  Replacing $p$ by
$-p$ in the preceding formula therefore gives
\[
 Y(z)=\frac{1-tz^2}{Q_{-1}(z)}.
\]

It remains to restore the main diagonal.  In a bandwidth-two permutation, a
fixed point $i$ either separates the permutation into independent left and
right pieces, contributing $cz$, or it is crossed.  In the latter case
bandwidth two forces $i-1\mapsto i+1$, and bijectivity then forces
$i+1\mapsto i-1$.  Thus the unique crossed configuration is
$(i-1\ i+1)(i)$, of size three and unsigned weight $tc$.  It contributes
$+tc$ to the permanent and $-tc$ to the determinant.  Hence in the unified
notation its contribution is $\eps tc$, and scanning from left to right gives
the unique decomposition
\[
 Z^{(\eps)}A_{\eps}Z^{(\eps)}A_{\eps}\cdots
 A_{\eps}Z^{(\eps)},
 \qquad
 A_{\eps}=cz(1+\eps tz^2).
\]
Summing the resulting geometric series proves
\eqref{eq:penta-renewal-gf}; substituting
\eqref{eq:penta-zero-gf-unified} gives
\eqref{eq:penta-full-gf-unified} and hence
\eqref{eq:penta-renewal-per}--\eqref{eq:penta-renewal-det}.

Finally, diagonal similarity shows that every Toeplitz monomial is balanced
with respect to displacement.  The balanced invariant ring is generated by
\[
 p,\quad t,\quad x=a_{-2}a_1^2,\quad y=a_{-1}^2a_2,
 \qquad xy=tp^2.
\]
Transposition fixes $p,t$ and interchanges $x,y$.  Both determinant and
permanent are invariant under transposition, so
\[
 \mathbb Z[p,t,x,y]^{x\leftrightarrow y}/(xy-tp^2)
 =\mathbb Z[p,t,x+y]=\mathbb Z[p,t,v],
\]
which proves \eqref{eq:penta-invariant-ring}.
\end{proof}

\begin{corollary}[All fixed-point layers]\label{cor:penta-fixedpoint-layers}
For every $k,n\ge0$ and $\eps\in\{+1,-1\}$,
\begin{equation}\label{eq:penta-fixedpoint-coeff}
 [c^k]F_n^{(\eps)}(c)
 =[z^{n-k}](1+\eps tz^2)^kZ^{(\eps)}(z)^{k+1}.
\end{equation}
Equivalently,
\begin{equation}\label{eq:penta-fixedpoint-convolution}
 [c^k]F_n^{(\eps)}(c)
 =\sum_{j=0}^k\binom{k}{j}(\eps t)^j
 \!\!\!\sum_{\substack{r_0+\cdots+r_k=n-k-2j\\r_i\ge0}}
 Z_{r_0}^{(\eps)}\cdots Z_{r_k}^{(\eps)}.
\end{equation}
For fixed $k$,
\begin{equation}\label{eq:penta-fixedpoint-layer-gf}
 \sum_{n\ge0}[c^k]F_n^{(\eps)}(c)z^n
 =\frac{z^k(1-tz^2)^{k+1}(1+\eps tz^2)^k}
 {Q_{\eps}(z)^{k+1}}.
\end{equation}
In $\mathbb Q(p,t,v)[z]$,
\begin{equation}\label{eq:penta-fixedpoint-gcd}
 \gcd\bigl(Q_{\eps}(z),
 z(1-tz^2)(1+\eps tz^2)\bigr)=1.
\end{equation}
Hence for either determinant or permanent the generic minimal recurrence
degree of the fixed-$k$ sequence is
\begin{equation}\label{eq:penta-fixedpoint-generic-order}
 6(k+1).
\end{equation}
\end{corollary}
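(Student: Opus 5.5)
The plan is to derive everything from the renewal formula \eqref{eq:penta-renewal-gf} of Proposition~\ref{prop:penta-fixedpoint-renewal}. Write $A_\eps=z(1+\eps tz^2)$. Expanding the geometric series in $c$ gives
\[
 \mathcal F_\eps(z,c)=\sum_{k\ge0}c^k A_\eps^k\,Z^{(\eps)}(z)^{k+1}.
\]
Extracting $[c^kz^n]$ then yields \eqref{eq:penta-fixedpoint-coeff}, after the factor $z^k$ from $A_\eps^k$ is moved into the exponent. Next we expand $(1+\eps tz^2)^k$ binomially and write $Z^{(\eps)}(z)^{k+1}$ as a $(k+1)$-fold Cauchy product; this gives the convolution \eqref{eq:penta-fixedpoint-convolution}. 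Finally, substituting $Z^{(\eps)}=(1-tz^2)/Q_\eps$ from \eqref{eq:penta-zero-gf-unified} into the series gives the layer generating function \eqref{eq:penta-fixedpoint-layer-gf}. These steps are purely formal.

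For the gcd \eqref{eq:penta-fixedpoint-gcd}, we check that $Q_\eps$ shares no root with any of the three factors, working over the algebraic closure of $\mathbb Q(p,t,v)$ with $p,t,v$ treated as independent indeterminates.
\begin{itemize}[leftmargin=*]
\item \textbf{Factor $z$.} Since $Q_\eps(0)=1$, the root $z=0$ is excluded.
\item \textbf{Factors $1\mp tz^2$.} At any root of either factor, $z^2=\pm1/t$, so $z^4=1/t^2$ and $z^6=\pm1/t^3$. Reducing $Q_\eps$ modulo $z^2=\pm1/t$ leaves a remainder of the form $\alpha+\beta z$. The $z$-coefficient $\beta$ comes only from $-vz^3$, so $\beta=\mp v/t\neq0$, because $v$ is an indeterminate.
\end{itemize}
A nonzero remainder of degree at most one cannot vanish at both roots $\pm z_0$ of the quadratic. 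So it suffices to note that $\alpha+\beta z_0=0$ would put $z_0$ in $\mathbb Q(p,t,v)$. That is impossible, since $z_0^2=\pm1/t$ is not a square in this field. This argument handles $1-tz^2$ and $1+\eps tz^2$ at once, and so establishes coprimality.

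\textbf{Generic degree.} Coprimality makes the rational function \eqref{eq:penta-fixedpoint-layer-gf} reduced, with denominator $Q_\eps^{k+1}$ of degree $6(k+1)$; the numerator is coprime to $Q_\eps$ once $\gcd(Q_\eps,z(1-tz^2)(1+\eps tz^2))=1$. The degree of a minimal recurrence equals the degree of the reduced denominator, provided the numerator degree is smaller than it, or at least the recurrence is counted appropriately. Here the numerator has degree $k+2(k+1)+2k=5k+2$, which is less than $6k+6$, so the minimal annihilator over $\mathbb Q(p,t,v)$ has degree exactly $6(k+1)$.

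The main obstacle is passing from this function-field statement to a generic statement about actual parameters $a_{\pm1},a_{\pm2}$. The approach is to show that the Hankel determinant of order $6(k+1)$ is a nonzero polynomial, as in Lemma~\ref{lem:hankel-vandermonde}. Since the sequence lies in $\mathbb Z[c,p,t,v]$ by \eqref{eq:penta-invariant-ring}, this Hankel determinant is a nonzero element of $\mathbb Z[p,t,v]$. The map $(a_{\pm1},a_{\pm2})\mapsto(p,t,v)$ is dominant: for instance, fixing $a_{-1}=a_{-2}=1$ gives $p=a_1$, $t=a_2$, $v=a_1^2+a_2$, which are algebraically independent only up to this relation. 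Because of that relation, we instead keep $a_{-1}$ free and check the Jacobian rank equals three. The Hankel determinant therefore pulls back to a nonzero polynomial in the diagonal parameters, which gives the Zariski-open set.
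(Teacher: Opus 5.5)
Your proposal is correct and follows the paper's route: geometric expansion of \eqref{eq:penta-renewal-gf}, coefficient extraction, and coprimality giving the reduced denominator $Q_\eps^{k+1}$. For the gcd step, your reduction modulo $z^2=\pm1/t$ (remainder $-(1\pm1)\eps p/t\mp(v/t)z$) together with the non-square argument is a valid substitute for the paper's one-line remark that $Q_\eps$ is affine in $v$ with coefficient $-z^3$.

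Your closing paragraph makes explicit the passage from $\mathbb Q(p,t,v)$ to a Zariski-open set of diagonal parameters, which the paper leaves implicit, and that argument holds up with three small corrections. The nonvanishing of the order-$6(k+1)$ Hankel determinant should be justified by the general fact that a sequence of minimal recurrence degree $d$ has a nonsingular leading $d\times d$ Hankel matrix. Lemma~\ref{lem:hankel-vandermonde} itself assumes distinct pure exponential modes, whereas here the modes have multiplicity $k+1$. The dominance you assert is true: with $a_{-2}=1$, the Jacobian of $(p,t,v)$ with respect to $(a_{-1},a_1,a_2)$ is $2(a_{-1}^2a_2-a_1^2)$, which is generically nonzero. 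Your first example $a_{-1}=a_{-2}=1$ should simply be dropped.
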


\begin{proof}
Expand \eqref{eq:penta-renewal-gf} geometrically.  Coefficient extraction
gives \eqref{eq:penta-fixedpoint-coeff} and
\eqref{eq:penta-fixedpoint-convolution}, while
\eqref{eq:penta-zero-gf-unified} gives
\eqref{eq:penta-fixedpoint-layer-gf}.
Since $Q_{\eps}(0)=1$ and $Q_{\eps}$ is affine in $v$ with coefficient
$-z^3$, no nonconstant factor of
$(1-tz^2)(1+\eps tz^2)$ can divide $Q_{\eps}$ over
$\mathbb Q(p,t,v)$.  This proves \eqref{eq:penta-fixedpoint-gcd}, so the
reduced denominator is $Q_{\eps}^{k+1}$ of degree $6(k+1)$.
\end{proof}

\begin{remark}[Near-leading layers]
For each fixed $r\ge0$, Corollary~\ref{cor:penta-fixedpoint-layers} also
implies a simple qualitative description of the coefficient at fixed distance
from the leading term.  Indeed, with
\[
 H_{\eps}(z):=(1+\eps tz^2)Z^{(\eps)}(z)=1+O(z^2),
\]
setting $k=n-r$ in \eqref{eq:penta-fixedpoint-coeff} and expanding
$H_{\eps}(z)^{n-r}$ shows that
\[
 [c^{n-r}]F_n^{(\eps)}(c)
\]
is a polynomial in $n$ of degree at most $\lfloor r/2\rfloor$.
We do not pursue explicit formulas for these near-leading layers here.
\end{remark}

\begin{corollary}[Two-sign pentadiagonal square formula]
\label{cor:penta-two-sign-square}
For $\sigma\in\{\pm1\}$ put
\[
 A_n^{(\sigma)}=T_n(a,b,0,\sigma b,-a),
 \qquad
 P_n^{(\sigma)}=\perm A_n^{(\sigma)}.
\]
Then
\begin{equation}\label{eq:penta-two-sign-odd}
 P_{2r+1}^{(\sigma)}=0
 \qquad(r\ge0),
\end{equation}
and
\begin{align}
 P_{2r}^{(\sigma)}
 &=\sigma^r U_{r+1}(b,\sigma a^2)^2 \label{eq:penta-two-sign-lucas}\\
 &=\sigma^r\left[
   \sum_{j=0}^{\lfloor r/2\rfloor}
   (-\sigma)^j\binom{r-j}{j}a^{2j}b^{r-2j}
  \right]^2. \label{eq:penta-two-sign-sum}
\end{align}
Equivalently, if
\[
 K_r^{(\sigma)}=
 \begin{pmatrix}
 b&a&&\\
 \sigma a&b&\ddots&\\
 &\ddots&\ddots&a\\
 &&\sigma a&b
 \end{pmatrix}_{r\times r},
\]
then
\begin{equation}\label{eq:penta-two-sign-continuant}
 P_{2r}^{(\sigma)}=\sigma^r\det(K_r^{(\sigma)})^2.
\end{equation}
\end{corollary}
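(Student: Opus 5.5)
The plan is to reduce the permanent to a skew-symmetric determinant using the pentadiagonal conversions already proved. After that, Proposition~\ref{prop:skew-toeplitz-square} (or simply $\det=\operatorname{Pf}^2$) does the work, and the remaining task is to evaluate a banded Pfaffian combinatorially. The reduction differs by sign class.

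\emph{Case $\sigma=+1$.} The symbol is $(a_{-2},a_{-1},0,a_1,a_2)=(a,b,0,b,-a)$. Corollary~\ref{cor:penta-real-conversion} gives $P_n^{(+1)}=D_n(\Psi\cdot)$, computed at $(a,b,0,-b,-a)$. This symbol is skew-symmetric.

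\emph{Case $\sigma=-1$.} The symbol is $(a,b,0,-b,-a)$. Corollary~\ref{cor:penta-phase-conversion} gives $P_n^{(-1)}=D_n$ at $\Phi(a,b,0,-b,-a)=(-a,ib,0,-ib,a)$. This symbol is again skew-symmetric.

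In both cases the odd-order vanishing \eqref{eq:penta-two-sign-odd} is then immediate from $\det A=\det(-A^{\mathsf T})$; alternatively, it follows directly from the skew permanent remark in Corollary~\ref{cor:skew-even}.

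For even order $2r$, I would write $P_{2r}^{(\sigma)}=\operatorname{Pf}(M)^2$, where $M$ is the skew pentadiagonal matrix just obtained. I would then compute the Pfaffian from its matching expansion. The graph has edges $\{i,i+1\}$ and $\{i,i+2\}$ on $[2r]$. Scanning left to right, the vertex $1$ is matched either to $2$, which leaves a block of size two, or to $3$. In the latter case $2$ is forced to match $4$, since $1$ and $3$ are taken and $2$ has no other available neighbours. So every perfect matching is a unique concatenation of adjacent blocks $\{i,i+1\}$ and crossing blocks $\{i,i+2\},\{i+1,i+3\}$. Relative to the adjacent configuration, a crossing block carries Pfaffian sign $-1$. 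Hence
\[
 \operatorname{Pf}(M)=\sum_{j}\binom{r-j}{j}\,u^{r-2j}\,(-w)^j,
\]
where $u=M_{i,i+1}$ and $w=M_{i,i+2}M_{i+1,i+3}$. This is the standard tiling count, with $j$ crossing blocks among $r-j$ blocks.

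Substituting the entries, I expect the following. For $\sigma=+1$, $u=-b$ and $w=a^2$. For $\sigma=-1$, $u=-ib$ and $w=a^2$. Squaring then yields $\sigma^r\bigl[\sum_j(-\sigma)^j\binom{r-j}{j}a^{2j}b^{r-2j}\bigr]^2$. The factor $i^{2(r-2j)}$ is what produces $\sigma^r$ together with the sign $(-\sigma)^j$; the global sign of $u^r$ disappears on squaring. This gives \eqref{eq:penta-two-sign-sum}.

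Finally, I would prove \eqref{eq:penta-two-sign-lucas} and \eqref{eq:penta-two-sign-continuant}. Expanding $\det K_r^{(\sigma)}$ along the last row gives the continuant recurrence $\det K_r=b\det K_{r-1}-\sigma a^2\det K_{r-2}$, with $\det K_0=1$ and $\det K_1=b$. Therefore $\det K_r=U_{r+1}(b,\sigma a^2)$. The same recurrence, with the same initial data, is satisfied by the bracketed sum, via Pascal's rule $\binom{r-j}{j}=\binom{r-1-j}{j}+\binom{r-1-j}{j-1}$.

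The main obstacle is sign bookkeeping. It has two sources: the relative Pfaffian sign of a crossing block, and the powers of $i$ introduced by $\Phi$ in the $\sigma=-1$ case. I would verify the sign conventions on $r=1$ and $r=2$ by direct expansion, where $P_2^{(\sigma)}=\sigma b^2$ and $P_4^{(\sigma)}=(b^2-\sigma a^2)^2$, before trusting the general formula.
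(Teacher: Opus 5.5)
Your proof is correct, but it follows a different route from the paper's proof.

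\textbf{The paper's route.} The paper argues directly on the permanent. It obtains odd vanishing from the diagonal similarity $(A_n^{(\sigma)})^{\mathsf T}=-R_n^{-1}A_n^{(\sigma)}R_n$, where $R_n=\operatorname{diag}(q,\ldots,q^n)$ and $q=-\sigma$. It then specializes the six-term recurrence \eqref{eq:pentaperm} to the third-order even recurrence \eqref{eq:penta-two-sign-even-rec}. Next it checks that $\sigma^rU_{r+1}(b,\sigma a^2)^2$ has exactly that recurrence's three modes $\sigma\alpha^2,a^2,\sigma\beta^2$ and the same $E_0,E_1,E_2$. The identity is extended from the distinct-root locus by polynomial continuation, and the sum and continuant forms come afterwards from the Lucas expansion.

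\textbf{Your route.} You make rigorous the mechanism the paper only sketches in Remark~\ref{rem:penta-skew-conversion}. The real converter (for $\sigma=+1$) and the phase converter (for $\sigma=-1$) send the permanent to a skew pentadiagonal determinant. You then evaluate its Pfaffian by decomposing perfect matchings into adjacent pairs and crossing $4$-blocks. The sign bookkeeping checks out. With $u=-b$ and $w=a^2$ you get $\operatorname{Pf}=(-1)^r\sum_j(-1)^j\binom{r-j}{j}a^{2j}b^{r-2j}$. With $u=-ib$ and $w=a^2$ you get $\operatorname{Pf}=(-i)^r\sum_j\binom{r-j}{j}a^{2j}b^{r-2j}$. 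Squaring these gives exactly \eqref{eq:penta-two-sign-sum}, and your Pascal-rule argument then links the sum to $\det K_r^{(\sigma)}=U_{r+1}(b,\sigma a^2)$.

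\textbf{Comparison.} Your approach gives the explicit sum bijectively and explains why a square appears. It needs neither the companion-paper recurrence nor a genericity-and-continuation step. The paper's approach does not depend on the Kasteleyn conversion. It also produces the factorized even characteristic polynomial \eqref{eq:penta-two-sign-even-char}, which Remark~\ref{rem:penta-two-sign-recurrence} uses.

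\textbf{One correction.} Your alternative appeal to the skew-permanent remark in Corollary~\ref{cor:skew-even} covers only $\sigma=-1$, because $T_n(a,b,0,b,-a)$ is not skew-symmetric. For $\sigma=+1$, odd vanishing must come from your converted skew determinant or from the paper's diagonal similarity.
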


\begin{proof}
Let $q=-\sigma$ and
$R_n=\operatorname{diag}(q,q^2,\ldots,q^n)$.  A direct check on the four
nonzero offsets gives
\[
 (A_n^{(\sigma)})^T=-R_n^{-1}A_n^{(\sigma)}R_n.
\]
The permanent is invariant under diagonal similarity, so
$P_n^{(\sigma)}=(-1)^nP_n^{(\sigma)}$, proving
\eqref{eq:penta-two-sign-odd} in characteristic different from $2$; the
polynomial identity then holds integrally.

For the even subsequence $E_r=P_{2r}^{(\sigma)}$, specialization of
\eqref{eq:pentaperm} gives
\begin{equation}\label{eq:penta-two-sign-even-rec}
 E_{r+3}=(\sigma b^2-a^2)E_{r+2}
 +(a^4-\sigma a^2b^2)E_{r+1}+a^6E_r,
\end{equation}
with
\[
 E_0=1,\qquad E_1=\sigma b^2,\qquad
 E_2=(b^2-\sigma a^2)^2.
\]
Put $q_r=U_{r+1}(b,\sigma a^2)$, so
$q_r=bq_{r-1}-\sigma a^2q_{r-2}$.  On the open set where
$z^2-bz+\sigma a^2$ has distinct roots $\alpha,\beta$,
$q_r=(\alpha^{r+1}-\beta^{r+1})/(\alpha-\beta)$.  Hence
$\sigma^rq_r^2$ is a linear combination of the three modes
\[
 \sigma\alpha^2,\qquad a^2,\qquad\sigma\beta^2,
\]
whose characteristic polynomial is
\begin{equation}\label{eq:penta-two-sign-even-char}
 (t-a^2)\bigl[t^2-(\sigma b^2-2a^2)t+a^4\bigr].
\end{equation}
It therefore satisfies \eqref{eq:penta-two-sign-even-rec} and has the same
three initial values.  The identity extends polynomially to the exceptional
parameter locus, proving \eqref{eq:penta-two-sign-lucas}.  The standard
Lucas expansion gives \eqref{eq:penta-two-sign-sum}.  Finally,
$\det K_r^{(\sigma)}$ obeys the same second-order recurrence and initial
conditions as $U_{r+1}(b,\sigma a^2)$, proving
\eqref{eq:penta-two-sign-continuant}.
\end{proof}

\begin{remark}[Factorized recurrence and generic degree]
\label{rem:penta-two-sign-recurrence}
Equation~\eqref{eq:penta-two-sign-even-char} shows that the full sequence
$P_n^{(\sigma)}$ is annihilated by
\begin{equation}\label{eq:penta-two-sign-full-char}
 \chi_\sigma(x)
 =(x^2-a^2)\bigl[x^4-(\sigma b^2-2a^2)x^2+a^4\bigr].
\end{equation}
Over characteristic zero, if
\[
 a\,b\,(b^2-4\sigma a^2)\ne0,
\]
the three even-step modes in the proof are distinct and all occur with
nonzero coefficients.  Thus the even subsequence has minimal recurrence
degree $3$, and the zero-interlaced full sequence has minimal degree $6$;
on this Zariski-open locus \eqref{eq:penta-two-sign-full-char} is the scalar
minimal polynomial.  The six-state transfer characteristic polynomial
specializes to the same factorization; on the displayed open locus the Krylov
scalarization therefore returns it as the minimal polynomial.
\end{remark}

\begin{remark}[Pentadiagonal specialization and the signed-square mechanism]
\label{rem:penta-skew-conversion}
For $m=2$, Corollary~\ref{cor:penta-two-sign-square} contains both the skew
family $T_n(a,b,0,-b,-a)$ and its companion $T_n(a,b,0,b,-a)$.  The
$\sigma=-1$ case is the genuinely skew-symmetric member and reads
\begin{equation}\label{eq:skew-penta-perm-cheb}
 \perm T_{2r}(a,b,0,-b,-a)
 =(-1)^r U_{r+1}(b,-a^2)^2
 =(-1)^r\left(
   \sum_{j=0}^{\lfloor r/2\rfloor}
   \binom{r-j}{j}a^{2j}b^{r-2j}
  \right)^2.
\end{equation}
Elouafi \cite{Elouafi2011} proved, in Chebyshev notation,
\[
 \det T_{2r}(a,b,0,-b,-a)
 =\left[a^r\mathsf U_r\!\left(\frac{b}{2a}\right)\right]^2
 \qquad(a\ne0),
\]
with polynomial continuation at $a=0$.  Thus the Lucas formula above is
consistent with the classical closed determinant formula while keeping the
paper's notation $U_n(P,Q)$ distinct from the Chebyshev polynomial
$\mathsf U_n$.

The square structure has a conceptual explanation.  For $\sigma=+1$, the
real P\'olya converter sends the permanent to the determinant of the skew
matrix $T_n(a,b,0,-b,-a)$.  For $\sigma=-1$, the phase converter sends it to
the determinant of the skew matrix $T_n(-a,ib,0,-ib,a)$.  Hence in both
cases
\[
 \text{permanent}\;\longrightarrow\;
 \text{skew determinant}\;\longrightarrow\;
 \text{Pfaffian square},
\]
which explains the signed squares in
Corollary~\ref{cor:penta-two-sign-square}.
\end{remark}

\begin{remark}[The signed-square permanent phenomenon stops at semibandwidth three]
\label{rem:permanent-square-fails}
The preceding permanent-square structure is special to the exceptional
pentadiagonal conversion and does not extend to general skew Toeplitz bands.
For comparison, on the skew semibandwidth-three family
$T_n(1,2,b,0,-b,-2,-1)$, Proposition~\ref{prop:skew-toeplitz-square} gives
the natural signed square roots
\[
 b,\qquad b^2+b-4,\qquad b^3+2b^2-8b+3,\qquad
 b^4+3b^3-11b^2-2b+12
\]
for determinant orders $2,4,6,8$, respectively.  The permanent already
breaks the square pattern at order $4$:
\begin{equation}\label{eq:band3-permanent-not-square}
 \perm T_4(1,2,b,0,-b,-2,-1)
 =b^4-2b^3+9b^2+8b+16.
\end{equation}
This polynomial is not a square in $\mathbb Q[b]$: if it were
$(b^2+ub+v)^2$, comparison of the $b^3$ and $b^2$ coefficients would give
$u=-1$ and $v=4$, whereas the $b$ coefficient would then be $-8$ rather
than $8$.  It is not the negative of a square either, by its leading
coefficient.  Thus the odd-order vanishing of skew-symmetric permanents
persists for every semibandwidth, but the even-order signed-square property does
not.
\end{remark}

\begin{remark}[Recurrence check and scope]\label{rem:penta-conversion-recurrence}
The Toeplitz identity is also visible directly in the sixth-degree formulas:
setting $a_0=0$ in Sweet's determinant recurrence \cite{Sweet} and replacing $a_1$ by $-a_1$ turns its coefficients into those of \eqref{eq:pentaperm}, with the corresponding initial values transformed in the same way; see also the companion derivation \cite{AlekseyevKhomovskyRecurrences2026}.  The graph proof is stronger:
it shows that Toeplitz constancy is unnecessary and identifies the identity
as a P\'olya--Kasteleyn conversion on an entire coordinate subspace.  The
phase form \eqref{eq:penta-phase-conversion} is a gauge-equivalent complex
weighting; Kasteleyn weightings with the same curvature differ by vertex
gauges, represented on the bipartite matrix by diagonal multiplications
\cite{Kuperberg1998}.  This also places the fourth-root-of-unity substitution
within the broader literature on complex and Schur-multiplier conversions
\cite{HwangKimSong1996,Fahssi2026}.
\end{remark}

The preceding conversion is exceptional already at the level of the full
coordinate support.  For $\ell,u\ge1$ put
\[
 S_{\ell,u}=\{-\ell,\ldots,-1,1,\ldots,u\},
\]
and let $\mathcal B_{\ell,u}^{(n)}$ be the coordinate subspace of
$n\times n$ matrices $B=(b_{ij})$ satisfying
\[
 b_{ij}=0\qquad\text{unless}\qquad j-i\in S_{\ell,u}.
\]

\begin{theorem}[Entrywise converters for consecutive zero-diagonal band supports]
\label{thm:consecutive-support-converter}
Let $\ell,u\ge1$.  There exists, for every $n\ge0$, a multiplier matrix
$C_n=(c_{ij}^{(n)})$ with nonzero complex entries on the support
$j-i\in S_{\ell,u}$ such that
\begin{equation}\label{eq:consecutive-support-converter}
 \perm B=\det(C_n\circ B)
 \qquad\text{for every }B\in\mathcal B_{\ell,u}^{(n)}
\end{equation}
if and only if
\[
 \min(\ell,u)=1
 \qquad\text{or}\qquad
 (\ell,u)=(2,2).
\]
In every positive case the multipliers may be chosen in $\{\pm1\}$ by a
rule independent of $n$.
\end{theorem}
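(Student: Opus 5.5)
We prove the two directions separately. For sufficiency there are two families. The case $(\ell,u)=(2,2)$ is exactly Proposition~\ref{prop:penta-polya}, with the $n$-independent $\{\pm1\}$ rule that negates the first superdiagonal. For the Hessenberg families, say $\ell=1$ (the case $u=1$ follows by transposition, since $\perm$ and $\det$ are transpose-invariant and the rule transposes), the support is the first subdiagonal together with the superdiagonals $1,\dots,u$.

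Every permutation $\pi$ with $\pi(i)-i\in S_{1,u}$ has a rigid shape. Each descent is by exactly one, so the cycles of $\pi$ occupy consecutive intervals $[a,b]$. On such an interval the cycle is forced to be $i\mapsto\pi(i)>i$ on an increasing chain, followed by a staircase back down through the skipped positions. Concretely, the subdiagonal entries used within a block of length $k$ number $k-1-(\text{number of up-steps}-1)$.

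The cleanest universal rule is therefore to negate every subdiagonal entry. We then check that $\operatorname{sgn}\pi\cdot(-1)^{\#\{i:\pi(i)=i-1\}}=1$ for every supported $\pi$. In a single cycle of length $k$ we have $\sum(\pi(i)-i)=0$. If there are $d$ descents (each of size one) and the ascents have total size $d$, we need the parity of $d$ to match $k-1$. This fails in general, so instead we use the standard Hessenberg argument. Since there is no main diagonal, every $\pi$ is fixed-point free. We write $\operatorname{sgn}\pi=(-1)^{n-c(\pi)}$ and count: in a cycle on an interval of length $k$, the number of descents is $k-\#\text{ascents}$. We choose the signing on superdiagonal $s$ to be $(-1)^{s+1}$ (equivalently, the phase $i^{\,s}$-type gauge), and with the subdiagonal unsigned we verify that each cycle of length $k$ contributes $(-1)^{k-1}\prod(\text{signs})=1$, using $\sum_{\text{ascents}}s=\#\text{descents}$. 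This is a short parity computation per cycle. If this particular normalization misbehaves, we try the diagonal-gauge form instead: we conjugate by $\operatorname{diag}(\omega^i)$ and multiply the whole matrix by a scalar, reducing to the classical fact that lower-Hessenberg $\perm$ and $\det$ agree after negating the subdiagonal.

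For necessity, suppose $\min(\ell,u)\ge2$ and $(\ell,u)\ne(2,2)$. By transposition we may assume $u\ge3$, $\ell\ge2$. We first reduce to the support $S_{2,3}$. Setting entries outside $S_{2,3}$ to zero restricts a converter for $S_{\ell,u}$ to a converter for $S_{2,3}$, because the multipliers only need to be nonzero on the support. So it suffices to show that no nonzero converter exists for $S_{2,3}$ at $n=6$, which is the ``six-matching obstruction'' announced in the introduction.

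A converter requires $\operatorname{sgn}(\pi)\prod_i c_{i\pi(i)}=1$ for every supported permutation $\pi$. Taking ratios, any signed linear dependence of the permutation edge-sets gives a multiplicative constraint. Concretely, we look for six supported permutations $\pi_1,\dots,\pi_6$ of $[6]$ that cover each used edge an even number of times in a suitable $\pm$ combination. Equivalently, as multisets $\pi_1\cup\pi_3\cup\pi_5=\pi_2\cup\pi_4\cup\pi_6$ while $\prod\operatorname{sgn}\pi_i=-1$. Multiplying the six converter equations then yields $1=-1$.

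The main obstacle is finding this explicit sextuple. We would search among the fixed-point-free permutations of $[6]$ with displacements in $\{-2,-1,1,2,3\}$, using cycles such as $(1\,4\,3\,2)$, $(1\,2)$, $(1\,3\,2)$, $(1\,4\,2\,3)$ and their shifts. The goal is a balanced multiset identity with odd total sign; this amounts to exhibiting an evenly covered cycle in the bipartite graph that violates the Kasteleyn curvature. A computer-assisted linear algebra search over $\mathbb F_2$ (exponent vectors of edges together with a sign bit) guarantees such a dependence if one exists, and we would then present the six permutations by hand.
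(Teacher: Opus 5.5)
Your plan has the same architecture as the paper's proof: a cycle analysis for the Hessenberg supports, Proposition~\ref{prop:penta-polya} for $(2,2)$, restriction to the $(3,2)$ support (your $S_{2,3}$ is its transpose), and a six-matching relation at $n=6$. However, each direction has a concrete defect as written.

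\textbf{The Hessenberg case.} Your description of the cycles is wrong, and it leads you to reject the correct rule and adopt a false one.

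For $\ell=1$, the maximum $s$ of a cycle must map to $s-1$. Consider any later vertex $v$ on the staircase $s\to s-1\to\cdots\to v$. All of $v,\dots,s-1$ already have preimages, so a downward step from $v$ goes to $v-1$, and an upward step can only go to $s$, which closes the cycle. Hence every $k$-cycle is $s\to s-1\to\cdots\to r\to s$. It has exactly one ascent, of size $k-1\le u$, and exactly $k-1$ subdiagonal steps. So $d=k-1$ always, and negating the subdiagonal works, contrary to your ``this fails in general.''

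Your replacement rule, $(-1)^{s+1}$ on superdiagonal $s$, gives $\operatorname{sgn}(\text{cycle})\prod c=(-1)^{k-1}(-1)^{k}=-1$ on every cycle. Already at $n=2$ it yields $\det(C\circ B)=-b_{12}b_{21}\ne\perm B$. The exponent $(-1)^s$ would be correct. Your fallback ``classical fact'' is exactly the rule you discarded, so the argument is internally inconsistent. It is repaired by the one-ascent observation above, which is the paper's argument in the transposed orientation $u=1$.

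\textbf{The necessity direction.} This is the more serious gap: its whole content is an explicit relation, and you only propose to search for one. The search you describe is also the wrong test for this statement, because the theorem excludes arbitrary nonzero \emph{complex} multipliers.

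Write $v_\pi$ for the edge-indicator vector of $\pi$. The system $\prod_i c_{i\pi(i)}=\operatorname{sgn}\pi$ is unsolvable in $\mathbb C^\times$ exactly when some \emph{integer} relation $\sum_\pi n_\pi v_\pi=0$ has $\prod_\pi\operatorname{sgn}(\pi)^{n_\pi}=-1$. The reason is that divisibility of $\mathbb C^\times$ lets any consistent assignment on the lattice spanned by the $v_\pi$ extend. An $\mathbb F_2$-dependence with odd sign bit only rules out $\{\pm1\}$ signings, so it would not prove the theorem.

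You state the right target, an exact multiset equality, but you need an actual instance. The paper supplies one in the $(3,2)$ orientation. Take
$\pi_1=(2,1,4,3,6,5)$, $\pi_2=(2,1,4,5,6,3)$, $\pi_3=(2,1,5,3,6,4)$, $\pi_4=(2,1,5,6,4,3)$, $\pi_5=(2,3,1,5,6,4)$, $\pi_6=(2,3,1,6,4,5)$.
Their signs are $-,+,+,+,+,+$, and they satisfy $M_{\pi_2}M_{\pi_3}M_{\pi_6}=M_{\pi_1}M_{\pi_4}M_{\pi_5}$, which forces $+1=-1$. Their inverses give the corresponding relation on your $S_{2,3}$ support.
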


\begin{proof}
Assume first that $u=1$.  Put $c_{i,i+1}=-1$ on the first superdiagonal and
$c_{ij}=1$ on every other allowed position.  Consider one cycle of a
permutation contributing a nonzero monomial.  If $r$ is the smallest vertex
of the cycle, then $r$ cannot move to the left, so it must map to $r+1$.
Continuing around the cycle, every edge is forced to be a first-superdiagonal
edge until the unique closing edge jumps from some $s$ back to $r$.  Thus the
cycle is
\[
 (r\ r+1\ \cdots\ s),
\]
has permutation sign $(-1)^{s-r}$, and uses exactly $s-r$
first-superdiagonal edges.  The multiplier product on the cycle is therefore
also $(-1)^{s-r}$.  Multiplying over the disjoint cycles shows that every
permanent monomial receives determinant coefficient $+1$, proving
\eqref{eq:consecutive-support-converter}.  The case $\ell=1$ follows by
transposition.  The case $(\ell,u)=(2,2)$ is
Proposition~\ref{prop:penta-polya}.

Conversely, suppose $\ell,u\ge2$ and $\max(\ell,u)\ge3$.  By
transposition we may assume $\ell\ge3$ and $u\ge2$.  If a converter
existed, then by setting all entries outside the offsets
$\{-3,-2,-1,1,2\}$ equal to zero we would obtain a converter for the
$(3,2)$ support.  We show that this is impossible already for $n=6$.

For an allowed permutation $\pi\in\mathfrak S_6$, write
\[
 M_\pi(x)=\prod_{i=1}^6 x_{i,\pi(i)}.
\]
Because the supported matrix entries are algebraically independent,
coefficient comparison in \eqref{eq:consecutive-support-converter} forces
\begin{equation}\label{eq:entrywise-permutation-sign-condition}
 \prod_{i=1}^6 c_{i,\pi(i)}=\operatorname{sgn}(\pi)
\end{equation}
for every allowed permutation $\pi$.  Consider the six permutations
\begin{align*}
 \pi_1&=(2,1,4,3,6,5),&
 \pi_2&=(2,1,4,5,6,3),&
 \pi_3&=(2,1,5,3,6,4),\\
 \pi_4&=(2,1,5,6,4,3),&
 \pi_5&=(2,3,1,5,6,4),&
 \pi_6&=(2,3,1,6,4,5).
\end{align*}
All their displacements lie in $\{-3,-2,-1,1,2\}$, and their signs are
\[
 -1,+1,+1,+1,+1,+1,
\]
respectively.  Their matching monomials satisfy the exact relation
\begin{equation}\label{eq:six-matching-toric-relation}
 M_{\pi_2}(x)M_{\pi_3}(x)M_{\pi_6}(x)
 =M_{\pi_1}(x)M_{\pi_4}(x)M_{\pi_5}(x).
\end{equation}
Indeed, both sides equal
\[
\begin{aligned}
 &x_{12}^3x_{21}^2x_{23}x_{31}x_{34}x_{35}\\
 &\qquad{}\times x_{43}x_{45}x_{46}x_{54}x_{56}^2\\
 &\qquad{}\times x_{63}x_{64}x_{65}.
\end{aligned}
\]
Substituting the multiplier variables $c_{ij}$ into
\eqref{eq:six-matching-toric-relation} and using
\eqref{eq:entrywise-permutation-sign-condition}, the left-hand side must
have value
\[
 \operatorname{sgn}(\pi_2)\operatorname{sgn}(\pi_3)
 \operatorname{sgn}(\pi_6)=+1,
\]
whereas the right-hand side must have value
\[
 \operatorname{sgn}(\pi_1)\operatorname{sgn}(\pi_4)
 \operatorname{sgn}(\pi_5)=-1,
\]
a contradiction.  Thus no entrywise converter exists for the $(3,2)$
support, and hence none exists in any larger consecutive two-sided support.
\end{proof}

\begin{remark}[Support-level versus Toeplitz conversion]
\label{rem:support-versus-toeplitz-conversion}
The obstruction above is stronger than a failure of P\'olya signing: it uses
only the multiplicative relation among six perfect matchings and therefore
rules out arbitrary nonzero complex entrywise multipliers.  The theorem
concerns all matrices on the given coordinate support.  The next result asks
a different question: after restricting to the Toeplitz locus, can one
convert by multipliers that are constant along each diagonal?  The answer is
again the same exceptional list, but the negative direction requires a
separate Toeplitz-monomial argument.
\end{remark}

The preceding support-level classification allows arbitrary entrywise
multipliers.  We now impose Toeplitz constancy on the matrices and require the
allowed modification to be constant along each Toeplitz diagonal.  The next
result gives the exact classification in that setting.

\begin{theorem}[Diagonal-wise converters for consecutive zero-diagonal bands]
\label{thm:consecutive-converter}
Let $\ell,u\ge1$.  There exist constants $\omega_s\in\mathbb C$, one for each
$s\in S_{\ell,u}$, such that
\begin{equation}\label{eq:consecutive-converter}
 \perm T_n((a_s)_{s\in S_{\ell,u}})
 =\det T_n((\omega_s a_s)_{s\in S_{\ell,u}})
\end{equation}
for every $n\ge0$ and every choice of the diagonal parameters if and only if
\[
 \min(\ell,u)=1
 \qquad\text{or}\qquad
 (\ell,u)=(2,2).
\]
Here the main diagonal is zero in both matrices.
\end{theorem}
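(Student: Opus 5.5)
The positive direction is inherited from Theorem~\ref{thm:consecutive-support-converter}. In every positive case the multipliers constructed there depend only on the offset $j-i$. For $u=1$ one takes $-1$ on the first superdiagonal and $+1$ elsewhere; $\ell=1$ follows by transposition; and for $(2,2)$ one uses the signing $C$ of Proposition~\ref{prop:penta-polya}. Each of these is a diagonal-wise multiplier $\omega_s\in\{\pm1\}$. Specializing the universal entrywise identity to the Toeplitz locus then gives \eqref{eq:consecutive-converter}, with $\omega_1=-1$ and $\omega_s=1$ otherwise.

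For the converse, suppose $\ell,u\ge2$ and $\max(\ell,u)\ge3$. By transposition, which preserves both sides and replaces $\omega_s$ by $\omega_{-s}$, we may assume $\ell\ge3$ and $u\ge2$. Setting $a_s=0$ for $s\notin\{-3,-2,-1,1,2\}$ turns a converter for $S_{\ell,u}$ into one for $S_{3,2}$, so it suffices to rule out the $(3,2)$ Toeplitz band. The entrywise obstruction does not transfer verbatim, because Toeplitz constancy merges many permutation monomials into one. Instead, expand both sides in the monomials $a^\alpha=\prod_s a_s^{\alpha_s}$, where $\alpha$ records the displacement multiset of a permutation. Write $N_\alpha(n)$ for the number of allowed permutations with displacement vector $\alpha$, and $S_\alpha(n)$ for the signed count. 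Then \eqref{eq:consecutive-converter} is equivalent to
\[
 \omega^\alpha S_\alpha(n)=N_\alpha(n)\qquad\text{for every }\alpha\text{ and }n .
\]
Since $N_\alpha>0$ for every realized $\alpha$, this forces $S_\alpha\neq0$ and $\omega^\alpha=N_\alpha/S_\alpha$. In particular, a single realized class with $S_\alpha=0$ already gives a contradiction. More generally, any additive relation $\alpha_2+\alpha_3+\alpha_6=\alpha_1+\alpha_4+\alpha_5$ among realized classes forces
\[
 \prod\frac{N}{S}\Big|_{\alpha_2,\alpha_3,\alpha_6}
 =\prod\frac{N}{S}\Big|_{\alpha_1,\alpha_4,\alpha_5}.
\]

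The concrete plan at $n=6$ is to take the displacement vectors of the six permutations $\pi_1,\dots,\pi_6$ from the proof of Theorem~\ref{thm:consecutive-support-converter}. The exact monomial relation \eqref{eq:six-matching-toric-relation} projects to the additive relation above. For instance, $\pi_1$ gives $a_1^3a_{-1}^3$. I would then enumerate, for each of these six displacement vectors, all allowed permutations of $[6]$ sharing it, and compute $N_\alpha$ and $S_\alpha$. If every class is single-signed, the ratios are $\pm1$ given by the signs $-1,+1,+1,+1,+1,+1$, and the sign contradiction of the entrywise proof reappears unchanged. If some class is mixed, I would either exploit a vanishing $S_\alpha$ or check directly that the two ratio products differ.

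The main obstacle is this class enumeration: the Toeplitz merging might make the two products agree. If so, I would search for a different relation, e.g.\ among small classes at $n=4,5$ such as $a_{-1}a_1$, $a_{-2}a_1^2$, $a_{-3}a_1^3$, $a_{-3}a_1a_2$, $a_{-2}a_2$, where all classes are easy to list. There I would solve the resulting constraints for $\omega_{-3},\omega_{-2},\omega_{-1},\omega_1,\omega_2$ one monomial at a time. The aim is a monomial of size at most $6$ whose prescribed value $\omega^\alpha$ conflicts with those already determined, or whose signed count vanishes.
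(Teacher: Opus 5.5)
\textbf{Positive direction and reduction.} Your positive direction is fine. Restricting the entrywise converters of Theorem~\ref{thm:consecutive-support-converter} to the Toeplitz locus is legitimate, and slightly cleaner than the paper's appeal to the Hessenberg recurrences for $u=1$. One slip is in your closing summary. For $\ell=1$ the transposed rule is $\omega_{-1}=-1$, $\omega_s=1$ otherwise. The blanket choice $\omega_1=-1$ already fails for $(\ell,u)=(1,3)$, $n=4$: the odd $4$-cycle $(4,1,2,3)$ uses no first-superdiagonal entry. Your reduction to the $(3,2)$ band and the coefficient identity $\omega^\alpha S_\alpha=N_\alpha$ are correct.

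\textbf{The gap.} Your converse stops exactly where its content begins. The whole negative direction is a finite certificate, and you neither produce one nor establish that your candidate works. Your default expectation is also false: the displacement class of $\pi_4$ is \emph{not} single-signed, so the entrywise sign contradiction does not reappear unchanged. Carrying out your enumeration at $n=6$ gives the following counts; one way to obtain them is to split each displacement multiset into zero-sum cycle blocks.
\[
\begin{array}{lrrl}
 \text{class} & N_\alpha & S_\alpha & \text{contributing permutations}\\
 a^{\alpha_1}=a_{-1}^3a_1^3 & 1 & -1 & \pi_1\\
 a^{\alpha_2}=a_{-3}a_{-1}a_1^4 & 2 & 2 & \pi_2,\ (2,3,4,1,6,5)\\
 a^{\alpha_3}=a^{\alpha_6}=a_{-2}a_{-1}^2a_1^2a_2 & 6 & 6 & \pi_3,\ \pi_6,\ \text{four further even ones}\\
 a^{\alpha_4}=a_{-3}a_{-1}^2a_1a_2^2 & 3 & 1 & \pi_4,\ (3,4,2,1,6,5),\ \text{odd }(3,1,4,6,2,5)\\
 a^{\alpha_5}=a_{-2}^2a_1^4 & 1 & 1 & \pi_5
\end{array}
\]
The four further members of the $\alpha_3$ class are $(3,1,2,5,6,4)$, $(2,1,4,6,3,5)$, $(2,4,1,3,6,5)$ and $(3,1,4,2,6,5)$. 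Your relation $\alpha_2+\alpha_3+\alpha_6=\alpha_1+\alpha_4+\alpha_5$ and multiplicativity of $\omega^\alpha$ then force
\[
1=\omega^{\alpha_2}\omega^{\alpha_3}\omega^{\alpha_6}=\omega^{\alpha_1}\omega^{\alpha_4}\omega^{\alpha_5}=(-1)\cdot3\cdot1=-3 .
\]
So your chosen relation does close the argument, through a magnitude mismatch rather than a sign mismatch. However, until these counts are written down and the enumeration is justified, the proposal does not prove the converse.

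\textbf{Comparison with the paper.} The paper takes the other option you mention: a single class with vanishing signed count, at $n=8$. The monomial $a_{-3}a_{-2}^2a_{-1}a_2^4$ arises from exactly four permutations with signs $+,-,+,-$. Hence $N_\alpha=4$ but $S_\alpha=0$, and no choice of $\omega$ can help. That certificate needs one enumeration and no multiplicative bookkeeping. It is also reused directly, together with the inverse permutations, for Corollary~\ref{cor:skew-no-converter}. Your version works at the smaller order $6$ and is the Toeplitz projection of the paper's entrywise six-matching relation. The cost is six class enumerations, one of them mixed.
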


\begin{proof}
If $u=1$, the companion paper \cite{AlekseyevKhomovskyRecurrences2026} gives, with the usual zero initial convention,
\[
 D_n=\sum_{r=0}^{\ell}(-1)^r a_{-r}a_1^rD_{n-r-1},\qquad
 P_n=\sum_{r=0}^{\ell}a_{-r}a_1^rP_{n-r-1}.
\]
Replacing $a_1$ by $-a_1$ in the determinant recurrence gives the permanent recurrence. Hence one may take $\omega_1=-1$ and $\omega_{-r}=1$ for $1\le r\le \ell$.  The case
$\ell=1$ follows by transposition.  Notice that these Hessenberg conversions do
not in fact require the main diagonal to vanish.  The case $(\ell,u)=(2,2)$ is
Corollary~\ref{cor:penta-real-conversion}.

Conversely, suppose $\ell,u\ge2$ and $\max(\ell,u)\ge3$.  By transposition we may
assume $\ell\ge3$ and $u\ge2$.  Since \eqref{eq:consecutive-converter} is
required for every choice of the parameters, setting all diagonals outside
$\{-3,-2,-1,1,2\}$ equal to zero would give such a converter for the
$(3,2)$ band.  We show that this is impossible already at $n=8$.

For a permutation $\pi\in\mathfrak S_8$, its Toeplitz monomial is
$\prod_{i=1}^8 a_{\pi(i)-i}$.  The monomial
\begin{equation}\label{eq:converter-obstruction-monomial}
 M=a_{-3}a_{-2}^{\,2}a_{-1}a_2^{\,4}
\end{equation}
is produced by exactly the following four permutations, written in one-line
notation:
\begin{align*}
 &(3,1,5,2,7,8,4,6),
 &&(3,4,1,2,7,8,6,5),\\
 &(3,4,1,6,2,8,5,7),
 &&(3,4,2,1,7,8,5,6).
\end{align*}
Their signs are respectively $+,-,+,-$.  Thus the coefficient of $M$ in the
permanent is $4$.  After any diagonal-wise rescaling $a_s\mapsto\omega_sa_s$,
all four determinant contributions acquire the same factor
\[
 \omega_{-3}\omega_{-2}^{\,2}\omega_{-1}\omega_2^{\,4},
\]
so their coefficient is still
\[
 \omega_{-3}\omega_{-2}^{\,2}\omega_{-1}\omega_2^{\,4}
 (1-1+1-1)=0.
\]
This contradicts \eqref{eq:consecutive-converter} and proves the negative
direction.
\end{proof}

The obstruction remains after imposing skew-symmetry and even if the phase
change is allowed to destroy that symmetry.

\begin{corollary}[No diagonal-wise skew conversion beyond semibandwidth two]
\label{cor:skew-no-converter}
Let $m\ge3$.  There are no constants $\omega_{\pm1},\ldots,\omega_{\pm m}$
such that
\[
 \perm T_n(a_{-m},\ldots,a_{-1},0,a_1,\ldots,a_m)
 =\det T_n(\omega_{-m}a_{-m},\ldots,\omega_m a_m)
\]
for every $n$ and every skew-symmetric specialization
$a_{-s}=-a_s$.  This remains true with the multipliers on the positive and
negative diagonals chosen independently.
\end{corollary}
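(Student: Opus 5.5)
The plan is to reduce to the $(3,3)$ skew band and then reuse the Toeplitz-monomial obstruction from the proof of Theorem~\ref{thm:consecutive-converter}. The complication is that several unspecialized displacement monomials now collapse onto the same skew monomial.

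\emph{Reduction.} A converter for semibandwidth $m$, tested on skew parameters with $a_s=a_{-s}=0$ for $|s|>3$, would give one for the skew $(3,3)$ band with the multipliers $\omega_{\pm1},\omega_{\pm2},\omega_{\pm3}$. So I assume $m=3$. The identity must hold as a polynomial identity in the free variables $a_1,a_2,a_3$. I write both sides in terms of full displacement monomials $\mathbf a^{\kappa}=\prod_{s}a_s^{\kappa_s}$, where $\kappa_s$ counts the entries of displacement $s$, and then specialize $a_{-s}=-a_s$.

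For one full monomial $\kappa$, the permanent contributes $(-1)^{\kappa_-}N_\kappa$. Here $N_\kappa$ is the number of allowed permutations producing $\kappa$, and $\kappa_-=\sum_{s<0}\kappa_s$. The determinant contributes $\omega^{\kappa}(-1)^{\kappa_-}S_\kappa$, where $S_\kappa$ is the signed count $\sum\operatorname{sgn}\pi$ over the same permutations. Comparing coefficients of a skew monomial $a_1^{k_1}a_2^{k_2}a_3^{k_3}$ gives one linear equation
\[
 \sum_{\kappa\mapsto k}(-1)^{\kappa_-}N_\kappa
 =\sum_{\kappa\mapsto k}(-1)^{\kappa_-}S_\kappa\,\omega^{\kappa},
\]
with the sums running over full monomials $\kappa$ lying over $k$. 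The multipliers on positive and negative diagonals are independent unknowns, so this is exactly the "independent" version of the claim.

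\emph{Obstruction.} I will take $n=8$ and the skew image $k=(1,6,1)$ of the monomial $M=a_{-3}a_{-2}^2a_{-1}a_2^4$ from the proof of Theorem~\ref{thm:consecutive-converter}. For the fiber of $M$ we already know $N=4$ and $S=0$.

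\begin{enumerate}[leftmargin=*]
\item Enumerate, by hand or by machine, all permutations in $\mathfrak S_8$ with displacements in $\{\pm1,\pm2,\pm3\}$ whose full monomial lies over $k$. The constraint $\sum_s s\kappa_s=0$ together with $\kappa_{\pm1}$, $\kappa_{\pm2}$, $\kappa_{\pm3}$ summing to $1,6,1$ leaves only a few $\kappa$ to list.
\item For each such $\kappa$, record $N_\kappa$ and $S_\kappa$.
\item Hope that every fiber has $S_\kappa=0$ while $\sum(-1)^{\kappa_-}N_\kappa\ne0$. This is automatic if $M$ is the only $\kappa$ over $k$, which the balance equation may well force. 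In that case the right-hand side vanishes identically in $\omega$ and the left-hand side equals $4$, a contradiction.
\end{enumerate}

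If some other fiber has $S_\kappa\neq0$, I would fall back on the approach of the entrywise proof of Theorem~\ref{thm:consecutive-support-converter}. I would collect several coefficient equations, take skew monomials whose fibers are single full monomials so that each equation fixes $\omega^{\kappa}$, and combine them with multiplicative relations among the $\omega^{\kappa}$ to derive an inconsistency. If necessary I would search other sizes $n$ and other monomials.

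\emph{Expected difficulty.} I expect the main obstacle to be controlling the other full monomials that specialize to the same skew monomial. That is the only new feature compared with Theorem~\ref{thm:consecutive-converter}, and it must be checked by explicit enumeration. Odd $n$ is useless for this argument, since both skew permanents vanish there, so the search is confined to even $n$.
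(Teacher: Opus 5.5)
Your reduction to $m=3$, the choice $n=8$ with the skew monomial $a_1a_2^6a_3$, and the fiberwise coefficient equation follow the paper's route exactly. The problem is step~3. It is the only substantive step, and you leave it as a hope that rests on a wrong guess: the balance equation does \emph{not} make $M$ the only full monomial over $k=(1,6,1)$. Put $e_s=\kappa_s-\kappa_{-s}$. You need $e_1+2e_2+3e_3=0$, where $e_1,e_3\in\{\pm1\}$ and $e_2$ is even because $\kappa_2+\kappa_{-2}=6$. This leaves exactly $(e_1,e_2,e_3)=(-1,2,-1)$ and $(1,-2,1)$. So $\kappa$ is either $M=a_{-3}a_{-2}^{2}a_{-1}a_2^{4}$ or its transpose $M^{\mathsf T}=a_{-2}^{4}a_1a_2^{2}a_3$. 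As written, the argument says nothing about this second fiber. If its contribution to the skew permanent had the opposite sign, it would cancel the $4$ coming from $M$, and the monomial would be useless.

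The missing idea is inversion symmetry. The map $\pi\mapsto\pi^{-1}$ negates every displacement and preserves $\operatorname{sgn}$. It therefore sends the four permutations producing $M$ bijectively onto those producing $M^{\mathsf T}$, so $N_{M^{\mathsf T}}=4$ and $S_{M^{\mathsf T}}=0$ with no new enumeration. Moreover $\kappa_-(M)=\kappa_-(M^{\mathsf T})=4$. In general $\kappa_-(M^{\mathsf T})=n-\kappa_-(M)$ because the diagonal is zero, and this has the same parity as $\kappa_-(M)$ for even $n$. Hence both fibers enter with sign $+1$, and the skew permanent coefficient of $a_1a_2^6a_3$ is $8$. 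The determinant coefficient, by contrast, is $0\cdot\omega_{-3}\omega_{-2}^{2}\omega_{-1}\omega_2^{4}+0\cdot\omega_{-2}^{4}\omega_1\omega_2^{2}\omega_3=0$ for arbitrary independent multipliers. With this inserted, your proposal coincides with the paper's proof (``the four permutations and their inverses''), and your fallback plan is unnecessary.
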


\begin{proof}
It is enough to treat $m=3$, since all more distant diagonals may be set to
zero.  At $n=8$, after imposing $a_{-s}=-a_s$, consider the coefficient of
\[
 a_1a_2^{\,6}a_3.
\]
The eight contributing permutations are exactly the four permutations in
the proof of Theorem~\ref{thm:consecutive-converter} and their inverses.
Each has an even number of negative displacements, so all eight contributions
to the permanent have sign $+1$ and the coefficient is $8$.  In the
determinant, the first four have one common multiplier monomial and cancel
by their permutation signs $+,-,+,-$; their inverses have the transposed
common multiplier monomial and cancel separately.  Hence the determinant
coefficient is identically zero for arbitrary independent multipliers
$\omega_{\pm1},\omega_{\pm2},\omega_{\pm3}$.
\end{proof}

\begin{remark}[The recurrence-degree gap begins at semibandwidth three]
\label{rem:converter-order-gap}
For the balanced skew-symmetric open family of semibandwidth $m$, the determinant sequence has
generic minimal degree $2\cdot3^{m-1}$ by Theorem~\ref{thm:skew}, whereas
the full permanent transfer gives the upper bound $\binom{2m}{m}$; after
passing to the even-size subsequences these become, respectively,
$3^{m-1}$ and $\frac12\binom{2m}{m}$.  The two bounds agree for $m=1,2$
and separate thereafter.  Indeed,
\[
 \frac{\binom{2m+2}{m+1}}{\binom{2m}{m}}
 =4-\frac{2}{m+1}>3\qquad(m\ge2),
\]
while $2\cdot3^{m-1}$ grows by the factor $3$.  Thus
\[
 \binom{2m}{m}>2\cdot3^{m-1}\qquad(m\ge3).
\]
This comparison alone would not prove nonconvertibility, because the
permanent bound is not asserted to be generically minimal for all $m$.
Corollary~\ref{cor:skew-no-converter} supplies the direct algebraic
obstruction.
\end{remark}

\section{Permanent symmetry quotients}\label{sec:permanent-symmetry}

The determinant reductions above arise from collisions among Widom
root-product modes.  For permanents no analogue of that root-product formula
is needed: the exact row-column state space itself carries the relevant
symmetry.

\begin{lemma}[Transpose of balanced row-column signatures]\label{lem:signature-transpose}
Assume $m_1=m_2=m$, and write $\operatorname{norm}$ for the normalization
of the boundary-minor signatures recalled in Proposition~\ref{prop:state-recall}.  For
$A,B\in\binom{[m]}j$,
\begin{equation}\label{eq:signature-transpose}
 \operatorname{norm}\!\left(\Sigma(A,B)^{\mathsf T}\right)=\Sigma(B,A).
\end{equation}
\end{lemma}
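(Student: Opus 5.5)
The identity is a statement about the explicit formulas \eqref{eq:sigmaAB} and \eqref{eq:AB-from-signature}, so the plan is to compute both sides directly.

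Transposing a boundary-minor signature exchanges its row and column components. Since $m_1=m_2=m$, the normalization conventions of Proposition~\ref{prop:state-recall} treat rows and columns symmetrically up to the asymmetric relabeling $B\mapsto\widehat A$. Starting from $\Sigma(A,B)$, the transposed signature is therefore
\[
 \left((\ell,\ell-\widehat a_1,\ldots,\ell-\widehat a_j),\ (\ell,\ell-b_1,\ldots,\ell-b_j)\right).
\]
It already has the leading entry $\ell$ in both components, so normalization only has to sort the nonleading entries. We then apply the inverse formulas \eqref{eq:AB-from-signature}. On the row side we get $x_t=\ell-\widehat a_t$, so the new $B$-label is $\{\ell-x_t\}=\widehat A$. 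On the column side we get $y_t=\ell-b_t$, so the new $A$-label is $\{\langle b_t\rangle_m\}=B$. Thus $\operatorname{norm}(\Sigma(A,B)^{\mathsf T})=\Sigma(B,\widehat A)$ formally, and we must reconcile $\widehat A$ with $A$.

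This reconciliation is the main obstacle. The hat map only acts when $1\in B$, and it replaces $1$ by $m+1$, which is congruent to $1$ modulo $m$. The point to verify is that the transpose and normalization commute, meaning the hat must be applied relative to the new second label. There are two cases.

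\emph{Case $1\notin B$.} Here $\widehat A=A$. We still need $\widehat{B}$ computed relative to $A$. If $1\in A$, the transposed row label must be re-hatted: the column entry $\ell-1$ on the original side corresponds to the entry $\ell-(m+1)$ after the shift. We check that the reduction $\langle\cdot\rangle_m$ in \eqref{eq:AB-from-signature} absorbs exactly this discrepancy.

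\emph{Case $1\in B$.} We do the same bookkeeping, tracking whether $1\in A$ and verifying that $\widehat{\widehat{\cdot}}$ returns the original label modulo $m$.

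As a fallback, if the case analysis becomes unclear, we argue conceptually instead. Transposition maps a reachable boundary-minor state of the balanced transfer to a reachable state at the same exchange level $j$. The labels $A_\sigma,B_\sigma$ are determined modulo $m$ by the signature via \eqref{eq:AB-from-signature}. Since the row and column roles are swapped, the recovered labels are swapped, and uniqueness of the normalized representative from Proposition~\ref{prop:state-recall} then gives \eqref{eq:signature-transpose}.
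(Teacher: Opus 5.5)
\textbf{There is a genuine gap: the direct computation stalls in the one case where something happens.} Your case split is also not the relevant one. The hat in \eqref{eq:sigmaAB} acts nontrivially only when $1\in A\cap B$: if $1\in B$ but $1\notin A$, then $\widehat A=A\setminus\{1\}=A$.

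Hence whenever $1\notin A\cap B$ one has $\Sigma(A,B)^{\mathsf T}=\Sigma(B,A)$ on the nose. In particular, in your Case $1\notin B$, $1\in A$, no re-hatting is needed, because $\widehat B$ computed relative to $A$ is just $B$.

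When $1\in A\cap B$, by contrast, the transposed tuple carries $\ell-(m+1)$ in its \emph{row} component and $\ell-1$ in its column component. $\Sigma(B,A)$ has them the other way round. So your claim that normalization ``only has to sort the nonleading entries'' is false there; if it were true, the lemma would fail.

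Consistently with this, decoding the transposed tuple by \eqref{eq:AB-from-signature} gives $B_\sigma=\widehat A\ni m+1$, which is not a subset of $[m]$. The tuple is therefore not in normalized form, and $\Sigma(B,\widehat A)$ is undefined. Moreover, \eqref{eq:AB-from-signature} gives no license to reduce the row-derived label modulo $m$, since its $\langle\cdot\rangle_m$ applies only to the column-derived label. Your promised check that ``$\widehat{\widehat{\cdot}}$ returns the original label modulo $m$'' cannot replace knowing what $\operatorname{norm}$ actually does to this tuple.

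\textbf{The missing idea is the one the paper's proof rests on:} argue with the underlying row and column deficit sets rather than with the encoded tuples. Transposition simply interchanges the two deficit sets, and the normalization rule is symmetric in them. When both contain the leading deficit $1$, it removes that common deficit and translates both remaining sets simultaneously. Therefore normalization commutes with transposition, decoding the normalized transpose returns $(B,A)$, and re-encoding by \eqref{eq:sigmaAB} gives $\Sigma(B,A)$.

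The $\widehat A$ convention is only how \eqref{eq:sigmaAB} records that common normalization on the column side. That is why, after transposition and renormalization, it reappears on the column side as $\widehat B$ relative to $A$.

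Your fallback gestures toward this, but it does not close the gap. The decoding map \eqref{eq:AB-from-signature} is not row/column symmetric. So the sentence ``the row and column roles are swapped, so the recovered labels are swapped'' is precisely the symmetry of normalization that must be invoked, and uniqueness of normalized representatives does not supply it.
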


\begin{proof}
Use the inverse description \eqref{eq:AB-from-signature}.  Before the final
re-encoding by \eqref{eq:sigmaAB}, transposition simply interchanges the row
and column deficit sets.  The normalization rule is symmetric in those two
sets: whenever both contain the leading deficit $1$, it removes that common
deficit and translates the remaining deficits simultaneously.  Hence applying
\eqref{eq:AB-from-signature} after transposition and normalization returns the
ordered pair $(B,A)$.  Since \eqref{eq:sigmaAB} is inverse to
\eqref{eq:AB-from-signature}, \eqref{eq:signature-transpose} follows.  The
$\widehat A$ convention in \eqref{eq:sigmaAB} is precisely the endpoint
correction that records this normalization when a deficit passes through
$m$.
\end{proof}

\begin{theorem}[Symmetric open permanent bound]\label{thm:per-sym-open}
Let $m_1=m_2=m\ge1$ and assume $a_{-s}=a_s$ for $1\le s\le m$.  If
$P_n=\perm A_n$, then $(P_n)$ has a homogeneous
constant-coefficient annihilator of degree at most
\begin{equation}\label{eq:per-sym-open}
 d_{\mathrm{open}}^{\mathrm{per},+}
 =\frac12\left(\binom{2m}{m}+2^m\right).
\end{equation}
\end{theorem}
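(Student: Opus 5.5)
The plan is to realize the transposition symmetry as an involution on the row-column state space of Proposition~\ref{prop:state-recall}, commuting with the permanent transfer $Q_m^+$ when $a_{-s}=a_s$, and then restrict $Q_m^+$ to the invariant subspace.

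\emph{The involution.} Define $\tau$ on the basis states by $\tau(\Sigma(A,B))=\Sigma(B,A)$. By Lemma~\ref{lem:signature-transpose}, this is exactly the relabelling induced by transposing a boundary configuration and renormalizing. It preserves each exchange level $j$. Its fixed states are the diagonal pairs $A=B$, of which there are $\binom mj$ at level $j$. Hence the orbit count at level $j$ is $\tfrac12\bigl(\binom mj^2+\binom mj\bigr)$. Summing over $j$ with \eqref{eq:statecount-recalled} and $\sum_j\binom mj=2^m$ gives exactly $\tfrac12\bigl(\binom{2m}{m}+2^m\bigr)$.

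\emph{Equivariance.} I will show that, for symmetric symbols, each boundary permanent state satisfies $\perm(X^{\mathsf T})=\perm X$. Each state is the permanent of a boundary-modified section $A_N[\,\cdot\,,\cdot\,]$. Transposing that section gives the section with the row and column deficit sets exchanged, and since $A_N^{\mathsf T}=A_N$, the entries are unchanged. The permanent needs no sign correction, unlike the determinant or the skew case. So the state vector $v_n$ satisfies $\tau v_n=v_n$ for every $n$.

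Next, the one-step Laplace expansion defining $Q_m^+$ is a sum over expansions of a boundary row and the corresponding boundary column. Transposing the whole construction maps the expansion of $\Sigma(A,B)$ term by term onto that of $\Sigma(B,A)$, with the same coefficients, because $a_{-s}=a_s$. This gives $\tau Q_m^+=Q_m^+\tau$.

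\emph{The bound.} Since $v_n=(Q_m^+)^n v_0$ lies in the $\tau$-fixed subspace $W$ and $W$ is $Q_m^+$-invariant, let $R$ be the characteristic polynomial of $Q_m^+|_W$. Then $R(Q_m^+)v_0=0$. The scalar $P_n$ is a fixed coordinate functional of $v_n$, so $R$ annihilates $(P_n)$. Finally, $\dim W$ equals the orbit count, which yields the claimed bound.

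The main obstacle is the equivariance step. Its difficulty lies in the normalization: the $\widehat A$ endpoint correction and the differing ambient section sizes across levels could spoil exact commutation. I would check this by working with unnormalized signatures, where transposition is manifestly a symmetry. I would then invoke Lemma~\ref{lem:signature-transpose} to transport the result, noting that normalization consists of translations that do not change permanents on the Toeplitz locus.
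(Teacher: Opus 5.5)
Your argument matches the paper's in its two substantive steps. The first is the orbit count $\tfrac12\sum_j\bigl(\binom mj^2+\binom mj\bigr)=\tfrac12\bigl(\binom{2m}{m}+2^m\bigr)$ for the level-preserving involution $(A,B)\leftrightarrow(B,A)$ given by Lemma~\ref{lem:signature-transpose}. The second is the fact that, for $a_{-s}=a_s$, every state vector $v_n$ is $\tau$-fixed because $\perm X=\perm X^{\mathsf T}$ for each boundary minor $X$.

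You diverge at the finish. You want $\tau Q_m^+=Q_m^+\tau$, so that all of $W$ is $Q_m^+$-invariant and you can use the characteristic polynomial of $Q_m^+|_W$. The paper never needs this. Since $v_n=(Q_m^+)^nv_0\in W$ for every $n$, the cyclic subspace $K=\operatorname{span}\{v_n:n\ge0\}$ is automatically $Q_m^+$-invariant and lies in $W$. The minimal polynomial of $v_0$ relative to $Q_m^+$ therefore has degree $\dim K\le\dim W$. It kills every $v_n$, and so annihilates the principal coordinate $P_n$. This is exactly the paper's closing remark that the Krylov orbit of the principal state lies in the fixed space.

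So the step you single out as the main obstacle should be deleted rather than proved. As written, it is also the one unsupported step. Your term-by-term transposition of the one-step expansion is a claim about the literal matrix $Q_m^+$. It depends on details of the companion construction that are not recorded here: how each Laplace step treats the boundary row versus the boundary column, and the visibly asymmetric $\widehat A$ encoding in \eqref{eq:sigmaAB}. Without that claim, the restriction $Q_m^+|_W$ is not even defined, since $W$ need not be invariant.

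Your worry about differing ambient sizes does not affect the fixed-vector property. $\tau$ preserves the exchange level, so each comparison is between one boundary minor and its own transpose. For permanents there is also no parity sign $(-1)^{n-j}$ of the kind that blocks the analogous levelwise quotient in Theorem~\ref{thm:skew-transfer-compression}.
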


\begin{proof}
At exchange level $j$, Proposition~\ref{prop:state-recall} identifies the balanced
row-column states with pairs of $j$-subsets of $[m]$.  By
Lemma~\ref{lem:signature-transpose}, transposition acts on the normalized
labels exactly as
\[
 (A,B)\longleftrightarrow(B,A).
\]
Hence the level-$j$ involution has $\binom mj^2$ states in total and
$\binom mj$ fixed states, namely those with $A=B$.

For a symmetric Toeplitz matrix, transposed boundary minors have the same
permanent because $A_n^{\mathsf T}=A_n$ and
$\perm M=\perm M^{\mathsf T}$.  The row-column state
vector therefore lies, for every $n$, in the fixed space of this explicit
transposition involution.  Its dimension is the number of involution orbits, namely
\[
 \frac12\sum_{j=0}^m
 \left(\binom mj^2+\binom mj\right)
 =\frac12\left(\binom{2m}{m}+2^m\right),
\]
by Vandermonde's identity and $\sum_j\binom mj=2^m$.  The Krylov orbit of the
principal state is contained in this fixed space, so a linear dependence
among at most $d_{\mathrm{open}}^{\mathrm{per},+}+1$ consecutive state
vectors propagates under the constant transfer and yields the stated scalar
annihilator for the principal permanent coordinate.
\end{proof}

For $m=2$ the upper bound is already symbolically sharp.  Indeed, with
$a_{-1}=a_1$ and $a_{-2}=a_2$, direct evaluation of the first nine
pentadiagonal permanents gives
\begin{equation}\label{eq:per-hankel-m2}
 H^{\mathrm{per}}_5
 \defeq\det(P_{i+j})_{0\le i,j<5}
 =-2a_1^2a_2^{18}.
\end{equation}
Thus the symmetric pentadiagonal permanent has generic minimal degree $5$ on
the genuine-band locus.

\begin{corollary}[Exact symmetric pentadiagonal exceptional locus]\label{cor:penta-exceptional}
Assume $a_{-2}=a_2$, $a_{-1}=a_1$, and $a_2\ne0$.  Let
$X_n^{(\eps)}$ denote the symmetric pentadiagonal determinant when
$\eps=-1$ and the corresponding permanent when $\eps=+1$.  Then
\[
 \deg_{\min} X^{(\eps)}=
 \begin{cases}
  5,&a_1\ne0,\\
  4,&a_1=0.
 \end{cases}
\]
On the exceptional hyperplane $a_1=0$, put
\[
 F_0=1,\qquad F_1=a_0,\qquad
 F_{k+2}=a_0F_{k+1}+\eps a_2^2F_k.
\]
Then
\begin{equation}\label{eq:penta-parity-split}
 X_{2k}^{(\eps)}=F_k^2,\qquad
 X_{2k+1}^{(\eps)}=F_kF_{k+1},
\end{equation}
and
\begin{equation}\label{eq:penta-drop-rec}
 X_{n+4}^{(\eps)}
 =a_0X_{n+3}^{(\eps)}
  +\eps a_0a_2^2X_{n+1}^{(\eps)}
  +a_2^4X_n^{(\eps)}.
\end{equation}
Equivalently,
\begin{equation}\label{eq:penta-drop-gf}
 \sum_{n\ge0}X_n^{(\eps)}z^n
 =\frac{1}{1-a_0z-\eps a_0a_2^2z^3-a_2^4z^4}.
\end{equation}
Thus, on the genuine symmetric pentadiagonal locus, $a_1=0$ is exactly a
one-step recurrence-degree drop for both determinants and permanents.
\end{corollary}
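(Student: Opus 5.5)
The plan is to treat both signs $\eps=\pm1$ simultaneously and to split into the generic case $a_1\ne0$ and the exceptional hyperplane $a_1=0$.

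\emph{Generic case $a_1\ne0$.}
\begin{itemize}
\item Upper bound for $\eps=-1$: Theorem~\ref{thm:symmetric} gives degree at most $(3^2+1)/2=5$.
\item Upper bound for $\eps=+1$: Theorem~\ref{thm:per-sym-open} gives degree at most $(6+4)/2=5$.
\item Lower bound: the Hankel determinants $H_5=2a_1^2a_2^{18}$ (Remark~\ref{rem:exceptional-locus}) and $H_5^{\mathrm{per}}=-2a_1^2a_2^{18}$ from \eqref{eq:per-hankel-m2} are nonzero when $a_1a_2\ne0$.
\item A nonzero $5\times5$ Hankel determinant excludes any homogeneous constant-coefficient recurrence of degree $\le4$. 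The reason is that such a recurrence would give a nontrivial linear relation among the columns $(X_{i+j})_i$, $j=0,\dots,4$, after shifting to a relation of exact length $5$.
\item Hence the minimal degree is exactly $5$ whenever $a_1\ne0$.
\end{itemize}

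\emph{Exceptional hyperplane $a_1=0$.} I would prove the parity split \eqref{eq:penta-parity-split} directly.
\begin{itemize}
\item With $a_{\pm1}=0$, every entry $(i,j)$ of $A_n$ with $i\not\equiv j\pmod 2$ vanishes.
\item So $A_n$ is, after a simultaneous row/column permutation, the direct sum of its odd-indexed and even-indexed principal submatrices. The permutation is the same on rows and columns, so it preserves both $\det$ and $\perm$ with no sign change.
\item Each block is a symmetric tridiagonal Toeplitz matrix with diagonal $a_0$ and off-diagonals $a_2$.
\item For $n=2k$ both blocks have size $k$; for $n=2k+1$ the sizes are $k+1$ and $k$.
\item The size-$k$ block has permanent/determinant obeying the three-term continuant recurrence $F_{k+2}=a_0F_{k+1}+\eps a_2^2F_k$, with $F_0=1$, $F_1=a_0$. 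This gives \eqref{eq:penta-parity-split}.
\end{itemize}

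\emph{Recurrence and generating function.} Let $\alpha,\beta$ be the roots of $x^2-a_0x-\eps a_2^2$.
\begin{itemize}
\item Both $F_k^2$ and $F_kF_{k+1}$ are combinations of $\alpha^2,\alpha\beta,\beta^2$ in the step $k$.
\item Interleaving the even and odd subsequences, the full sequence $X_n$ lies in the span of $u^n$ with $u^2\in\{\alpha^2,\alpha\beta,\beta^2\}$, i.e.\ $u\in\{\pm\alpha,\pm\beta,\pm\sqrt{\alpha\beta}\}$. That bounds the degree by $6$, so a sharper argument is needed.
\item The sharper route is to verify \eqref{eq:penta-drop-gf} directly. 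I would show that $(1-a_0z-\eps a_0a_2^2z^3-a_2^4z^4)\sum X_nz^n=1$ by checking the coefficient identity for even and odd $n$ separately, using only the $F$-recurrence.
\item Concretely, with $\alpha\beta=-\eps a_2^2$, the denominator factors as $(1-\alpha^2 z^2)$-type pieces combined with the linear factor $1-a_0z$. One computes $1-a_0z-\eps a_0a_2^2z^3-a_2^4z^4=(1-\alpha\beta z^2)(1-a_0z)-\dots$; this manipulation needs care.
\item Fallback: both sides are polynomial in $(a_0,a_2)$, so it suffices to check the first few coefficients and invoke a uniqueness argument. Specifically, compute $X_0,\dots,X_7$ from the $F$'s and confirm they satisfy \eqref{eq:penta-drop-rec}.
\item Then prove \eqref{eq:penta-drop-rec} in general by substituting the closed forms $F_k=(\alpha^{k+1}-\beta^{k+1})/(\alpha-\beta)$. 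Both the even and odd parity checks reduce to polynomial identities in $\alpha,\beta$.
\end{itemize}

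\emph{Minimality on the hyperplane.}
\begin{itemize}
\item Degree $\le4$ follows from \eqref{eq:penta-drop-rec}.
\item For exactness, I would compute the $4\times4$ Hankel determinant of $X_0,\dots,X_6$ symbolically and show it is a nonzero polynomial on $a_2\ne0$, for instance by specializing $a_0=1,a_2=1$.
\item This gives degree exactly $4$ generically on the hyperplane. If the statement is read pointwise, it yields minimal degree $4$ off a proper subvariety.
\end{itemize}

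The main obstacle is the clean verification of the fourth-order recurrence \eqref{eq:penta-drop-rec} from the parity split, together with certifying that the degree-$4$ Hankel determinant does not vanish identically.
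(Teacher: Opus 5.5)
Your treatment of $a_1\ne0$ (upper bound $5$ from Theorems~\ref{thm:symmetric} and \ref{thm:per-sym-open}, lower bound from the two Hankel factorizations) and your parity block decomposition on $a_1=0$ are exactly the paper's argument. The gap is the minimality claim on the hyperplane. The statement asserts $\deg_{\min}X^{(\eps)}=4$ at \emph{every} point with $a_1=0$ and $a_2\ne0$; this is what makes $a_1=0$ exactly the one-step drop locus. A $4\times4$ Hankel determinant shown nonzero only at a specialization such as $a_0=a_2=1$ gives degree $4$ merely off an unidentified proper subvariety of the hyperplane, as you concede, so it does not prove the statement.

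The missing step is the one the paper uses. By \eqref{eq:penta-drop-gf} the generating function is $1/Q(z)$ with $Q(0)=1$, and $\deg Q=4$ exactly because the $z^4$ coefficient is $-a_2^4\ne0$. Since the numerator is the constant $1$, this fraction is in lowest terms. Suppose $\sum_{i=0}^d c_iX_{n+i}=0$ for all $n$ with $c_d\ne0$, and put $R(z)=\sum_i c_iz^{d-i}$. Then $R(z)\sum_n X_nz^n=P(z)$ is a nonzero polynomial, so $R=PQ$ and $d\ge\deg R\ge4$ at every such point. Equivalently, an exact evaluation gives $\det(X_{i+j})_{0\le i,j<4}=-a_2^{12}$ for all $a_0$ and both values of $\eps$. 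That pointwise nonvanishing is precisely what a one-point check cannot reveal.

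Separately, the fourth-order recurrence needs none of the hedging in your plan. The fallback of checking $X_0,\dots,X_7$ is not a proof for all $n$ unless it is combined with an a priori recurrence, such as your degree-$6$ one. Instead, substitute the $F$-recurrence twice. For $n=2k$,
\[
F_{k+2}^2=F_{k+2}(a_0F_{k+1}+\eps a_2^2F_k)=a_0F_{k+1}F_{k+2}+\eps a_0a_2^2F_kF_{k+1}+a_2^4F_k^2,
\]
which is \eqref{eq:penta-drop-rec}; the case $n=2k+1$ is identical, starting from $F_{k+2}F_{k+3}$. The manipulation you were looking for is the factorization
\[
1-a_0z-\eps a_0a_2^2z^3-a_2^4z^4=(1-a_0z-\eps a_2^2z^2)(1+\eps a_2^2z^2)=(1-\alpha z)(1-\beta z)(1-\alpha\beta z^2).
\]
It shows that the modes $-\alpha,-\beta$ in your degree-$6$ interleaving bound carry zero coefficient. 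The values $X_0,\dots,X_3$ then make the numerator equal to $1$.
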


\begin{proof}
When $a_1=0$, simultaneously reorder the rows and columns by parity.  The
matrix becomes block diagonal with two symmetric tridiagonal Toeplitz blocks;
for size $2k$ the block sizes are $k,k$, and for size $2k+1$ they are
$k+1,k$.  The same row and column permutation leaves both determinant and
permanent unchanged.  The determinant of a tridiagonal block satisfies the
second-order recurrence above with $\eps=-1$, while its permanent satisfies
it with $\eps=+1$.  This proves \eqref{eq:penta-parity-split}.

A direct substitution of the second-order recurrence for $F_k$ into the two
parities in \eqref{eq:penta-parity-split} gives
\eqref{eq:penta-drop-rec}.  The initial values
\[
 X_0^{(\eps)}=1,\quad X_1^{(\eps)}=a_0,\quad
 X_2^{(\eps)}=a_0^2,\quad
 X_3^{(\eps)}=a_0^3+\eps a_0a_2^2
\]
then give \eqref{eq:penta-drop-gf}; all numerator correction terms cancel.
Because $a_2\ne0$, the denominator in \eqref{eq:penta-drop-gf} has degree
four, and its numerator is $1$, so no cancellation is possible.  Hence the
minimal degree is exactly $4$ on $a_1=0$.

If $a_1\ne0$, the determinant Hankel factorization in
Remark~\ref{rem:exceptional-locus} and the permanent factorization
\eqref{eq:per-hankel-m2} are both nonzero.  The corresponding degree-$5$
upper bounds therefore become exact.  This proves the stated dichotomy.
\end{proof}

\begin{proposition}[Skew-symmetric open permanents]\label{prop:per-skew-open}
Assume the coefficient field has characteristic different from $2$, let
$m\ge1$, and impose $a_0=0$ and $a_{-s}=-a_s$.  Then
\[
 P_{2k+1}=0.
\]
If $E_k^{\mathrm{per}}\defeq P_{2k}$, then the even-size permanent subsequence
has a homogeneous constant-coefficient annihilator of degree at most
\begin{equation}\label{eq:per-skew-open-even}
 \frac12\binom{2m}{m}.
\end{equation}
\end{proposition}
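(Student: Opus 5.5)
The odd-size vanishing is immediate and comes first.  Since $A_n^{\mathsf T}=-A_n$ and the permanent is transpose-invariant and homogeneous of degree $n$, we get
\[
 P_n=\perm A_n^{\mathsf T}=\perm(-A_n)=(-1)^nP_n .
\]
Hence $2P_{2k+1}=0$, and characteristic $\ne2$ gives $P_{2k+1}=0$.  This is exactly the remark already recorded in the proof of Corollary~\ref{cor:skew-even}.

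For the even subsequence I plan to use the same parity trick as Corollary~\ref{cor:skew-even}, applied to the permanent transfer instead of the Widom product.

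\emph{Step 1: an even annihilator.}  By Proposition~\ref{prop:state-recall}, the principal permanent coordinate of the $\binom{2m}{m}$-dimensional row-column state vector $v_n=Q^nv_0$, with $Q$ the specialized transfer, satisfies a recurrence of degree at most $d=\binom{2m}{m}$.  To get an even annihilator I would use the Krylov sequence of $Q^2$ directly.  The vectors $w_k=Q^{2k}v_0$ lie in a space of dimension $d$, so the scalar sequence $P_{2k}=\langle e,w_k\rangle$ only inherits degree at most $d$.  To halve this, I would instead build an involution on the state space.

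\emph{Step 2: a grading that $Q$ reverses.}  Following Lemma~\ref{lem:signature-transpose}, transposition sends the state labelled $(A,B)$ to $(B,A)$.  For a skew matrix, the transposed boundary minor has permanent $(-1)^{\text{size}}$ times the original.  So the state vector at time $n$ lies in the $\pm$-eigenspace of a signed involution $\tau_n$, with sign $(-1)^{n-j}$ on level $j$ (the fixed-ambient identity \eqref{eq:skew-boundary-transpose}, now for permanents).  The paper warns that this sign depends on the ambient size, which differs across levels after normalization.  I would handle this by absorbing the discrepancy into a fixed diagonal sign change on levels.  Define $\tau$ on normalized states so that $\tau v_n=(-1)^n v_n$ for all $n$, using that the normalized level-$j$ state comes from an ambient section whose size is $n$ plus a level-dependent constant.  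Then $\tau Q=-Q\tau$ on the reachable span, so $Q$ swaps the $\pm1$ eigenspaces of $\tau$ and $Q^2$ preserves each.

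\emph{Step 3: count and conclude.}  The even-time vectors $v_{2k}$ all lie in the $+1$ eigenspace $V_+$ of $\tau$.  Its dimension is $\sum_j\frac12(\binom mj^2\pm\binom mj)$, with the sign alternating in $j$.  Using $\sum_j(-1)^j\binom mj=0$ as in \eqref{eq:skew-half-state-count}, this equals $\frac12\binom{2m}{m}$.  The Krylov orbit of $v_0$ under $Q^2$ stays inside $V_+$, so $\frac12\binom{2m}{m}+1$ consecutive vectors $w_k$ are linearly dependent.  That dependence propagates under $Q^2$ and yields the stated annihilator for $E_k^{\mathrm{per}}$.

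The main obstacle is Step 2: checking that the level-dependent sign makes $\tau$ anticommute with the one-step transfer, given the normalization subtleties the paper flags.  If that fails, my fallback is to run the argument on the two-step transfer $Q^2$ alone.  Even-time state vectors do share a consistent ambient parity at each level, because the normalization shift per level is fixed.  So the sign relations define a $Q^2$-invariant subspace of dimension $\frac12\binom{2m}{m}$ containing all $v_{2k}$.  This avoids any claim about one-step anticommutation and is all the statement requires.
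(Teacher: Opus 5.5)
Your odd-size vanishing argument is correct and coincides with the paper's. The even-subsequence part has a genuine gap in the dimension count of Step~3, and your fallback inherits it.

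Grant your assumption that the level-$j$ coordinates of $v_n$ are boundary permanents of skew sections of order $n+e_j$, for fixed offsets $e_j$. Transposition then gives $v_n^{(B,A)}=(-1)^{n}\sigma_j\,v_n^{(A,B)}$ with $\sigma_j=(-1)^{e_j}$. The space containing all $v_{2k}$ therefore has dimension
\[
 \frac12\sum_{j=0}^m\left(\binom mj^2+\sigma_j\binom mj\right),
\]
which equals $\frac12\binom{2m}{m}$ only when $\sum_j\sigma_j\binom mj=0$. You get this by declaring that $\sigma_j$ alternates in $j$, i.e.\ by importing the sign $(-1)^{n-j}$ of \eqref{eq:skew-boundary-transpose}. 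But that alternation holds only inside a \emph{common} ambient section, which is exactly what the normalized state vector lacks. Nothing in your argument controls the parities of the level offsets. The paper states explicitly, in Theorem~\ref{thm:skew-transfer-compression} and Section~\ref{subsec:symmetry-aware-algorithm}, that these transpose-sign identities do not give a levelwise half-dimensional subspace of the normalized row--column transfer. That discussion concerns the same state space with the same transpose signs. If, for instance, the normalization keeps the order of the minors independent of $j$, then $\sigma_j$ is constant and the relevant eigenspace has dimension $\frac12\bigl(\binom{2m}{m}\pm2^m\bigr)$, so Step~3 does not give the bound. The $Q^2$ fallback uses the same sign relations and the same count, so it does not avoid the problem.

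The missing idea is that no state-space quotient is needed: the halving happens for the reduced \emph{scalar} generating function, not for the transfer. The paper writes $F(z)=\sum_nP_nz^n=N(z)/D(z)$ with $\gcd(N,D)=1$, $D(0)=1$, and $\deg D\le\binom{2m}{m}$, using Proposition~\ref{prop:state-recall}. Odd-size vanishing gives $F(-z)=F(z)$. Uniqueness of the reduced normalized representation then forces $D(-z)=D(z)$, so $D(z)=R(z^2)$ with $\deg R\le\frac12\binom{2m}{m}$. Writing $F(z)=G(z^2)$ shows that $R$ is a denominator for $\sum_kE_k^{\mathrm{per}}y^k$. The numerator causes no extra degree: $N$ is even with $\deg N<\binom{2m}{m}$, so the numerator of $G$ has degree below $\frac12\binom{2m}{m}$. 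This is what your Step~1 was missing: squaring $Q$ halves nothing, but the reduced denominator of a sequence supported on even indices is automatically even.
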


\begin{proof}
Since $A_n^{\mathsf T}=-A_n$,
\[
 P_n=\perm A_n^{\mathsf T}
 =\perm(-A_n)=(-1)^nP_n,
\]
which proves the odd-size vanishing.  By Proposition~\ref{prop:state-recall}, the full
permanent sequence has a rational generating function $F(z)=N(z)/D(z)$ with
$D(0)=1$ and $\deg D\le\binom{2m}{m}$.  Take $N,D$ coprime.  Since only even
terms survive, $F(-z)=F(z)$.  Coprimeness and the normalization $D(0)=1$
then force $D(-z)=D(z)$, so $D(z)=R(z^2)$ with
$\deg R\le\frac12\binom{2m}{m}$.  Writing
$F(z)=G(z^2)$ shows that $R$ is an annihilating denominator for the generating
function of $(E_k^{\mathrm{per}})$.
\end{proof}

The two permanent bounds above concern state-space quotients and parity,
rather than the determinant root-product collisions of the preceding
subsections.  Section~\ref{sec:cyclic-symmetry} derives the cyclic counterparts
by viewing a banded circulant as a one-sided Toeplitz band with a fixed corner
defect.  Resolving that defect produces a finite family of clean but shifted
central bands, so the cyclic bounds reduce directly to the ordinary
row--column state counts of the companion recurrence paper.

\section{Cyclic symmetry quotients}\label{sec:cyclic-symmetry}

For cyclic closure we use the notation of the companion paper.  Let
\[
 C_n=a(\Pi_n),\qquad n>2m,
\]
where $\Pi_n$ is the cyclic shift.  If $q(z)=a_m\prod_{j=1}^{2m}(z-z_j)$, then \cite{AlekseyevKhomovskyRecurrences2026}
\begin{equation}\label{eq:cyclic-det-product-recalled}
 \det C_n=(-1)^{m(n+1)}a_m^n\prod_{j=1}^{2m}(1-z_j^n).
\end{equation}
The unrestricted cyclic determinant uses all $2^{2m}$ subset modes.  Balanced symmetry collapses this full Boolean spectrum in a particularly simple way.  Before specializing that symmetry, we record an exact bridge in the opposite direction: an open banded Toeplitz determinant can be recovered from a slightly larger circulant and a correction determinant whose size is only the semibandwidth.  On the permanent side, recurrence questions for circulants go back at least to Minc's work on $(0,1)$-circulants \cite{Minc1964}; later sparse-circulant determinant representations provide a complementary conversion viewpoint \cite{CodenottiResta2002}.

\subsection{A Toeplitz--circulant bridge}\label{subsec:toeplitz-circulant-bridge}

Fixed-size reductions of banded Toeplitz determinants are classical.  In
particular, Baxter and Schmidt reduce such determinants to determinants of
fixed order formed from coefficients of the reciprocal symbol
\cite{BaxterSchmidt1961}; see also the determinant treatment in
\cite[Chapter~2]{BottcherGrudsky2005}.  Circulant embeddings are likewise a
standard companion to Toeplitz matrices, while Jacobi's complementary-minor
identity is a well-established tool for exact Toeplitz determinant identities
\cite{BottcherWidom2006}.  The observation below combines these viewpoints in
a form tailored to the present open--cyclic comparison: the balanced open
section is a principal block of a minimally larger circulant, and its
determinant is obtained from the circulant determinant by a fixed-size inverse
block correction.

Let
\[
 T_n(a)=(a_{j-i})_{i,j=1}^n,
 \qquad
 a(z)=\sum_{s=-m}^m a_s z^s,
\]
and put $D_n=\det T_n(a)$.  For $n>m$ set
\[
 N=n+m,
 \qquad
 \widehat C_N=a(\Pi_N).
\]
Since $N>2m$, no wrap-around entry of $\widehat C_N$ lies inside its
leading $n\times n$ principal block, and that block is exactly $T_n(a)$.
Thus the open Toeplitz section is obtained from a minimal circulant completion
by deleting only $m$ rows and the same $m$ columns.

\begin{proposition}[Fixed-size circulant correction]\label{prop:toeplitz-circulant-correction}
Let $n>m$, $N=n+m$, and assume first that $\widehat C_N$ is nonsingular.
If
\[
 S=\{n+1,\ldots,N\},
\]
then
\begin{equation}\label{eq:toeplitz-circulant-jacobi}
 D_n
 =\det(\widehat C_N)\,
  \det\!\bigl(\widehat C_N^{-1}[S,S]\bigr).
\end{equation}
Moreover $\widehat C_N^{-1}$ is circulant, and if
\begin{equation}\label{eq:circulant-inverse-coefficients}
 g_r^{(N)}
 =\frac1N\sum_{\omega^N=1}\frac{\omega^{-r}}{a(\omega)},
 \qquad |r|\le m-1,
\end{equation}
then
\begin{equation}\label{eq:toeplitz-circulant-fourier}
 D_n
 =\left(\prod_{\omega^N=1}a(\omega)\right)
   \det\bigl(g_{j-i}^{(N)}\bigr)_{i,j=1}^m.
\end{equation}
Thus the correction determinant has order $m$, independent of $n$.
\end{proposition}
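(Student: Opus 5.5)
The plan is to combine Jacobi's complementary-minor identity with the Fourier diagonalization of circulants; the main work is checking the embedding and keeping the index conventions straight.

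\emph{Step 1: the embedding.} I will first verify that the leading $n\times n$ principal block of $\widehat C_N$ is exactly $T_n(a)$. The entry of $a(\Pi_N)$ in position $(i,j)$ is $a_s$ summed over $s\in[-m,m]$ with $s\equiv j-i\pmod N$. For $i,j\le n$ we have $|j-i|\le n-1$. A wrap-around representative would need $|j-i|\ge N-m=n$, so none occurs. Since $N>2m$, distinct offsets in $[-m,m]$ are also distinct modulo $N$, so no two diagonals merge. Hence $\widehat C_N[S^c,S^c]=T_n(a)$ with $S^c=[n]$.

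\emph{Step 2: Jacobi's identity.} I will apply Jacobi's complementary-minor identity to the nonsingular matrix $\widehat C_N$ with the same row and column set $S$. In the notation of \eqref{eq:symmetric-jacobi-boundary}, the sign is $(-1)^{2\sum S}=1$. This gives \eqref{eq:toeplitz-circulant-jacobi} at once.

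\emph{Step 3: the inverse and the Fourier formula.} Circulants of order $N$ form the commutative algebra $\mathbb C[\Pi_N]$. They are simultaneously diagonalized by the Fourier basis, with $\Pi_N$ acting by the $N$th roots of unity $\omega$. Consequently:
\begin{itemize}[leftmargin=*]
\item $\det\widehat C_N=\prod_{\omega^N=1}a(\omega)$;
\item nonsingularity means $a(\omega)\ne0$ for all such $\omega$;
\item the inverse is the circulant $\sum_r g^{(N)}_r\Pi_N^r$ with coefficients given by discrete Fourier inversion of $1/a(\omega)$, which is \eqref{eq:circulant-inverse-coefficients}.
\end{itemize}
Its $(i,j)$ entry depends only on $j-i$ modulo $N$. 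For $i,j\in S=\{n+1,\ldots,n+m\}$ the difference satisfies $|j-i|\le m-1<N$, so it is its own representative. Therefore $\widehat C_N^{-1}[S,S]=(g^{(N)}_{j-i})_{i,j=1}^m$ after reindexing, and substituting into \eqref{eq:toeplitz-circulant-jacobi} yields \eqref{eq:toeplitz-circulant-fourier}. The order $m$ of the correction determinant is visibly independent of $n$.

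\emph{Main obstacle.} I expect the only delicate point to be the orientation of the Fourier convention. Whether $\Pi_N$ acts on the eigenvector $(\omega^k)_k$ by $\omega$ or by $\omega^{-1}$ decides between $\omega^{-r}$ and $\omega^{r}$ in \eqref{eq:circulant-inverse-coefficients}. To settle it, I will write $\Pi_N$ with the convention under which $a(\Pi_N)_{ij}=a_{j-i}$, as forced by Step 1, and then check directly that $\sum_r g_r a_{s-r}$ (indices mod $N$) equals $\delta_{s,0}$ using $\frac1N\sum_\omega\omega^{k}=[N\mid k]$.

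If the opposite orientation were forced, I would not need to rework the argument. The determinant is unchanged under $g_r\mapsto g_{-r}$, since that amounts to transposing the $m\times m$ Toeplitz matrix. So \eqref{eq:toeplitz-circulant-fourier} holds either way.
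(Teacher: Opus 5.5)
Your proposal is correct and follows essentially the same route as the paper's proof. Both identify the leading $n\times n$ block of $\widehat C_N$ with $T_n(a)$, apply Jacobi's complementary-minor identity with row and column set $S$, and use Fourier diagonalization to obtain $\det\widehat C_N=\prod_{\omega^N=1}a(\omega)$ and the circulant inverse, whose $S\times S$ block is the Toeplitz matrix $(g^{(N)}_{j-i})$. Your explicit checks of the no-wrap-around embedding and of the Fourier orientation are more detailed than the paper's. The paper adds only the remark that the identities extend across singular specializations after clearing the inverse denominators.
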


\begin{proof}
The leading $n\times n$ principal block of $\widehat C_N$ is $T_n(a)$.
Jacobi's complementary-minor identity therefore gives
\eqref{eq:toeplitz-circulant-jacobi}.  The Fourier vectors diagonalize
$\widehat C_N$ with eigenvalues $a(\omega)$, $\omega^N=1$.  Hence its
inverse is circulant with offset-$r$ coefficient
\eqref{eq:circulant-inverse-coefficients}, while
\[
 \det\widehat C_N=\prod_{\omega^N=1}a(\omega).
\]
Restricting the inverse to the consecutive index set $S$ gives the
$m\times m$ Toeplitz block
$(g_{j-i}^{(N)})$, proving \eqref{eq:toeplitz-circulant-fourier}.
The identities extend across singular specializations after clearing the
inverse denominators.
\end{proof}

The correction coefficients also make the relation with the classical
reciprocal-symbol approach explicit.  If an annulus containing the unit circle
supports a Laurent expansion
\[
 \frac1{a(z)}=\sum_{k\in\mathbb Z}h_k z^k,
\]
then discrete Fourier orthogonality in
\eqref{eq:circulant-inverse-coefficients} gives
\begin{equation}\label{eq:circulant-aliasing}
 g_r^{(N)}=\sum_{\ell\in\mathbb Z}h_{r+\ell N}.
\end{equation}
Thus the inverse-circulant correction uses the $N$-periodization, or aliasing,
of the reciprocal-symbol coefficients that appear in fixed-size Toeplitz
reductions such as Baxter--Schmidt.

The fixed-size correction also gives a circulant-completion route from the
elementary circulant product to the Widom expansion recalled in
\eqref{eq:widomform}.  This route parallels the classical Baxter--Schmidt
derivation, but uses periodic Fourier coefficients of the inverse symbol in
place of the unperiodized reciprocal coefficients.  We first record the
elementary middle-subset interpolation identity that will convert the
Cauchy--Binet expansion of the correction into the Widom sum.

\begin{lemma}[Middle-subset interpolation identity]\label{lem:middle-subset-interpolation}
Let $z_1,\ldots,z_{2m}$ be pairwise distinct and define
\[
 \Delta_J^{\times}
 =\prod_{\substack{j\in J\\k\notin J}}(z_j-z_k),
 \qquad |J|=m.
\]
Then for arbitrary $y_1,\ldots,y_{2m}$,
\begin{equation}\label{eq:middle-subset-interpolation}
 \sum_{|J|=m}
 \frac{\displaystyle\prod_{k\notin J}(1-y_k)}{\Delta_J^{\times}}
 =
 \sum_{|J|=m}
 \frac{\displaystyle\prod_{j\in J}y_j}{\Delta_J^{\times}}.
\end{equation}
\end{lemma}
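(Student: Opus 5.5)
The plan is to compare the two sides of \eqref{eq:middle-subset-interpolation} coefficient by coefficient in the $y$'s. Both sides are multilinear polynomials in $y_1,\ldots,y_{2m}$ with coefficients rational in the $z$'s, so it is enough to match the coefficient of each monomial $y^K=\prod_{k\in K}y_k$, $K\subseteq[2m]$. On the right, $y^K$ occurs only when $|K|=m$, and then with coefficient $1/\Delta_K^{\times}$. On the left, expand
\[
 \prod_{k\notin J}(1-y_k)=\sum_{K\subseteq J^c}(-1)^{|K|}y^K .
\]
The coefficient of $y^K$ is then $(-1)^{|K|}\Sigma(K)$, where
\[
 \Sigma(K):=\sum_{\substack{|J|=m\\ J\cap K=\varnothing}}\frac1{\Delta_J^{\times}} .
\]
So the lemma reduces to two statements: $(-1)^m\Sigma(K)=1/\Delta_K^{\times}$ when $|K|=m$, and $\Sigma(K)=0$ when $|K|\ne m$.

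The case $|K|>m$ is immediate, since no $m$-subset avoids $K$. For $|K|=m$ the only admissible subset is $J=K^c$. Reversing all $m^2$ factors gives $\Delta_{K^c}^{\times}=(-1)^{m^2}\Delta_K^{\times}=(-1)^m\Delta_K^{\times}$. Hence $(-1)^m\Sigma(K)=(-1)^m/\Delta_{K^c}^{\times}=1/\Delta_K^{\times}$, as required.

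The main obstacle is the vanishing $\Sigma(K)=0$ for $|K|<m$. Put $L=K^c$, so $|L|>m$, and $Q(w)=\prod_{i=1}^{2m}(w-z_i)$. I would first try the standard iterated-residue representation. Consider the $m$-variable form
\[
 \Phi(w)=\frac{\prod_{a\ne b}(w_a-w_b)}{\prod_{a=1}^m Q(w_a)} .
\]
Its residue at $w=(z_{j_1},\ldots,z_{j_m})$ with distinct $j$'s is $\pm1/\Delta_J^{\times}$ for $J=\{j_1,\ldots,j_m\}$. Summing over orderings yields $m!\,(-1)^{\binom m2}/\Delta_J^{\times}$, with the sign fixed once by checking the case $m=1$ and then by induction. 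Thus $\Sigma(K)$ is, up to a constant, the sum of these residues with every $w_a$ restricted to poles in $L$.

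To show that this restricted sum vanishes, I would integrate one variable at a time. Fix $w_2,\ldots,w_m$ at points of $L$ and look at $w_1$. In $w_1$ the form has degree $2(m-1)-2m=-2$, so the sum of its residues over all of $[2m]$ is $0$. The difficulty is that this includes residues at $K$, which $\Sigma(K)$ excludes. If the one-variable argument does not close directly, my fallback is induction on $m$ via a partial-fraction expansion in one variable $z_{k_0}$ with $k_0\in K$. First check that each term of $\Sigma(K)$ is $O(z_{k_0}^{-m})$ as $z_{k_0}\to\infty$. Then study the poles at $z_{k_0}=z_l$ for $l\in L$. At such a pole, take the residue and apply the induction hypothesis to the configuration with $z_{k_0}$ and $z_l$ merged, with $m$ replaced by $m-1$. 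If these residues vanish, $\Sigma(K)$ is a polynomial in $z_{k_0}$ that tends to $0$, and hence is identically zero.

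I expect this residue bookkeeping, namely verifying that the poles in $z_{k_0}$ really cancel, to be the delicate step. I would first sanity-check the whole reduction numerically for $m=2$, $|K|=1$ before committing to either route.
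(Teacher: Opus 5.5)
Your reduction is sound. Comparing coefficients of $y^K$ shows that the lemma is equivalent to two things: the $|K|=m$ sign check, which you do correctly, and $\Sigma(K)=0$ for $|K|<m$. (Incidentally, the iterated residue of $\Phi$ at an ordered tuple of distinct poles is exactly $1/\Delta_J^{\times}$, because $\prod_a Q'(z_{j_a})=\prod_{a\ne b}(z_{j_a}-z_{j_b})\,\Delta_J^{\times}$. So no factor $(-1)^{\binom m2}$ appears, though this is harmless.)

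\textbf{The gap.} The vanishing of $\Sigma(K)$ for $|K|<m$, which is the substantive part of the lemma, is never established. Your one-variable residue argument stops exactly where you say. What it actually proves is
\[
 \sum_{\ell\notin H}\frac{1}{\Delta^{\times}_{H\cup\{\ell\}}}=0\qquad\text{for every }(m-1)\text{-subset }H\subseteq[2m],
\]
and this sum runs over $\ell\in K$ as well as $\ell\in L$.

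The fallback induction is not a proof as written either:
\begin{enumerate}
\item It requires $k_0\in K$, so it never treats $K=\emptyset$. That case is needed as the base at every level.
\item The residue at $z_{k_0}=z_l$ reduces, up to a factor independent of the summation index, to the analogous sum over the $2m-2$ points with $z_{k_0}$ and $z_l$ both \emph{deleted} (not merged), with $K$ replaced by $K\setminus\{k_0\}$.
\item The residue bookkeeping itself is left undone.
\end{enumerate}

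\textbf{The missing step.} A short telescoping over $|J\cap K|$, using only the displayed identity, closes your first route. Put
\[
 T_r=\sum_{|J|=m,\ |J\cap K|=r}\frac{1}{\Delta_J^{\times}},
\]
so that $T_0=\Sigma(K)$. Sum the displayed identity over all $H$ with $|H\cap K|=r$, and split the terms with $\ell\in L$ from those with $\ell\in K$. Each $J$ with $|J\cap K|=r$ is then counted $m-r$ times, and each $J$ with $|J\cap K|=r+1$ is counted $r+1$ times. Hence
\[
 (m-r)\,T_r+(r+1)\,T_{r+1}=0,\qquad 0\le r\le m-1.
\]
Since $|K|\le m-1$, the sum $T_{|K|+1}$ is empty, and backward induction gives $T_0=0$. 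With this step your coefficient comparison becomes a complete proof.

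\textbf{Comparison with the paper.} The paper avoids the restricted sums altogether. It sets $F(t)=\sum_{|J|=m}\prod_{j\in J}(t-y_j)/\Delta_J^{\times}$ and shows $F'(t)=0$ by grouping terms by $H$. This uses the same vanishing, proved there by Lagrange interpolation rather than residues. It then uses $\Delta_{J^c}^{\times}=(-1)^m\Delta_J^{\times}$ to identify the two sides with $(-1)^mF(1)$ and $(-1)^mF(0)$. The generating-polynomial trick thus handles all $K$ at once.
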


\begin{proof}
Put
\[
 F(t)=\sum_{|J|=m}
 \frac{\prod_{j\in J}(t-y_j)}{\Delta_J^{\times}}.
\]
We show that $F$ is constant.  Differentiating and grouping terms by an
$(m-1)$-subset $H$ gives
\[
 F'(t)=\sum_{|H|=m-1}\prod_{h\in H}(t-y_h)
 \sum_{\ell\notin H}\frac1{\Delta_{H\cup\{\ell\}}^{\times}}.
\]
Let $L=[2m]\setminus H$, so $|L|=m+1$, and put
$p_L(z)=\prod_{\ell\in L}(z-z_\ell)$ and
$P_H(z)=\prod_{h\in H}(z-z_h)$.  Up to a nonzero factor independent of
$\ell$, the inner sum is
\[
 \sum_{\ell\in L}\frac{P_H(z_\ell)}{p_L'(z_\ell)}.
\]
This is zero by Lagrange interpolation because
$\deg P_H=m-1<|L|-1=m$.  Hence $F'(t)=0$.

Now $\Delta_{J^c}^{\times}=(-1)^m\Delta_J^{\times}$.  Replacing $J$ by
$J^c$ on the left of \eqref{eq:middle-subset-interpolation} shows that its
left-hand side is $(-1)^mF(1)$, whereas its right-hand side is
$(-1)^mF(0)$.  Since $F(1)=F(0)$, the identity follows.
\end{proof}

\begin{corollary}[Circulant completion recovers Widom]\label{cor:circulant-recovers-widom}
Assume that
\[
 q(z)=z^ma(z)=a_m\prod_{\nu=1}^{2m}(z-z_\nu)
\]
has simple roots and that $z_\nu^N\ne1$ for $N=n+m$.  Then, for
$|r|\le m-1$,
\begin{equation}\label{eq:circulant-correction-root-form}
 g_r^{(N)}
 =\sum_{\nu=1}^{2m}
 \frac{z_\nu^{\,m-r-1}}
 {q'(z_\nu)(1-z_\nu^N)}.
\end{equation}
Consequently
\begin{equation}\label{eq:circulant-correction-cauchy-binet}
 \det\bigl(g_{j-i}^{(N)}\bigr)_{i,j=1}^m
 =a_m^{-m}
 \sum_{|J|=m}
 \frac{1}{\Delta_J^{\times}\prod_{j\in J}(1-z_j^N)},
\end{equation}
and
\begin{equation}\label{eq:open-from-cyclic-factors}
 D_n
 =(-1)^{m(N+1)}a_m^n
 \sum_{|J|=m}
 \frac{\prod_{k\notin J}(1-z_k^N)}{\Delta_J^{\times}}.
\end{equation}
These formulas reduce to the Widom expansion
\begin{equation}\label{eq:widom-from-circulant}
 D_n
 =\sum_{|J|=m}
 \frac{\prod_{j\in J}z_j^m}{\Delta_J^{\times}}
 \left((-1)^m a_m\prod_{j\in J}z_j\right)^n.
\end{equation}
\end{corollary}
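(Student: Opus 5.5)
The proof has four steps, in order: a partial-fraction computation of $g_r^{(N)}$, a Cauchy--Binet expansion of the $m\times m$ correction determinant, substitution into Proposition~\ref{prop:toeplitz-circulant-correction}, and finally Lemma~\ref{lem:middle-subset-interpolation} to pass to the Widom form.

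\emph{Step 1: the root formula \eqref{eq:circulant-correction-root-form}.} Write $1/a(\omega)=\omega^m/q(\omega)$. Put $f(z)=z^{m-r-1}/q(z)$. Then the discrete Fourier sum \eqref{eq:circulant-inverse-coefficients} is $\frac1N\sum_{\omega^N=1}\omega\, f(\omega)$. I would evaluate it by residues of $f(z)\cdot\frac{N z^{N-1}}{z^N-1}\cdot\frac{1}{N}$, or equivalently by the partial-fraction identity
\[
\frac1N\sum_{\omega^N=1}\frac{\omega}{\omega-z_\nu}=\frac{1}{1-z_\nu^N}.
\]
The condition $|r|\le m-1$ gives $0\le m-r-1\le 2m-2<\deg q$. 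Hence $f$ is a proper rational function with simple poles at the $z_\nu$, and
\[
f(z)=\sum_\nu \frac{z_\nu^{m-r-1}}{q'(z_\nu)(z-z_\nu)}.
\]
Summing the partial-fraction identity then yields \eqref{eq:circulant-correction-root-form}. The main care needed here is the sign and orientation of the partial-fraction identity.

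\emph{Step 2: the Cauchy--Binet expansion \eqref{eq:circulant-correction-cauchy-binet}.} Factor $(g_{j-i})=U\,\mathrm{diag}(w_\nu)\,W$, where $U_{i\nu}=z_\nu^{\,m-1+i}$, $W_{\nu j}=z_\nu^{-j}$ (up to an index shift), and $w_\nu=1/(q'(z_\nu)(1-z_\nu^N))$. Cauchy--Binet gives a sum over $|J|=m$ of $\det U[\cdot,J]\det W[J,\cdot]\prod_{J}w_\nu$. Both minors are Vandermonde determinants in the $z_j$, $j\in J$, times monomials. Their product is $\pm\prod_J z_j^{\,c}\,V_J^2$. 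I would then use
\[
\prod_{j\in J}q'(z_j)=a_m^m\,(\pm V_J^2)\,\Delta_J^{\times}
\]
to cancel the squared Vandermonde. The monomial powers should cancel to $z^0$ for the chosen index shift; I will verify this against the target formula \eqref{eq:circulant-correction-cauchy-binet}. I expect this sign and monomial bookkeeping to be the main obstacle.

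\emph{Step 3: the cyclic form \eqref{eq:open-from-cyclic-factors}.} Multiply \eqref{eq:circulant-correction-cauchy-binet} by
\[
\prod_{\omega^N=1}a(\omega)=\det\widehat C_N=(-1)^{m(N+1)}a_m^N\prod_\nu(1-z_\nu^N),
\]
which is \eqref{eq:cyclic-det-product-recalled}. The factors $1-z_j^N$ with $j\in J$ cancel, leaving the complementary factors. The power of $a_m$ is $N-m=n$.

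\emph{Step 4: passage to Widom.} Apply Lemma~\ref{lem:middle-subset-interpolation} with $y_k=z_k^N$. This replaces $\prod_{k\notin J}(1-z_k^N)$ by $\prod_{j\in J}z_j^N=\prod_J z_j^{\,m}\prod_J z_j^{\,n}$. Absorbing the sign, $(-1)^{m(N+1)}=(-1)^{mn}(-1)^{m(m+1)}=(-1)^{mn}$, since $m(m+1)$ is even. This produces $\bigl((-1)^ma_m\prod_J z_j\bigr)^n$ and hence \eqref{eq:widom-from-circulant}. The genericity hypotheses $z_\nu^N\ne1$ and simple roots are needed only so that every step is defined, and \eqref{eq:widom-from-circulant} then agrees with \eqref{eq:widomform}.
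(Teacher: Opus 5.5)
Your proposal is correct and follows the paper's proof step for step: partial fractions plus the root-of-unity average, then Cauchy--Binet with the Vandermonde minors cancelling against $\prod_{j\in J}q'(z_j)$, then the cyclic product and Lemma~\ref{lem:middle-subset-interpolation} with $y_k=z_k^N$. The Cauchy--Binet sign bookkeeping you flagged does close, since the minor product and $\prod_{j\in J}q'(z_j)/(a_m^m\Delta_J^{\times})$ both equal $(-1)^{\binom m2}V_J^2$, and your factors already give $z_\nu^{m-1+i-j}$ with no index shift.

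There are two cosmetic slips. Your residue aside should use $zf(z)$ rather than $f(z)$. Also, splitting $z_\nu^{m-1+i-j}=z_\nu^{i-1}z_\nu^{m-j}$, as the paper does, avoids the negative powers in your $W_{\nu j}=z_\nu^{-j}$; those would be undefined if $a_{-m}=0$ makes $0$ a simple root.
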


\begin{proof}[Proof of Corollary~\ref{cor:circulant-recovers-widom}]
Since $a(\omega)=\omega^{-m}q(\omega)$,
\eqref{eq:circulant-inverse-coefficients} and the partial fraction expansion
of $1/q$ reduce the required root-of-unity sum to
\[
 \frac1N\sum_{\omega^N=1}\frac{\omega^d}{\omega-z}
 =\frac{z^{d-1}}{1-z^N},
 \qquad 1\le d<N.
\]
Here $d=m-r$ lies between $1$ and $2m-1<N$, proving
\eqref{eq:circulant-correction-root-form}.

Write the correction matrix as $UDV^{\mathsf T}$, where
\[
 U_{i\nu}=z_\nu^{i-1},\qquad
 V_{j\nu}=z_\nu^{m-j},\qquad
 D_{\nu\nu}=\frac1{q'(z_\nu)(1-z_\nu^N)}.
\]
Cauchy--Binet gives a sum over $m$-subsets $J$.  The two Vandermonde
minors cancel the factors internal to $J$ in
$\prod_{j\in J}q'(z_j)$, leaving exactly
\eqref{eq:circulant-correction-cauchy-binet}.  Multiplication by the cyclic
product \eqref{eq:cyclic-det-product-recalled}, with $n$ there replaced by
$N$, gives \eqref{eq:open-from-cyclic-factors}.

Finally Lemma~\ref{lem:middle-subset-interpolation} replaces the numerator
in \eqref{eq:open-from-cyclic-factors} by $\prod_{j\in J}z_j^N$.  Since
$N=n+m$ and
\[
 m(N+1)-mn=m(m+1)
\]
is even, the resulting sign is $(-1)^{mn}$.  Splitting
$z_j^N=z_j^m z_j^n$ yields \eqref{eq:widom-from-circulant}, which is
\eqref{eq:widomform} with the coefficient formula
\eqref{eq:widom-coefficient-balanced}.
\end{proof}

\begin{remark}[Open and cyclic determinant formulas]\label{rem:open-cyclic-bridge}
Proposition~\ref{prop:toeplitz-circulant-correction} separates the passage
from cyclic to open boundary conditions into a Fourier-diagonal circulant
factor and a correction determinant of fixed order $m$.  The correction
matrix itself depends on $N=n+m$ through the inverse-circulant coefficients
\eqref{eq:circulant-inverse-coefficients}; only its order is independent of
$n$.  Corollary~\ref{cor:circulant-recovers-widom} shows that expanding this
fixed-order correction by Cauchy--Binet selects the middle-cardinality subsets
and converts the full cyclic product into the open Widom sum.  Thus the
fixed-cardinality spectrum of the open Toeplitz problem and the full Boolean
spectrum of the cyclic problem are linked by an exact fixed-order correction,
not merely by an analogy of root products.  In this sense the construction is
a circulant-completion counterpart of the classical Baxter--Schmidt route to
Widom's formula \cite{BaxterSchmidt1961,BottcherGrudsky2005}, rather than a
replacement for that fixed-size determinant theory.
\end{remark}

\begin{theorem}[Symmetric cyclic determinant reduction]\label{thm:symmetric-cyclic}
Let $m_1=m_2=m$ and $a_{-j}=a_j$.  Generically write the roots of
$q(z)=z^ma(z)$ as
\[
 x_1,x_1^{-1},\ldots,x_m,x_m^{-1}.
\]
For $n>2m$, the corresponding circulant satisfies
\begin{equation}\label{eq:sym-cyclic-product}
 \det C_n=(-1)^{m(n+1)}a_m^n
 \prod_{i=1}^m\bigl(2-x_i^n-x_i^{-n}\bigr).
\end{equation}
Equivalently, with $s(\eps)$ as in \eqref{eq:ternary-support-size},
\begin{equation}\label{eq:sym-cyclic-expansion}
 \det C_n=(-1)^m
 \sum_{\eps\in\{-1,0,1\}^m}
 2^{m-s(\eps)}(-1)^{s(\eps)}
 \left(
  (-1)^ma_m\prod_{i=1}^m x_i^{\eps_i}
 \right)^n.
\end{equation}
Hence the symmetric cyclic determinant sequence has an annihilator of degree
at most $3^m$, and its minimal recurrence degree is generically exactly
\begin{equation}\label{eq:sym-cyclic-order}
 3^m.
\end{equation}
\end{theorem}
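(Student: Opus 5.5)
Specialize the recalled cyclic product \eqref{eq:cyclic-det-product-recalled} to the palindromic root list. For each reciprocal pair the two factors combine as
\[
 (1-x_i^n)(1-x_i^{-n})=2-x_i^n-x_i^{-n},
\]
which gives \eqref{eq:sym-cyclic-product} directly. The identity is polynomial in the symmetric functions of the roots, hence in the parameters, so it extends from the generic simple-root locus to all symmetric specializations.

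For \eqref{eq:sym-cyclic-expansion}, expand the product over $i$. Each factor is a sum of three terms, $2\cdot x_i^{0\cdot n}-x_i^{n}-x_i^{-n}$, indexed by $\eps_i\in\{0,1,-1\}$. A choice $\eps\in\{-1,0,1\}^m$ therefore contributes $2^{m-s(\eps)}(-1)^{s(\eps)}\prod_i x_i^{\eps_i n}$. Absorb $a_m^n$ and rewrite the sign: $(-1)^{m(n+1)}=(-1)^m\bigl((-1)^m\bigr)^n$. This gives exactly the mode $\bigl((-1)^ma_m\prod_i x_i^{\eps_i}\bigr)^n$ with overall prefactor $(-1)^m$.

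Since there are at most $3^m$ distinct mode values $\lambda_\eps=(-1)^ma_m\prod_i x_i^{\eps_i}$, the polynomial $\prod_\eps(t-\lambda_\eps)$ annihilates the sequence. Its coefficients are symmetric under the hyperoctahedral group acting on the pairs, so they are polynomials in the Toeplitz parameters. This gives the degree bound $3^m$ on all of parameter space, since specializations only lower degree. One caveat is that the formula is asserted for $n>2m$. I would handle this by noting that the recurrence statement concerns the sequence for $n>2m$, or that the expansion simply defines the tail, so the annihilator applies to the tail.

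For generic sharpness, mirror Theorem~\ref{thm:sharp}. Take $x_i=t^{3^{i-1}}$ with $t>1$ and $q(z)=\prod_i(z-x_i)(z-x_i^{-1})$, so $a_m=1$. Balanced-ternary uniqueness makes the $3^m$ values $\lambda_\eps$ pairwise distinct. Here, unlike the open case, there is no grouping at all, so every coefficient is $(-1)^m2^{m-s}(-1)^s\ne0$ (characteristic zero). Lemma~\ref{lem:hankel-vandermonde}, applied to the shifted sequence $u_k=\det C_{2m+1+k}$, whose coefficients are multiplied by the nonzero factors $\lambda_\eps^{2m+1}$, gives a nonzero Hankel determinant of order $3^m$ at this point. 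That Hankel determinant is a polynomial in the diagonal parameters, so it is nonzero on a nonempty Zariski-open set, where the minimal degree is exactly $3^m$.

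The only delicate point is that the shift to $n>2m$ keeps all coefficients nonzero, which holds because $\lambda_\eps\ne0$ when $a_m\ne0$ and $x_i\ne0$.
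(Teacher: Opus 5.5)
Your proof is correct and follows the paper's argument. You pair reciprocal roots in the recalled cyclic product, expand into the $3^m$ ternary modes with nonzero amplitudes $2^{m-s(\eps)}(-1)^{s(\eps)}$, and certify generic sharpness via the balanced-ternary specialization $x_i=t^{3^{i-1}}$ together with Lemma~\ref{lem:hankel-vandermonde}. Your explicit treatment of the $n>2m$ restriction, by shifting the sequence so that each amplitude is rescaled by the nonzero factor $\lambda_\eps^{2m+1}$, adds a small piece of care that the paper leaves implicit.
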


\begin{proof}
Apply the cyclic determinant formula \eqref{eq:cyclic-det-product-recalled} from the companion paper and pair reciprocal roots:
\[
 (1-x_i^n)(1-x_i^{-n})=2-x_i^n-x_i^{-n}.
\]
Expanding the product proves \eqref{eq:sym-cyclic-expansion}.  Each reciprocal
pair now contributes one of the three exponents $-1,0,1$; the zero exponent
has coefficient $2$, while the two nonzero exponents have coefficient $-1$.
Thus all $3^m$ displayed amplitudes are nonzero.

The corresponding characteristic bases are
\[
 \lambda_{\eps}=(-1)^ma_m\prod_{i=1}^m x_i^{\eps_i},
 \qquad \eps\in\{-1,0,1\}^m.
\]
They are pairwise distinct on a nonempty Zariski-open set.  For example, take
$x_i=t^{3^{i-1}}$ with $t>1$; uniqueness of balanced ternary expansion makes
all $\lambda_{\eps}$ distinct, and
$\prod_i(z-x_i)(z-x_i^{-1})$ is palindromic, hence comes from a symmetric
Toeplitz symbol.  Lemma~\ref{lem:hankel-vandermonde} then gives a nonzero
Hankel determinant of size $3^m$ at this specialization.  Since that Hankel
determinant is polynomial in the symmetric diagonal parameters, its
nonvanishing defines a nonempty Zariski-open set on which the degree is
exactly $3^m$.
\end{proof}

For completeness, the skew-symmetric balanced cyclic case degenerates
further.  If $a_0=0$ and $a_{-j}=-a_j$, then for $n>2m$
\[
 C_n=\sum_{j=1}^m a_j(\Pi_n^j-\Pi_n^{-j}),
 \qquad C_n\mathbf 1=0,
\]
so every such circulant determinant is identically zero.

Together with Theorems~\ref{thm:symmetric} and \ref{thm:skew},
Corollary~\ref{cor:skew-even} and the general cyclic determinant result of \cite{AlekseyevKhomovskyRecurrences2026}, this gives
the compact determinant comparison
\[
\begin{array}{c|ccc}
 & \text{general balanced} & \text{symmetric} & \text{skew-symmetric}\\ \hline
 \text{open Toeplitz}
 & \binom{2m}{m} & (3^m+1)/2 & 2\cdot3^{m-1}\\[3pt]
 \text{circulant}
 & 4^m & 3^m & \text{identically }0
\end{array}
\]
for the generic minimal recurrence degrees in the nonzero determinant
families, with the lower-right entry recording the stronger zero statement.
For the skew-symmetric open family, restricting to even matrix sizes further
reduces the generic minimal degree from $2\cdot3^{m-1}$ to $3^{m-1}$ by
Corollary~\ref{cor:skew-even}.  In the open symmetric case the
fixed-cardinality condition selects one parity layer of exponent vectors;
cyclic closure admits all exponent vectors in $\{-1,0,1\}^m$.
The permanent side admits rigorous symmetry-reduced annihilator bounds as
well, but their generic minimality is a separate question.  We now derive the
cyclic bounds by collecting the wrap-around entries into a fixed corner defect
and reducing each defect sector to an ordinary shifted Toeplitz band.

For permanents it is useful to avoid introducing a separate periodic
cycle-cover transfer.  Instead we reduce cyclic closure to the ordinary
banded-matrix transfers of the companion paper by collecting the wrap-around
entries into a fixed corner defect.  This also makes transparent why the
binomial coefficients appearing below are the same state counts as for
ordinary shifted Toeplitz bands.

\begin{lemma}[Defect decomposition of a banded circulant]
\label{lem:cyclic-defect-decomposition}
Let
\[
 P_n^{\mathrm{cyc}}=\perm C_n,\qquad n>2m.
\]
There are sequences $P_{n,r}^{\mathrm{cyc}}$, $0\le r\le2m$, such that
\begin{equation}\label{eq:cyclic-defect-sum}
 P_n^{\mathrm{cyc}}=\sum_{r=0}^{2m}P_{n,r}^{\mathrm{cyc}},
\end{equation}
and, for each fixed $r$, the sequence $P_{n,r}^{\mathrm{cyc}}$ has a
homogeneous constant-coefficient annihilator of degree at most
\begin{equation}\label{eq:cyclic-defect-degree}
 \binom{2m}{r}.
\end{equation}
More precisely, after a cyclic column shift and resolution of the resulting
corner defect, every summand contributing to $P_{n,r}^{\mathrm{cyc}}$ has a
clean central band with lower and upper semibandwidths
\[
 (2m-r,r).
\]
Transposition interchanges the sectors $r$ and $2m-r$.  Hence under symmetric
weights
\begin{equation}\label{eq:cyclic-defect-sym-pair}
 P_{n,2m-r}^{\mathrm{cyc}}=P_{n,r}^{\mathrm{cyc}},
\end{equation}
whereas under $a_0=0$ and $a_{-s}=-a_s$,
\begin{equation}\label{eq:cyclic-defect-skew-pair}
 P_{n,2m-r}^{\mathrm{cyc}}=(-1)^nP_{n,r}^{\mathrm{cyc}}.
\end{equation}
\end{lemma}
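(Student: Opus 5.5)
Apply a cyclic column shift by $m$: replace $C_n$ by $C_n\Pi_n^{m}$, or equivalently relabel columns via $j\mapsto j-m \pmod n$. This leaves the permanent unchanged, since permanents are invariant under column permutations. After the shift, every entry with offset $s\in\{-m,\dots,m\}$ lands on a nonnegative offset $s+m\in\{0,\dots,2m\}$. The result is a one-sided band: lower semibandwidth $0$, upper semibandwidth $2m$, except that the wrap-around entries are collected in a fixed $2m\times 2m$ lower-left corner triangle. Because $n>2m$, this corner defect is disjoint from the clean band.

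To resolve the defect, classify each permutation $\pi$ in the permanent expansion by its \emph{flux} $r$. The flux is the number of rows $i$ whose image uses a wrap-around entry, equivalently the net number of times the displacement $\pi(i)-i$ crosses the seam. A counting argument (sum of displacements is zero modulo $n$, each wrap entry carries displacement about $-n$) shows the flux is a well-defined integer between $0$ and $2m$. Set $P^{\mathrm{cyc}}_{n,r}$ to be the sum of the terms with flux $r$. This gives \eqref{eq:cyclic-defect-sum}.

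For fixed $r$, expand along the defect block. The corner rows and columns involved are chosen from the boundary windows of size $2m$, and the choices depend only on the finitely many corner configurations, not on $n$. Removing the chosen wrap-around rows and columns leaves a boundary-minor permanent of an ordinary banded Toeplitz section. Flux $r$ shifts the effective diagonal by $r$, so the section has lower and upper semibandwidths $(2m-r,r)$ and the same diagonal parameters relabelled. By the companion paper's state-count theorem for unbalanced $(m_1,m_2)$ bands, which generalizes Proposition~\ref{prop:state-recall} with level-$j$ counts $\binom{m_1}{j}\binom{m_2}{j}$ summing to $\binom{m_1+m_2}{m_1}$, such boundary permanents form coordinates of a transfer of dimension $\binom{2m}{r}$. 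The flux-$r$ summand is then a fixed linear functional, independent of $n$, of the $n$-dependent transfer state, with finitely many fixed corner cofactors as coefficients. Cayley--Hamilton on the transfer therefore gives an annihilator of degree at most $\binom{2m}{r}$, which is \eqref{eq:cyclic-defect-degree}. \textbf{This is the main obstacle}: checking that the fixed-corner contributions really are $n$-independent linear combinations of a single $\binom{2m}{r}$-dimensional transfer state. I would verify it by expressing the section in each flux sector as a block-bordered band whose borders have fixed width $\le 2m$ and invoking the boundary-state closure of the companion construction.

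For the symmetry relations, transposition sends $C_n$ to the circulant with symbol $a(z^{-1})$ and sends $\pi$ to $\pi^{-1}$, which negates every displacement. After re-shifting, negated displacement turns flux $r$ into flux $2m-r$, so $\perm$ restricted to sector $r$ of $C_n^{\mathsf T}$ equals sector $2m-r$ of $C_n$. Under $a_{-s}=a_s$ we have $C_n^{\mathsf T}=C_n$, giving \eqref{eq:cyclic-defect-sym-pair}. Under $a_0=0$ and $a_{-s}=-a_s$ we have $C_n^{\mathsf T}=-C_n$, and each term picks up $(-1)^n$, giving \eqref{eq:cyclic-defect-skew-pair}.
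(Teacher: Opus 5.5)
Your proposal is correct and is essentially the paper's proof: a cyclic column shift, sectors indexed by the number of wrap-around entries a permutation uses, each sector written as finitely many boundary-distorted clean-band permanents governed by the companion transfer of dimension $\binom{2m}{r}$, and sector pairing via $\pi\mapsto\pi^{-1}$ under transposition. The step you flag as the main obstacle is exactly where the paper also simply invokes the row--column construction with placement-dependent boundary vectors, and the paper's moving of the $r$ defective columns to the front, instead of deleting the wrap rows and columns, is only a repackaging.

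One bookkeeping slip: with your shift to offsets $[0,2m]$, deleting the $r$ wrap rows (at the bottom) and columns (at the left) leaves central offsets $[-r,2m-r]$, i.e.\ lower and upper semibandwidths $(r,2m-r)$. So your flux-$r$ sector is the paper's sector $2m-r$. To match the stated indexing, either label by $2m$ minus the flux or shift toward $[-2m,0]$ as the paper does; nothing else changes, since $\binom{2m}{r}=\binom{2m}{2m-r}$ and the pairing $r\leftrightarrow 2m-r$ is symmetric.
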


\begin{proof}
Cyclically shift the columns by $m$ positions in the direction that sends the
bulk offsets $[-m,m]$ to $[-2m,0]$.  For the permanent this column
permutation does not change the value.  Away from the boundary the shifted
matrix is therefore a clean $(2m,0)$-banded Toeplitz matrix; all wrap-around
entries are confined to a fixed northeast corner involving only the last
$2m$ columns and a bounded number of initial rows.

Partition the permanent expansion according to the boundary placement of
these $2m$ defective columns.  For a fixed nonzero placement, move to the
front the $r$ defective columns whose chosen entries lie in the upper defect,
and suppress the defect entries incompatible with that placement.  This does
not change any compatible permutation term, and the placement classes are
disjoint and exhaustive.  In the central part of the resulting matrix the
movement of $r$ columns raises the clean band by $r$ positions, changing its
semibandwidths as
\[
 (2m,0)\longmapsto(2m-r,r).
\]
Only bounded-size leading and trailing corner data remain distorted.  The
number of boundary placements depends on $m$ but not on $n$.

For each fixed placement, the row--column construction of the companion
paper \cite{AlekseyevKhomovskyRecurrences2026} therefore represents the
resulting sequence as a matrix coefficient of the ordinary
$(2m-r,r)$ clean-band transfer between placement-dependent boundary vectors.
Its state dimension is
\[
 \binom{(2m-r)+r}{r}=\binom{2m}{r}.
\]
All placements with the same $r$ use this same central transfer, so their sum
is annihilated by its characteristic polynomial.  Summing over $r$ proves
\eqref{eq:cyclic-defect-sum} and \eqref{eq:cyclic-defect-degree}.

Transposition reverses the direction of every wrap-around displacement, so it
sends a placement with $r$ raised columns to the complementary placement with
$2m-r$ raised columns.  Symmetric weights preserve every monomial, giving
\eqref{eq:cyclic-defect-sym-pair}.  Under skew weights every nonzero term uses
$n$ off-diagonal entries and edge reversal changes each weight by a factor
$-1$, giving \eqref{eq:cyclic-defect-skew-pair}.
\end{proof}

\begin{remark}[Relation with winding sectors]\label{rem:defect-winding}
The shifted-band index is the same invariant that appears as winding in a
moving-cut description.  With
\[
 \delta=r-m,
\]
the clean-band state count becomes
\[
 \binom{2m}{r}=\binom{2m}{m+\delta}.
\]
Thus the defect decomposition and the periodic winding decomposition are two
coordinate descriptions of the same sector splitting.  We use the defect
version because it reduces the recurrence bounds directly to the ordinary
banded Toeplitz state counts already developed in the companion paper.
\end{remark}

\begin{theorem}[Symmetric cyclic permanent bound]\label{thm:per-sym-cyclic}
Assume $a_{-s}=a_s$ for $0\le s\le m$.  Then the stable symmetric
$(m,m)$-circulant permanent sequence has a homogeneous constant-coefficient
annihilator of degree at most
\begin{equation}\label{eq:per-sym-cyclic}
 d_{\mathrm{cyc}}^{\mathrm{per},+}
 =\frac12\left(4^m+\binom{2m}{m}\right).
\end{equation}
\end{theorem}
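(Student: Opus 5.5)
The plan is to start from the defect decomposition of Lemma~\ref{lem:cyclic-defect-decomposition}, namely
\[
 P_n^{\mathrm{cyc}}=\sum_{r=0}^{2m}P_{n,r}^{\mathrm{cyc}}.
\]
Under symmetric weights the pairing \eqref{eq:cyclic-defect-sym-pair} lets us rewrite this as
\[
 P_n^{\mathrm{cyc}}
 =2\sum_{r=0}^{m-1}P_{n,r}^{\mathrm{cyc}}+P_{n,m}^{\mathrm{cyc}}.
\]
The sequence $P_{n,r}^{\mathrm{cyc}}$ is annihilated by a polynomial $\chi_r$ of degree at most $\binom{2m}{r}$, the characteristic polynomial of the clean $(2m-r,r)$ transfer. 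Hence the first sum is annihilated by $\prod_{r<m}\chi_r$, of degree $\sum_{r<m}\binom{2m}{r}=\tfrac12\bigl(4^m-\binom{2m}{m}\bigr)$. The middle sector contributes degree $\binom{2m}{m}$. A naive lcm of all the pieces therefore gives
\[
 \tfrac12\Bigl(4^m-\binom{2m}{m}\Bigr)+\binom{2m}{m}
 =\tfrac12\Bigl(4^m+\binom{2m}{m}\Bigr),
\]
which is exactly \eqref{eq:per-sym-cyclic}. Note that annihilators multiply over sums: if $\chi$ kills $u$ and $\psi$ kills $v$, then $\chi\psi$ kills $u+v$.

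The main point to check is that the pairing does remove the sectors $r>m$, and not merely equate their values. This is automatic: once $P_{n,2m-r}^{\mathrm{cyc}}=P_{n,r}^{\mathrm{cyc}}$ for all large $n$, the sector $2m-r$ contributes no new sequence, so no separate annihilator factor is needed for it. I will also note that the degree $\binom{2m}{r}$ bound comes from a single central transfer per sector. Summing over placements within a sector is still annihilated by that one characteristic polynomial, by the lemma. The bounded leading and trailing corner data only affect the boundary vectors, and perhaps a finite number of initial terms.

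The likely obstacle is exactly this ``stable'' caveat. The defect lemma is stated for $n>2m$, and the boundary-vector representation $u^{\mathsf T}Q^{n-c}v$ holds only after a fixed offset. I would therefore phrase the conclusion as a recurrence valid for all sufficiently large $n$, matching the word ``stable'' in the statement. If one wants it from the first admissible index, I would pad each sector's representation, using the fact that $Q^{n-c}$ with a fixed shift $c$ still satisfies the same characteristic relation. A secondary check is that symmetry should be imposed with $a_0$ free; the hypothesis $0\le s\le m$ is trivial at $s=0$. Transposition sends $C_n$ to the circulant of $a(z^{-1})$, which equals $C_n$ itself, and the lemma then supplies the sector pairing.

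Finally, I would remark that for a single sector the middle count $\binom{2m}{m}$ could be further reduced to the transposition-orbit count $\tfrac12(\binom{2m}{m}+2^m)$, via Lemma~\ref{lem:signature-transpose}, if the middle sector is self-transpose. I will not claim this, since the stated bound does not need it.
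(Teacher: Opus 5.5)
Your proposal is correct and matches the paper's proof essentially step for step. The paper likewise uses Lemma~\ref{lem:cyclic-defect-decomposition} and the symmetric sector pairing to write $P_n^{\mathrm{cyc}}=P_{n,m}^{\mathrm{cyc}}+2\sum_{r=0}^{m-1}P_{n,r}^{\mathrm{cyc}}$, and then multiplies the sector annihilators of degree at most $\binom{2m}{r}$ for $0\le r\le m$ to obtain $\sum_{r=0}^{m}\binom{2m}{r}=\frac12\bigl(4^m+\binom{2m}{m}\bigr)$. Your extra remarks on the ``stable'' index offset go slightly beyond what the paper spells out, but they are consistent with it.
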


\begin{proof}
By Lemma~\ref{lem:cyclic-defect-decomposition} and symmetry,
\[
 P_n^{\mathrm{cyc}}
 =P_{n,m}^{\mathrm{cyc}}
  +2\sum_{r=0}^{m-1}P_{n,r}^{\mathrm{cyc}}.
\]
For fixed $r$, the corresponding sequence has an annihilator of degree at
most $\binom{2m}{r}$.  Multiplying these annihilators for $0\le r\le m$
therefore gives an annihilator for the displayed sum of degree at most
\[
 \sum_{r=0}^{m}\binom{2m}{r}
 =\frac12\left(4^m+\binom{2m}{m}\right),
\]
as claimed.
\end{proof}

\begin{proposition}[Skew-symmetric cyclic permanents]\label{prop:per-skew-cyclic}
Assume characteristic different from $2$ and impose $a_0=0$ and
$a_{-s}=-a_s$.  Then every odd-order skew-symmetric circulant permanent
vanishes.  The even-size stable subsequence has a homogeneous
constant-coefficient annihilator of degree at most
\begin{equation}\label{eq:per-skew-cyclic-even}
 2^{2m-1}.
\end{equation}
\end{proposition}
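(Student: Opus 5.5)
Odd-order vanishing comes first and is immediate.  For $n>2m$ the circulant $C_n=\sum_{j=1}^m a_j(\Pi_n^j-\Pi_n^{-j})$ satisfies $C_n^{\mathsf T}=-C_n$.  Hence $\perm C_n=\perm C_n^{\mathsf T}=\perm(-C_n)=(-1)^n\perm C_n$, and in characteristic different from $2$ this forces $P_n^{\mathrm{cyc}}=0$ for odd $n$.  This is the same transpose/sign argument as in Proposition~\ref{prop:per-skew-open}.

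For the even subsequence I use the defect decomposition of Lemma~\ref{lem:cyclic-defect-decomposition}.  Under skew weights it gives $P_{n,2m-r}^{\mathrm{cyc}}=(-1)^nP_{n,r}^{\mathrm{cyc}}$.  Restricting to even $n=2k$, the sectors pair exactly as in the symmetric case:
\[
 P_{2k}^{\mathrm{cyc}}=P_{2k,m}^{\mathrm{cyc}}+2\sum_{r=0}^{m-1}P_{2k,r}^{\mathrm{cyc}}.
\]
Each sector sequence $(P_{n,r}^{\mathrm{cyc}})_n$ has an annihilator of degree at most $\binom{2m}{r}$.  Multiplying these gives an annihilator of degree at most $\sum_{r=0}^{m}\binom{2m}{r}=\tfrac12(4^m+\binom{2m}{m})$ for the full sequence.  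That is not yet $2^{2m-1}$, and passing to even indices does not by itself halve the degree, so two further savings are needed.

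\textbf{Saving in the middle sector.}  The sector $r=m$ carries the balanced $(m,m)$ central band, and under skew weights the transposition pairing maps it to itself.  I would argue as in Proposition~\ref{prop:per-skew-open}.  First, the middle sector is itself odd-vanishing: for odd $n$ we get $P_{n,m}^{\mathrm{cyc}}=-P_{n,m}^{\mathrm{cyc}}=0$.  Its rational generating function $N(z)/D(z)$, with $\deg D\le\binom{2m}{m}$, is then even.  Coprimeness and $D(0)=1$ force $D(z)=R(z^2)$ with $\deg R\le\frac12\binom{2m}{m}$, so the even subsequence of the middle sector has degree at most $\frac12\binom{2m}{m}$.

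\textbf{Saving in the paired sectors.}  For $r<m$ the pairing does not make $P_{n,r}^{\mathrm{cyc}}$ itself odd-vanishing, so I pair sectors instead.  The combination $P_{n,r}^{\mathrm{cyc}}+P_{n,2m-r}^{\mathrm{cyc}}$ vanishes for odd $n$ by the skew relation.  I would show its generating function has denominator dividing a product $D_r(z)D_{2m-r}(z)$ that is invariant under $z\mapsto -z$.  The argument is that the $(2m-r,r)$ and $(r,2m-r)$ clean-band transfers are conjugate under transposition composed with the sign flip $a_s\mapsto-a_s$.  Since the flip is induced by $\operatorname{diag}(\pm1)$ gauge on levels, this should give $D_{2m-r}(z)=D_r(-z)$ up to normalization.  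Then $D_r(z)D_r(-z)=R_r(z^2)$ with $\deg R_r=\binom{2m}{r}$, which annihilates the even subsequence of the paired sum.  This step is the main obstacle: one must verify that skew transposition conjugates the two shifted-band transfers with the spectrum negated, i.e. $\lambda\mapsto-\lambda$, rather than merely preserving it.  I would check it through the characteristic polynomial: the relevant transfer eigenvalues are fixed-cardinality root products of the shifted symbol, and the skew symmetry $z^{2m}q(1/z)=-q(z)$ sends $r$-subsets to complementary $(2m-r)$-subsets with an overall sign determined by the product of all roots, namely $-1$.

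\textbf{Total.}  Summing gives
\[
 \tfrac12\binom{2m}{m}+\sum_{r=0}^{m-1}\binom{2m}{r}=\tfrac12\cdot 4^m=2^{2m-1},
\]
as claimed.
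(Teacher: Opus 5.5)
Your odd-order vanishing matches the paper's result, and your route to it, directly from $C_n^{\mathsf T}=-C_n$, is a little more direct than the paper's, which reads it off the sector relation \eqref{eq:cyclic-defect-skew-pair}. The even-index collapse $P_{2k}^{\mathrm{cyc}}=P_{2k,m}^{\mathrm{cyc}}+2\sum_{r<m}P_{2k,r}^{\mathrm{cyc}}$ and the parity/coprimeness halving of the middle sector are exactly the paper's argument.

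The problem is your bookkeeping. The middle-sector saving alone already finishes the proof: replacing the term $\binom{2m}{m}$ by $\frac12\binom{2m}{m}$ in $\sum_{r=0}^m\binom{2m}{r}=\frac12\bigl(4^m+\binom{2m}{m}\bigr)$ leaves exactly $2^{2m-1}$. Nothing needs to be saved for $r<m$, because you already used the pairing of $r$ with $2m-r$ when you wrote the even-index collapse. For $r<m$, the even subsequence of $P_{n,r}^{\mathrm{cyc}}$ inherits an annihilator of degree at most $\binom{2m}{r}$, since passing from $n$ to $2k$ squares the transfer (each characteristic root $\lambda$ becomes $\lambda^2$). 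That the degree is not halved is harmless; what matters is that it does not increase, and this is exactly what the paper uses for $r<m$. A small precision: the product of the sector annihilators for $0\le r\le m$ annihilates $n\mapsto P_{n,m}^{\mathrm{cyc}}+2\sum_{r<m}P_{n,r}^{\mathrm{cyc}}$, not $P_n^{\mathrm{cyc}}$ at odd $n$. Only its even subsequence, which is $P_{2k}^{\mathrm{cyc}}$, is needed.

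So the step you call the main obstacle is not needed. Moreover, the way you propose to settle it would not work. The spectrum of a clean-band \emph{permanent} transfer is not given by fixed-cardinality root products of the symbol. That is Widom's description of the determinant transfer, so invoking $z^{2m}q(1/z)=-q(z)$ and complementary subsets says nothing about permanents.

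If you still want the paired statement, it follows at once from \eqref{eq:cyclic-defect-skew-pair}. The relation $P_{n,2m-r}^{\mathrm{cyc}}=(-1)^nP_{n,r}^{\mathrm{cyc}}$ means the sector generating functions satisfy $G_{2m-r}(z)=G_r(-z)$. Hence, in your notation, the reduced denominators satisfy $D_{2m-r}(z)=D_r(-z)$, with no transfer conjugation required.

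Replace your paired-sector paragraph by the one-line squaring observation (or by this identity), and the proof is complete and coincides with the paper's.
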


\begin{proof}
By \eqref{eq:cyclic-defect-skew-pair}, the sectors $r$ and $2m-r$ cancel
when $n$ is odd.  The central sector $r=m$ satisfies the same identity with
itself and therefore also vanishes at odd $n$.  Hence
$P_{2k+1}^{\mathrm{cyc}}=0$.

For even sizes,
\[
 P_{2k}^{\mathrm{cyc}}
 =P_{2k,m}^{\mathrm{cyc}}
  +2\sum_{r=0}^{m-1}P_{2k,r}^{\mathrm{cyc}}.
\]
For $r<m$, passing from $n$ to $2k$ replaces the clean-band transfer by its
square and does not increase its dimension, so the $r$th contribution has an
annihilator of degree at most $\binom{2m}{r}$.  The central sequence
$P_{n,m}^{\mathrm{cyc}}$ has an annihilator of degree at most
$\binom{2m}{m}$ and vanishes at every odd index.  After reducing its rational
generating function to coprime numerator and denominator, the same parity
argument as in Proposition~\ref{prop:per-skew-open} makes the denominator
even.  Thus its even subsequence has an annihilator of degree at most
$\frac12\binom{2m}{m}$.

Multiplying the annihilators of the independent sector contributions gives
\[
 \sum_{r=0}^{m-1}\binom{2m}{r}
 +\frac12\binom{2m}{m}
 =\frac{4^m}{2}=2^{2m-1},
\]
which proves the bound.
\end{proof}

The proved permanent annihilator bounds can be summarized without asserting
all-$m$ minimality:
\[
\begin{array}{c|cc}
 & \begin{array}{c}\text{symmetric}\\[-2pt]\text{full sequence}\end{array}
 & \begin{array}{c}\text{skew-symmetric}\\[-2pt]\text{even subsequence}\end{array}\\ \hline
 \text{open Toeplitz}
 & \dfrac{\binom{2m}{m}+2^m}{2}
 & \dfrac12\binom{2m}{m}\\[7pt]
 \text{circulant}
 & \dfrac{4^m+\binom{2m}{m}}{2}
 & 2^{2m-1}
\end{array}
\]
The two symmetric bounds are natural candidates for generic sharpness.  Their
minimality in higher semibandwidth is a separate problem and is not pursued
here.  The skew column records only the proved even-subsequence upper bounds.

\section{Concluding remarks}\label{sec:conclusion}

The preceding companion paper \cite{AlekseyevKhomovskyRecurrences2026}
isolates the universal recurrence mechanism for fixed-band Toeplitz
determinants and permanents.  The present follow-up shows that additional
structure acts in three different ways: reciprocal-root symmetry collapses
determinant modes, exceptional support patterns convert certain permanents
exactly into determinants, and involutions of permanent state spaces reduce
the observable transfer quotient.  Keeping these mechanisms separate makes
clear which recurrence reductions are spectral, which are exact identities,
and which come from finite-state symmetry.

For determinants the root-product picture remains the most direct spectral
route, but both open symmetry classes now have parallel constructive transfer
explanations.  In the symmetric family, determinant straightening reduces the
boundary-minor state space to the primitive symplectic module of Catalan
dimension $C_{m+1}$, whose weight multiplicities collapse for a fixed
autonomous transfer to $(3^m+1)/2$ distinct eigenvalues.  In the skew family,
fixed-ambient complementary minors obey transpose-sign relations whose
half-binomial count mirrors the canonical middle Hodge decomposition, but the
normalized row-column transfer does not realize that decomposition by a
levelwise transpose quotient.  In the compound realization the one-step
transfer exchanges the two Hodge halves, and the two-step transfer has
$3^{m-1}$ generic ternary modes, yielding the full $2\cdot3^{m-1}$ bound.
Widom gives the same modes directly, and the
Hankel--Vandermonde argument proves that the principal determinant coordinate
generically sees all of them.  A minimal circulant completion provides a
complementary bridge between the open and cyclic determinant problems: an
$n\times n$ Toeplitz determinant equals the determinant of an
$(n+m)\times(n+m)$ circulant times an $m\times m$ inverse-block correction,
and expanding that correction gives a circulant-completion counterpart to
the classical Baxter--Schmidt derivation of the Widom formula.  The skew
Toeplitz factorization also provides a direct matrix explanation of the square
structure at even size.  Under cyclic closure the full Boolean subset
spectrum reduces from generic degree $4^m$ to $3^m$ in the symmetric balanced
family, while a skew-symmetric circulant determinant vanishes identically.

Exact permanent--determinant conversion is instead a low-semibandwidth
phenomenon.  The zero-diagonal pentadiagonal support admits a fixed
P\'olya--Kasteleyn converter, and on the Toeplitz locus its real signing is
gauge-equivalent to the corresponding fourth-root-of-unity phase rule.  The
paired fixed-point renewal identities reconstruct every restored-main-diagonal
layer of both the permanent and determinant from the corresponding
zero-diagonal sequence.  At the full coordinate-support level, a
six-matching obstruction at order six shows that arbitrary nonzero entrywise
multipliers cannot convert the $(3,2)$ band; specialization then excludes all
larger consecutive two-sided supports.  Thus the support-level classification
leaves only the Hessenberg families and the pentadiagonal exception.  The
separate diagonal-wise classification on the Toeplitz locus yields exactly
the same list.

For permanents in arbitrary balanced semibandwidth, transposition gives the
open symmetric quotient.  Under cyclic closure, a column shift collects the
wrap-around entries into a fixed corner defect; resolving that defect produces
shifted clean bands $(2m-r,r)$, and transposition pairs the sectors $r$ and
$2m-r$.  This yields the open and cyclic symmetric bounds
\[
 \frac12\left(\binom{2m}{m}+2^m\right),
 \qquad
 \frac12\left(4^m+\binom{2m}{m}\right),
\]
while skew-symmetry forces odd-order vanishing and corresponding bounds for
the even subsequences.  These conclusions use only finite-state symmetries
and therefore hold uniformly in $m$.  The symmetric pentadiagonal open case
shows that the open quotient bound can be attained generically and also
exhibits an explicit one-step degree-drop locus.  Generic minimality of the
higher-semibandwidth permanent bounds is a separate problem and is not
required for the results established here.

Taken together, the results give a compact symmetry taxonomy for Toeplitz
recurrences.  For symmetric determinants, Laplace signs first impose universal
straightening relations among boundary minors,
\[
 \binom{2m}{m}\longrightarrow C_{m+1},
\]
and the primitive symplectic realization explains the second autonomous
compression
\[
 C_{m+1}\longrightarrow\frac{3^m+1}{2}
\]
as the passage from Catalan weight multiplicities to one eigenvalue per
admissible ternary weight.  For skew determinants, the canonical middle Hodge splitting in the
compound realization gives
\[
 \binom{2m}{m}\longrightarrow\frac12\binom{2m}{m},
\]
with fixed-ambient transpose parity yielding the same half-binomial coordinate
count; the two-step autonomous spectrum then compresses further to $3^{m-1}$
ternary weights, corresponding to $2\cdot3^{m-1}$ one-step modes.  Widom reaches the same final modes directly
and, through the Hankel criterion, proves scalar generic sharpness.
Exceptional low-width supports can identify permanent and determinant
sequences, while permanent symmetries act directly on boundary-state
quotients.  Thus state-space, representation-theoretic, and spectral
reductions are complementary rather than competing explanations.

\section*{Acknowledgements}
The authors used ChatGPT (OpenAI) as a research and writing aid for literature
searches, exploratory symbolic and computational work, code development and
checking, and language editing.  AI-assisted suggestions were treated as
provisional and checked against direct proofs, independent computations, or
cited sources before inclusion.  The authors take full responsibility for the
mathematical content of the article.

\printbibliography
\end{document}